%
%


\documentclass[final]{siamltex}

\usepackage{graphics,epsfig}
\usepackage{enumerate}
\usepackage{amsmath}
\usepackage{amssymb,amscd,mathrsfs}
\usepackage{pifont}

\allowdisplaybreaks

\newtheorem{assumption}		[theorem]{Assumption}
\newtheorem{claim}				[theorem]{Claim}

\newcommand{\ba}				{\begin{array}}
\newcommand{\ea}				{\end{array}}
\newcommand{\er}[1]			{{(\ref{#1})}}
\newcommand{\nn}				{{\nonumber}}

\newcommand{\ooer}[1]			{\normalfont\tag{\ref{#1}}}

\newcommand{\bo}				{{\mathcal{L}}}

\newcommand{\cN}				{{\mathscr{N}}}
\newcommand{\cNcom}			{{{\cN}^{\perp}}}
\newcommand{\cR}				{{\mathscr{R}}}
\newcommand{\cS}				{{\mathscr{S}}}
\newcommand{\cW}				{{\mathscr{W}}}
\newcommand{\cX}				{{\mathscr{X}}}
\newcommand{\cY}				{{\mathscr{Y}}}

\newcommand{\C}				{{\mathbb{C}}}

\newcommand{\ddtone}[2]		{{\frac{d {#1}}{d {#2}}}}
\newcommand{\demi}			{{\textstyle\frac{1}{2}}}
\newcommand{\dom}			{{\textsf{dom}}}
\newcommand{\eps}				{{\epsilon}}
\newcommand{\Frechet}			{{Fr\'{e}chet}}

\newcommand{\fundterm}			{{\widetilde\psi}}

\newcommand{\Ltwo}			{{\mathcal{L}_{2}}}

\newcommand{\mapsinto}		{\rightarrow}

\newcommand{\N}				{{\mathbb{N}}}
\newcommand{\ol}[1]			{{\overline{#1}}}
\newcommand{\op}[1]			{{\mathcal{#1}}}
\newcommand{\opdot}[1]			{{\dot{\op{#1}}}}
\newcommand{\ophat}[1]			{{\widehat{\op{{#1}}}}}
\newcommand{\opMtilde}			{{\widetilde{\op{M}}}}
\newcommand{\opPtilde}			{{\widetilde{\op{P}}}}
\newcommand{\opsqrt}[1]			{{\widehat{\op{{#1}}}}}
\newcommand{\optilde}[1]		{{\widetilde{\op{{#1}}}}}

\newcommand{\pdtone}[2]		{{\frac{\partial {#1}}{\partial {#2}}}}
\newcommand{\R}				{{\mathbb{R}}}
\newcommand{\Z}				{{\mathbb{Z}}}
\newcommand{\semiconcave}[1]	{{{\mathscr{S}}_{-}^{{#1}}}}
\newcommand{\semiconvex}[1]	{{{\mathscr{S}}^{{#1}}}}
\newcommand{\scf}[2]			{{\cS^{{#1}}\left({#2}\right)}}
\newcommand{\skx}				{{\scf{\op{K}}{\cX}}}

\newcommand{\ts}[1]			{{\textstyle{#1}}}

\newcommand{\opBmp}[1]		{{\op{B}_{{#1}}^{\oplus}}}

%

\newcommand{\opNtilde}		{{\widetilde{\op{N}}}}
\newcommand{\betaOOhat}[1]	{{\widehat{\op{B}}_{{#1}}^{1,1}}}
\newcommand{\betaOThat}[1]	{{\widehat{\op{B}}_{{#1}}^{1,2}}}
\newcommand{\betaTThat}[1]	{{\widehat{\op{B}}_{{#1}}^{2,2}}}
\newcommand{\betaOO}[1]	{{{\op{B}}_{{#1}}^{1,1}}}
\newcommand{\betaOT}[1]	{{{\op{B}}_{{#1}}^{1,2}}}
\newcommand{\betaTT}[1]	{{{\op{B}}_{{#1}}^{2,2}}}

\newcommand{\be}{\begin{equation}}
\newcommand{\ee}{\end{equation}}
\newcommand{\beasnum}{\begin{eqnarray}}
\newcommand{\eeasnum}{\end{eqnarray}}
\newcommand{\beas}{\begin{eqnarray*}}
\newcommand{\eeas}{\end{eqnarray*}}

\newtheorem{sideremark}         [theorem]{Remark}
\newenvironment{remark}         {\begin{sideremark}\rm}{\end{sideremark}}
%
%



\title{
A max-plus dual space fundamental solution for a class of operator differential Riccati equations\thanks{
This research is supported by grants 
from the Australian Research Council, AFOSR, and NSF. Preliminary results contributing to this paper appear in \cite{DM:11,DM:12,DM:14}.
}}

\author{Peter M. Dower\thanks{Department of Electrical \& Electronic Engineering, University of Melbourne, Victoria 3010, Australia.
{\tt pdower@unimelb.edu.au}, +61-3-8344-6711.}
\and William M. McEneaney\thanks{Department of Mechanical \& Aerospace Engineering, University of California at San Diego, 9500 Gilman Dr., La Jolla, CA 92093-0411, USA. {\tt wmceneaney@eng.ucsd.edu}, +1-858-822-5835.}
}

\begin{document}

\maketitle

\begin{abstract}
A new fundamental solution semigroup for operator differential Riccati equations is developed. This fundamental solution semigroup is constructed via an auxiliary finite horizon optimal control problem whose value functional growth with respect to time horizon is determined by a particular solution of the operator differential Riccati equation of interest. By exploiting semiconvexity of this value functional, and the attendant max-plus linearity and semigroup properties of the associated dynamic programming evolution operator, a semigroup of max-plus integral operators is constructed in a dual space defined via the Legendre-Fenchel transform. It is demonstrated that this semigroup of max-plus integral operators can be used to propagate all solutions of the operator differential Riccati equation that are initialized from a specified class of initial conditions. As this semigroup of max-plus integral operators can be identified with a semigroup of quadratic kernels, an explicit recipe for the aforementioned solution propagation is also rendered possible.
\end{abstract}

\begin{keywords}
Infinite dimensional systems, operator differential Riccati equations, fundamental solution, semigroups, max-plus methods, Lengendre-Fenchel transform, optimal control.
\end{keywords}

\begin{AMS}
49L20, 49M29, 15A80, 93C20, 47F05, 47D06.
\end{AMS}


\section{Introduction}
The objective of this paper is to develop a new fundamental solution semigroup for operator differential Riccati equations of the form
\begin{align}
	\dot{\op{P}}(t)
	& =  \op{P}(t)\, \op{A} + \op{A}' \, \op{P}(t)+ \op{P}(t) \, \sigma \, \sigma' \, \op{P}(t) + \op{C}\,,
	\label{eq:op-P}
\end{align}
where $\op{P}(t)$ is a self-adjoint bounded linear operator evolved to time $t$ from some initialization $\op{P}(0)$, $\op{A}$ is an unbounded,  densely defined and boundedly invertible linear operator that generates a $C_0$-semigroup of bounded linear operators, and $\sigma$ and $\op{C}$ are bounded linear operators, all defined with respect to a pair of underlying Hilbert spaces $\cX$ and $\cW$. Operator differential Riccati equations of this form arise naturally in the formulation and solution of optimal control problems for linear infinite dimensional systems \cite{BDDM:07,CZ:95}. Their solution is of particular interest where a state feedback characterization for an optimal control is sought. The fundamental solution semigroup obtained generalizes the finite dimensional case presented in \cite{M:08}, and the specific infinite dimensional cases documented in \cite{DM:11,DM:12} for mild solutions. It describes all solutions of \er{eq:op-P} corresponding to a class of quadratic terminal payoffs. Preliminary results in this direction also appear in \cite{DM:14}.

Development of the new fundamental solution semigroup for \er{eq:op-P} proceeds by considering an infinite dimensional optimal control problem on a finite time horizon $t$. This control problem is constructed such that the value functional obtained exhibits quadratic growth with respect to the state variable, where the growth is determined by the solution $\op{P}(t)$ of the operator differential Riccati equation \er{eq:op-P} at time $t$. Consequently, evolution of the solution $\op{P}(t)$ of \er{eq:op-P} with time $t$ can be identified with evolution of the value functional with respect to time horizon $t$, with dynamic programming \cite{BCD:97,B:57} providing a mechanism for the latter. As the value functional obtained is demonstrably semiconvex, its evolution via dynamic programming can be identified with a corresponding evolution in a dual space defined via the Legendre-Fenchel transform \cite{R:74}. Critically, by exploiting max-plus linearity of the dynamic programming evolution operator, this dual space evolution can be decoupled from the terminal payoff employed in the optimal control problem, and hence from the initial data that defines any specific solution $\op{P}$ of \er{eq:op-P}. Indeed, the set of time horizon indexed dual space evolution operators defined via this decoupling describes a {\em fundamental solution} to \er{eq:op-P}, as its elements can be used to propagate any initial data $\op{P}(0)$ within a specific class of operators to yield the corresponding solution $\op{P}(t)$ of \er{eq:op-P} at time $t\in\R_{\ge 0}$. As dynamic programming naturally endows the value functional with a semigroup property, this set of time horizon indexed dual space evolution operators also defines a semigroup that can be regarded as the {\em max-plus dual space fundamental solution semigroup} for \er{eq:op-P}. 

In terms of organization, the operator differential Riccati equation of interest is posed in Section \ref{sec:Riccati}, along with results concerning existence and uniqueness of its solution on a finite time horizon. Construction of the max-plus fundamental solution semigroup is presented in detail in Section \ref{sec:fund}, with the steps involved in applying this fundamental solution semigroup to evaluate solutions of \er{eq:op-P} enumerated in Section \ref{sec:solve}. This is followed by some brief conclusions in Section \ref{sec:conc}, and appendices that include (for completeness) pertinent well-known details concerning continuity of operator-valued functions, the Yosida approximation, and so on.




\section{Operator differential Riccati equation}
\label{sec:Riccati}
Attention is initially restricted to the operator differential Riccati equation \er{eq:op-P} of interest, with sufficient conditions for existence and uniqueness of solutions on a finite horizon established following the approach of \cite{BDDM:07}. Two auxiliary operator differential equations of subsequent utility are similarly considered. In generalizing the finite dimensional Riccati equations considered in \cite{M:08}, the main technical challenges in the infinite dimensional setting considered here concern the notion of solution for operator differential equations, the consequences of an unbounded $\op{A}$ in analyzing those equations and solutions, and an appropriate notion of semiconvexity for functionals on infinite dimensional spaces. Tools for dealing with these challenges are well-understood, and are cited directly from \cite{BDDM:07,CZ:95,P:83} and \cite{BN:00,HRS:01,R:74} as required. Otherwise, the development largely follows that of the finite dimensional case \cite{M:08}.

\subsection{Riccati equation}
Consider the operator differential Riccati equation posed with respect to Hilbert spaces $\cX$ and $\cW$ by
\begin{align}
	\opdot{P}(t)
	& = \op{A}' \, \op{P}(t) + \op{P}(t)\, \op{A} + \op{P}(t)\, \sigma\, \sigma'\, \op{P}(t) + \op{C}\,,
	\ooer{eq:op-P}
\end{align}
in which $\op{A}:\dom(\op{A})\subset\cX\mapsinto\cX$ is unbounded and densely defined on $\cX$, $\sigma\in\bo(\cW;\cX)$, $\op{C}\in\bo(\cX)$ is self-adjoint and non-negative, $\op{A}'$ and $\sigma'$ denote the respective adjoints of $\op{A}$ and $\sigma$, and $t\in[0,\tau^*]$ for some $\tau^*\in\R_{>0}$. (Throughout, $\bo(\cX;\cY)$ is used to denote the space of bounded linear operators mapping from Banach space $\cX$ to Banach space $\cY$. Where $\cX = \cY$, this notation is abbreviated to $\bo(\cX)$.)

\begin{assumption}
\label{ass:op-A}
$\op{A}$ is boundedly invertible and generates a $C_0$-semigroup of bounded linear operators. 
\end{assumption}

In order to define solutions for the operator differential Riccati equation \er{eq:op-P}, it is convenient \cite{BDDM:07} to 
define two sets of self-adjoint bounded linear operators by
\begin{align}
	\Sigma(\cX)
	& \doteq \left\{ \op{P}\in\bo(\cX) \, \biggl| \, \op{P} \text{ is self-adjoint} \right\},
	\label{eq:Sigma}
	\\
	\Sigma_{\op{M}}(\cX)
	& \doteq \left\{ \op{P}\in\Sigma(\cX) \, \left| \ba{c}
									 \op{P} - \op{M} \text{ is coercive} 
									 \\
									 \text{on } \dom(\op{A})
								\ea \right. \right\},
	\quad\op{M}\in\Sigma(\cX)\,.
	\label{eq:Sigma-bar}
\end{align}
An operator $\op{P}:\dom(\op{P})\subset\cX\mapsinto\cX$ is coercive if there exists an $\eps\in\R_{>0}$ such that $\langle x,\, \op{P}\, x \rangle \ge \eps\, \|x\|^2$ for all $x\in\dom(\op{P})$. If $\op{P}$ is both coercive and self-adjoint, then it has a bounded inverse, see \cite[Example A.4.2, p.609]{CZ:95} and \cite[Problem 10, p.535]{K:78}.

Relevant spaces of (uniformly) continuous and strongly continuous operator-valued functions defined on an interval $I \doteq [0,T]\subset\R_{\ge 0}$, $T\in\R_{>0}$, and taking values in $\Sigma\in\{\bo(\cX),\, \Sigma(\cX),\, \Sigma_{\op{M}}(\cX)\}$, are 
\begin{align}
	C(I;\Sigma)
	& \doteq \left\{ \op{F}:I\mapsinto\Sigma \, \left| \ba{c} 
								\op{F} \text{ is}
								\\
								\text{continuous}
						\ea
						\right. 
	\right\},
	\label{eq:space-C}
	\\
	C_0(I;\Sigma)
	& \doteq \left\{ \op{F}:I\mapsinto\Sigma \, \left| \ba{c} 
								\op{F} \text{ is strongly}
								\\
								\text{continuous}
						\ea \right. \right\}\,.
	\label{eq:space-C0}
\end{align}
\begin{remark}
Any bounded linear operator is the generator of a uniformly continuous semigroup of bounded linear operators (see \cite[Theorem 1.2, p.2]{P:83}). In constrast, the unbounded and densely defined operator $\op{A}:\dom(\op{A})\subset\cX\mapsinto\cX$ of \er{eq:op-P} is the generator of a strongly ($C_0$-) semigroup of bounded linear operators by Assumption \ref{ass:op-A} (which also implies that $\op{A}$ is closed, see also \cite[Corollary 2.5, p.5]{P:83}). These semigroups are subsets of $C(I;\bo(\cX))$ and $C_0(I;\bo(\cX))$ respectively. 
Elements of such a semigroup, denoted by $e^{\op{A}\, t}$ for $t\in\R_{\ge 0}$, satisfy the usual semigroup properties, with $e^{\op{A}\, 0} = \op{I}$ (the identity operator), and $e^{\op{A}\, t} \, e^{\op{A}\, s} = e^{\op{A}\,(t+s)} = e^{\op{A}\,s}\, e^{\op{A}\, t}$ for all $s,t\in\R_{\ge 0}$ such that $s,t,s+t\in I$. (See also Remark \ref{rem:semigroup}.) Note finally that $C(I;\Sigma)\subset C_0(I;\Sigma)$ for $\Sigma\in\{\bo(\cX),\, \Sigma(\cX),\, \Sigma_{\op{M}}(\cX)\}$, where $C(I;\Sigma(\cX)) \subset C(I;\bo(\cX))$ and $C_0(I;\Sigma(\cX))\subset C_0(I;\bo(\cX))$ define vector spaces, while $C(I;\Sigma_\op{M}(\cX))$ and $C_0(I;\Sigma_\op{M}(\cX))$ are merely subsets of those vector spaces. (See Appendix \ref{app:spaces}.)
\end{remark}

A {\em mild} solution of the operator differential Riccati equation \er{eq:op-P} on a time interval $[0,T]$, $T\in\R_{>0}$, is any operator-valued function $\op{P}\in C_0([0,T]; \Sigma(\cX))$ that satisfies 
\begin{align}
	\op{P}(t) \, x
	& = \gamma(\op{P})(t)\, x\,,
	\label{eq:op-P-mild}
\end{align}
for all $x\in\cX$, $t\in[0,T]$, with $\gamma(\op{O})$ defined for every $\op{O}\in C_0([0,T];\Sigma(\cX))$ by
\begin{equation}
	\begin{aligned}
	& \hspace{-2mm}
	\left[ \gamma(\op{O}) (t)\right] x
	\doteq e^{\op{A}'\, t} \, \op{O}(0) \, e^{\op{A}\, t}\, x
	+ \int_0^t e^{\op{A}'\, (t-s)} \left[ \op{O}(s) \, \sigma\, \sigma' \, \op{O}(s) + \op{C} \right] e^{\op{A}\, (t-s)}\, x \, ds
	\end{aligned}
	\label{eq:op-gamma}
\end{equation}
for all $t\in[0,T]$, $x\in\cX$, where $e^{\op{A}'\, \cdot}$ denotes the $C_0$-semigroup generated by the operator adjoint $\op{A}'$ (see, for example,  \cite[Theorem 2.2.6, p.37]{CZ:95}). As per \cite{BDDM:07}, it is convenient to introduce an analogous operator differential Riccati equation to \er{eq:op-P}, defined with respect to the {\em Yosida approximations} $\op{A}_n\in\bo(\cX)$ of $\op{A}$ defined for all $n\in\N$, see Appendix \ref{app:Yosida}. In particular,
\begin{align}
	\opdot{P}_n(t)
	& = \op{A}_n' \, \op{P}_n(t) + \op{P}_n(t)\, \op{A}_n + \op{P}_n(t)\, \sigma\, \sigma'\, \op{P}_n(t) + \op{C}\,.
	\label{eq:op-P-n}
\end{align}
Similarly, a solution of \er{eq:op-P-n} on a time interval $[0,T]$, $T\in\R_{>0}$, is any operator-valued function $\op{P}_n \in C([0,T];\Sigma(\cX))$ that satisfies 
\begin{align}
	\op{P}_n(t) \, x
	& = \gamma_n(\op{P}_n)(t)\, x\,,
	\label{eq:op-P-n-mild}
\end{align}
for all $x\in\cX$, $t\in[0,T]$, with $\gamma_n(\op{O})$ defined for every $\op{O}\in C([0,T];\Sigma(\cX))$ by
\begin{equation}
	\begin{aligned}
	& \hspace{-2mm} \left[ \gamma_n(\op{O})(t) \right] x
	\doteq e^{\op{A}_n'\, t} \, \op{O}(0) \, e^{\op{A}_n\, t}\, x
	+ \int_0^t e^{\op{A}_n'\, (t-s)} \left[ \op{O}(s) \, \sigma\, \sigma' \, \op{O}(s) + \op{C} \right] e^{\op{A}_n\, (t-s)}\, x \, ds
	\end{aligned}
	\label{eq:op-gamma-n}
\end{equation}
for all $t\in[0,T]$, $x\in\cX$, where $e^{\op{A}_n'\, t}\in\bo(\cX)$ denotes an element of the uniformly continuous semigroup generated by the Yosida approximation adjoint $\op{A}_n'\in\bo(\cX)$.

\begin{remark}
Equations \er{eq:op-P-mild} and \er{eq:op-P-n-mild} are integral equations that may be derived formally from their operator differential Riccati equation counterparts \er{eq:op-P} and \er{eq:op-P-n}, respectively, see Appendix \ref{app:integral-forms}.
\end{remark}

\begin{theorem}
\label{thm:existence-uniqueness-P}
Given any $\op{P}_0\in\Sigma(\cX)$, there exists a $\tau\in\R_{>0}$ such that the operator differential Riccati equations \er{eq:op-P}, \er{eq:op-P-n} exhibit respective unique solutions $\op{P}\in C_0([0,\tau];\Sigma(\cX))$, $\op{P}_n\in C([0,\tau];\Sigma(\cX))$ satisfying $\op{P}(0) = \op{P}_0 = \op{P}_n(0)$ for all $n\in\N$. Furthermore, for all $x\in\cX$, 
\begin{align}
	\lim_{n\rightarrow\infty} \op{P}_n(\cdot) \, x = \op{P}(\cdot)\, x\,,
	\label{eq:op-limit}
\end{align}
where the limit is defined with respect to the Banach space $(C([0,\tau];\cX),\, \|\cdot\|_{C([0,\tau];\cX)})$, in which $\|x\|_{C([0,\tau];\cX)} \doteq \sup_{t\in[0,\tau]} \|x(t)\|$ for all $x\in C([0,\tau];\cX)$.
\end{theorem}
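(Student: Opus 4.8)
The plan is to obtain both solutions simultaneously as fixed points of the integral maps $\gamma$ and $\gamma_n$ of \er{eq:op-gamma} and \er{eq:op-gamma-n} via the contraction mapping principle, and crucially to extract a single horizon $\tau$ that works uniformly in $n$. First I would fix $R>0$ depending only on $\|\op{P}_0\|$ and, for each $n$, regard $\gamma_n$ as a self-map on the closed (hence complete) ball
\[
	\mathcal{B}_R \doteq \left\{ \op{O}\in C([0,\tau];\Sigma(\cX)) \, : \, \op{O}(0)=\op{P}_0,\ \ts{\sup_{t\in[0,\tau]}}\|\op{O}(t)\|\le R \right\}
\]
under the sup-norm; the analogous ball in $C_0([0,\tau];\Sigma(\cX))$ carries $\gamma$. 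The essential preliminary is a growth bound $\|e^{\op{A}\, t}\|,\|e^{\op{A}_n\, t}\|\le M\, e^{\omega t}$ valid for all $t\in[0,\tau^*]$ with constants $M\ge 1$, $\omega\in\R$ \emph{independent of $n$}; this is precisely the uniform bound furnished by the Yosida construction (Appendix \ref{app:Yosida}) under Assumption \ref{ass:op-A}, and the same bound holds for the adjoint semigroups.

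With these bounds in hand I would verify the two hypotheses of the contraction principle. For the self-map property, \er{eq:op-gamma-n} gives $\|\gamma_n(\op{O})(t)\|\le M^2 e^{2\omega\tau}\|\op{P}_0\| + \tau\, M^2 e^{2\omega\tau}(R^2\|\sigma\|^2+\|\op{C}\|)$, so taking $R$ slightly larger than $M^2 e^{2\omega\tau^*}\|\op{P}_0\|$ and $\tau$ small keeps $\gamma_n(\mathcal{B}_R)\subseteq\mathcal{B}_R$. For the contraction, the quadratic nonlinearity is locally Lipschitz, $\|\op{O}_1\sigma\sigma'\op{O}_1-\op{O}_2\sigma\sigma'\op{O}_2\|\le 2R\|\sigma\|^2\|\op{O}_1-\op{O}_2\|$, and since the boundary terms cancel (all elements of $\mathcal{B}_R$ agree at $t=0$) this yields $\|\gamma_n(\op{O}_1)-\gamma_n(\op{O}_2)\|_\infty\le \tau\, 2R\, M^2 e^{2\omega\tau}\|\sigma\|^2\,\|\op{O}_1-\op{O}_2\|_\infty$, a contraction once $\tau$ is small --- and every constant here is independent of $n$, which is exactly what delivers a common $\tau$. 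I would also check that $\gamma_n$ (resp.\ $\gamma$) preserves self-adjointness, since both the integrand $\op{O}(s)\sigma\sigma'\op{O}(s)+\op{C}$ and the conjugations $e^{\op{A}'\, r}(\cdot)\, e^{\op{A}\, r}$ map self-adjoint operators to self-adjoint operators, so the fixed points lie in $\Sigma(\cX)$. Norm-continuity of the (bounded-generator) Yosida semigroups places $\op{P}_n$ in $C([0,\tau];\Sigma(\cX))$, while the merely strong continuity of $e^{\op{A}\,\cdot}$ places $\op{P}$ in $C_0([0,\tau];\Sigma(\cX))$, accounting for the two distinct function spaces in the statement. The Banach fixed point theorem then produces the unique $\op{P}$ and $\op{P}_n$ with $\op{P}(0)=\op{P}_0=\op{P}_n(0)$.

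It remains to establish the limit \er{eq:op-limit}. Fixing $x\in\cX$ and subtracting \er{eq:op-gamma-n} from \er{eq:op-gamma}, I would organize $[\op{P}_n(t)-\op{P}(t)]\, x$ into semigroup-difference terms, in which $e^{\op{A}_n\, r}-e^{\op{A}\, r}$ (and its adjoint) act on fixed vectors such as $x$, $\op{P}_0\, e^{\op{A}\, r}x$, and $[\op{P}(s)\sigma\sigma'\op{P}(s)+\op{C}]\, e^{\op{A}(t-s)}x$, together with a quadratic term governed by $\op{P}_n-\op{P}$. The uniform bounds from the construction dominate each integrand uniformly in $n$, so the semigroup-difference terms vanish as $n\to\infty$ by strong convergence of the Yosida semigroups (uniform in the time argument on $[0,\tau]$, per the Appendix) together with dominated convergence, while the quadratic term is bounded by an expression of the form $C\int_0^t\|[\op{P}_n(s)-\op{P}(s)]\, (\cdot)\|\, ds$; Gronwall's inequality would then force $\sup_{t\in[0,\tau]}\|[\op{P}_n(t)-\op{P}(t)]\, x\|\to 0$. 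The main obstacle lives precisely here: because the Yosida semigroups converge only strongly and \emph{not} in operator norm, the quadratic term applies $\op{P}_n(s)-\op{P}(s)$ to \emph{moving} vectors $e^{\op{A}(t-s)}x$ and $\sigma\sigma'\op{P}(s)\, e^{\op{A}(t-s)}x$ rather than to the fixed $x$, so a naive scalar Gronwall does not close. I would resolve this by upgrading to convergence that is uniform over the compact sets traced out by the continuous maps $(s,t)\mapsto e^{\op{A}(t-s)}x$ and $(s,t)\mapsto \sigma\sigma'\op{P}(s)\, e^{\op{A}(t-s)}x$ --- using the uniform bound $\sup_n\sup_{t\in[0,\tau]}\|\op{P}_n(t)\|\le R$ to pass from pointwise to uniform-on-compacts convergence --- and running the Gronwall estimate on that uniform quantity, which does close.
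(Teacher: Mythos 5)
Your contraction-mapping construction of $\op{P}$ and $\op{P}_n$ is essentially the paper's own proof: the same uniform-in-$n$ Yosida bound (the paper's $M_T$ of \er{eq:M-T}), the same ball of radius set by $\|\op{P}_0\|$, the same self-map and Lipschitz estimates with constants independent of $n$, hence a common horizon $\tau$; your explicit check that $\gamma_n$ preserves self-adjointness and your accounting for the $C$ versus $C_0$ regularity are fine. However, two genuine gaps remain. First, the Banach fixed point theorem gives uniqueness only \emph{within} your ball $\mathcal{B}_R$, whereas the theorem asserts uniqueness in all of $C([0,\tau];\Sigma(\cX))$ (resp.\ $C_0([0,\tau];\Sigma(\cX))$): a second solution of the integral equation with larger sup-norm is not excluded by your argument. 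The paper devotes a separate step to exactly this: any putative second solution $\ophat{P}_n\in C([0,\tau];\Sigma(\cX))$ is norm-bounded on $[0,\tau]$ (via Lemmas \ref{lem:spaces} and \ref{lem:norms}), and a Gronwall inequality applied to $\|\op{P}_n(t)\,x-\ophat{P}_n(t)\,x\|$ then forces the two to coincide. You need this supplementary argument.

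Second, and more seriously, your proof of the limit \er{eq:op-limit} does not close as claimed. (The paper sidesteps this point entirely, citing \cite[Lemma 2.1, p.389]{BDDM:07}.) You correctly isolate the obstacle --- the quadratic term applies $\op{P}_n(s)-\op{P}(s)$ to moving vectors --- but the proposed fix fails. Set $\phi_n(s)\doteq\sup_{z\in K}\|[\op{P}_n(s)-\op{P}(s)]\,z\|$ with $K$ the compact set traced out by $(s,t)\mapsto e^{\op{A}(t-s)}x$ and $(s,t)\mapsto\sigma\,\sigma'\,\op{P}(s)\,e^{\op{A}(t-s)}x$. To bound $\phi_n(t)$ you must run your difference estimate for \emph{every} $z\in K$, and its right-hand side then involves $\op{P}_n(s)-\op{P}(s)$ acting on $e^{\op{A}(t-s)}z$ and on $\sigma\,\sigma'\,\op{P}(u)\,e^{\op{A}(t-s)}z$, i.e.\ on longer words in the alphabet $\{e^{\op{A}\,r}\}\cup\{\sigma\,\sigma'\,\op{P}(u)\}$ applied to $x$; these lie outside $K$. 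No fixed compact set is invariant under this operation (the norms of such words can grow geometrically with their length), so the Gronwall inequality is never self-consistent: the quantity on the left is not the quantity appearing under the integral on the right. A correct route exploits what your construction already provides: the contraction rate of $\gamma_n$ is uniform in $n$, so the Picard iterates satisfy $\|\op{P}_n^{(k)}-\op{P}_n\|_{C[0,\tau]}\le 2^{-k}C$ uniformly in $n$; one then proves by induction on $k$ that $\op{P}_n^{(k)}(\cdot)\,x\to\op{P}^{(k)}(\cdot)\,x$ in $C([0,\tau];\cX)$ as $n\to\infty$ (each iterate is an explicit finite expression, so the moving-vector issue is resolved level by level using the uniform bounds and uniform-on-compacts strong convergence of the Yosida semigroups), and finally interchanges the limits in $k$ and $n$ via the uniform rate. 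Alternatively, simply cite \cite[Lemma 2.1, p.389]{BDDM:07}, as the paper does.
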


\begin{remark}
The limit \er{eq:op-limit} is a statement of strong operator convergence of $\op{P}_n(\cdot)\, x$ to $\op{P}(\cdot)\, x$ in $C([0,\tau];\cX)$, given $x\in\cX$. This is strictly weaker than uniform operator convergence of $\op{P}_n$ to $\op{P}$ in $C([0,\tau];\bo(\cX))$ via the norm $\|\cdot\|_{C[0,\tau]}$ (see, for example, \cite[p.263]{K:78}). As $(C([0,\tau];\bo(\cX)),\, \|\cdot\|_{C[0,\tau]})$ defines a Banach space, strong operator convergence allows the limit \er{eq:op-limit} to reside in $C_0([0,\tau];\bo(\cX))\setminus C([0,\tau];\bo(\cX))$ when $\op{A}$ is unbounded.
\end{remark}

\begin{remark}
It may also be noted, by an analogous argument to \cite[Proposition 2.1, p.391]{BDDM:07}, that $\op{P}\in C_0([0,\tau];\bo(\cX))$ is a mild solution of \er{eq:op-P} if and only if it is a {\em weak} solution (see \cite[Definition 2.1, p.390]{BDDM:07}). 
\end{remark}

\begin{proof}[Theorem \ref{thm:existence-uniqueness-P}]
The proof follows that of \cite[Lemma 2.2, p.391]{BDDM:07}, while demonstrating global uniqueness on a finite horizon. It is not extended to the infinite horizon due to the possibility of finite escape. Fix $T\in\R_{>0}$. Given the unbounded and densely defined linear operator $\op{A}$ satisfying Assumption \ref{ass:op-A} and as per \er{eq:op-P}, the main Yosida approximation Theorem \ref{thm:Yosida} (see Appendix \ref{app:Yosida} and \cite{BDDM:07,P:83}) implies that there exists $M\in\R_{\ge 1}$ and $\omega\in\R_{\ge 0}$ such that the Yosida approximations $\op{A}_n\in\bo(\cX)$, $n\in\N$, are well-defined and satisfy \er{eq:Yosida-limit}, \er{eq:Yosida-bounds}. Hence, 
\begin{align}
	M_T 
	& \doteq \sup_{n\in\N} \sup_{t\in[0,T]} \max\left( \left\| e^{\op{A}\, t} \right\|_{\bo(\cX)},\, \left\| e^{\op{A}_n\, t} \right\|_{\bo(\cX)} \right) < \infty\,,
	\label{eq:M-T}
\end{align}
where $\|\cdot\|_{\bo(\cX)}$ denotes the induced operator norm on $\bo(\cX)$. (Note in particular that $M_T$ of \er{eq:M-T} depends only on operator $\op{A}$ and the time horizon $T\in\R_{\ge 0}$.) Fix any $\op{P}_0\in\Sigma(\cX)$, $r\in\R_{>0}$, and $\tau\in(0,T]$ such that
\begin{align}
	& \hspace{-1mm}
	r > 2\, M_T^2 \, a\,, \,
	\tau 
	< \min \left( \frac{a}{r^2  b + \|\op{C}\|_{\bo(\cX)}},\, \frac{1}{4\, r\, M_T^2\, b} \right), 
	\label{eq:tau}
\end{align} 
where $a\doteq  \|\op{P}_0\|_{\bo(\cX)}$ and $b\doteq  \| \sigma\, \sigma' \|_{\bo(\cX)}$. Let $B_{C[0,\tau]}(r)$ and $B_{C_0[0,\tau]}(r)$ denote respective balls of radius $r$ in $C([0,\tau];\Sigma(\cX))$ and $C_0([0,\tau];\Sigma(\cX))$, defined with respect to the norms $\|\cdot\|_{C[0,\tau]}$ and $\|\cdot\|_{C_0[0,\tau]}$ of \er{eq:norm-C-C0}. That is,
\begin{align}
	\begin{aligned}
	B_{C[0,\tau]}(r) & \doteq \left\{ \op{F} \in C([0,\tau];\Sigma(\cX)) \, \biggl| \, \|\op{F}\|_{C[0,\tau]} \le r \right\}\,,
	\\
	B_{C_0[0,\tau]}(r) & \doteq \left\{ \op{F} \in C_0([0,\tau];\Sigma(\cX)) \, \biggl| \, \|\op{F}\|_{C_0[0,\tau]} \le r \right\}\,.
	\end{aligned}
	\nn
\end{align}
(Note by Lemma \ref{lem:spaces} that $B_{C[0,\tau]}(r)\subset B_{C_0[0,\tau]}(r)$.) Fix any $\op{P}\in B_{C[0,\tau]}(r)$ 
satisfying $\op{P}(0) = \op{P}_0$. Applying the operator $\gamma_n$ of \er{eq:op-gamma-n} to $\op{P}$, evaluating at time $t\in[0,\tau]$, and applying the norm $\|\cdot\|_{\bo(\cX)}$ (while dropping the $\bo(\cX)$ subscript),
\begin{align}
	\left\| \gamma_n(\op{P})(t) \right\|
	& \le \|e^{\op{A}_n' t}\| \, \|\op{P}_0\|\, \|e^{\op{A}_n\, t}\| +
	\hspace{-1mm}
	\int_0^t \hspace{-2mm} \|e^{\op{A}_n' (t-s)}\|  \left[ \|\op{P}\| \, \|\sigma\, \sigma' \|\, \|\op{P}\|
			+ \|\op{C}\| \right] \|e^{\op{A}_n(t-s)}\| \, ds
	\nn\\
	& \le M_T^2 \left( a + \tau \, [ r^2\, b + \|\op{C}\| ] \right)
	\le 2\, M_T^2 \, a \le r\,,
	\nn
\end{align}
where \er{eq:M-T} and \er{eq:tau} have been applied. Taking the supremum over $t\in[0,\tau]$ (and restoring the norm subscripts),
$
	\|\gamma_n (\op{P})\|_{C([0,\tau])} 
	= \sup_{t\in[0,\tau]} \|\gamma_n(\op{P})(t)\|_{\bo(\cX)} 
	\le r
$. 
As $\op{P}\in B_{C[0,\tau]}(r)$ is arbitrary, it follows immediately that $\gamma_n:B_{C[0,\tau]}(r) \mapsinto B_{C[0,\tau]}(r)$. In order to show that $\gamma_n$ is a contraction on $B_{C[0,\tau]}(r)$, fix any $\ophat{P}\in B_{C[0,\tau]}(r)$ satisfying $\ophat{P}(0) = \op{P}_0$. Applying \er{eq:op-gamma-n} for $t\in[0,\tau]$,
\begin{align}
	\|\gamma_n(\op{P})(t) - \gamma_n(\ophat{P})(t) \|_{\bo(\cX)}
	& \le M_T^2 \int_0^t \left\| \op{P}(s)\, \sigma \, \sigma' \, \op{P}(s) - \ophat{P}(s)\, \sigma \, \sigma' \, \ophat{P}(s) \right\|_{\bo(\cX)} \, ds\,,
	\label{eq:contraction-1-a}
\end{align}
where for all $s\in[0,t]$,
\begin{align}
	& \left\| \op{P}(s)\, \sigma \, \sigma' \, \op{P}(s) - \ophat{P}(s)\, \sigma \, \sigma' \, \ophat{P}(s) \right\|_{\bo(\cX)}
	\nn\\
	& = \left\| \op{P}(s)\, \sigma \, \sigma' \, \left[ \op{P}(s) - \ophat{P}(s) \right]
				+ \left[ \op{P}(s) - \ophat{P}(s) \right] \, \sigma\, \sigma' \, \ophat{P}(s) 
			\right\|_{\bo(\cX)}
	\nn\\
	& \le \left( 
				\|\op{P}\|_{C[0,\tau]} 
				+ \|\ophat{P}\|_{C[0,\tau]} 
		\right) 
		\, b \, \| \op{P} - \ophat{P}\|_{C[0,\tau]} 
	\le 2\, r\, b \, \| \op{P} - \ophat{P}\|_{C[0,\tau]}\,.
	\label{eq:contraction-1-b}
\end{align}
Combining \er{eq:contraction-1-a} and \er{eq:contraction-1-b} and taking the supremum over $t\in[0,\tau]$,
\begin{align}
	\|\gamma_n (\op{P}) - \gamma_n (\ophat{P}) \|_{C[0,\tau]}
	& \le 2\, r\, \tau\, M_T^2\, b \, \| \op{P} - \ophat{P}\|_{C[0,\tau]}
	< \ts{\frac{1}{2}} \, \| \op{P} - \ophat{P}\|_{C[0,\tau]}\,.
	\nn
\end{align}
Hence, $\gamma_n:B_{C[0,\tau]}(r) \mapsinto B_{C[0,\tau]}(r)$ defines a contraction on $B_{C[0,\tau]}(r)$. Consequently, the Banach Fixed Point Theorem (for example, \cite[Theorem 5.1-4, p.303]{K:78}) implies that there exists a unique solution $\op{P}_n\in B_{C[0,\tau]}(r)$ of \er{eq:op-P-n-mild} for all $t\in[0,\tau]$ and $x\in\cX$. In order to conclude global uniqueness in $C([0,\tau];\Sigma(\cX))$, suppose there exists a second solution $\ophat{P}_n\in C([0,\tau]; \Sigma(\cX))$ of \er{eq:op-P-n} satisfying $\ophat{P}_n(0) = \op{P}_0$. That is, $\op{P}_n = \gamma_n(\op{P}_n)$ and $\ophat{P}_n = \gamma_n(\ophat{P}_n)$, where $\gamma_n$ is as per \er{eq:op-gamma-n}. Given any $x\in\cX$, using an inequality analogous to \er{eq:contraction-1-a},
\begin{align}
	\| \op{P}_n(t)\, x - \ophat{P}_n(t)\, x\|
	& \le M_T^2 \, b\,  \int_0^t \left( r + \|\ophat{P}_n\|_{C[0,\tau]} \right) \| \op{P}_n(s)\, x - \ophat{P}_n(s)\, x \| \, ds\,,
	\nn
\end{align}
where $\|\op{P}_n\|_{C[0,\tau]} \le r$ has been used. As $\ophat{P}_n\in\bo(\cX; C([0,\tau];\cX))$ by Lemma \ref{lem:spaces}, there exists a $K\in\R_{\ge 0}$ such that $\|\ophat{P}_n(\cdot)\, x\|_{C([0,\tau];\cX)} \le K\, \|x\|$ for all $x\in\cX$. Consequently, by Lemma \ref{lem:norms}, 
$
	\| \ophat{P}_n \|_{C[0,\tau]} = \|\ophat{P}_n\|_{C_0[0,\tau]} 
	= \sup_{\|x\|=1} \|\ophat{P}_n(\cdot)\, x\|_{C([0,\tau];\cX)} 
	\le K<\infty
$.
Combining these facts yields the inequality
\begin{align}
	\| \op{P}_n(t)\, x - \ophat{P}_n(t)\, x \|
	& 
	\le M_T^2 \, b\,  (r + K) \int_0^t  \| \op{P}_n(s)\, x - \ophat{P}_n(s)\, x \| \, ds
	\,.
	\nn
\end{align}
As $\op{P}_n - \ophat{P}_n\in C_0([0,\tau];\Sigma(\cX))$ is strongly continuous, the attendant function $\|\op{P}_n(\cdot)\, x - \ophat{P}_n(\cdot)\, x\|:[0,\tau]\mapsinto\R_{\ge 0}$ is continuous by definition. This admits a straightforward application of Gronwall's inequality, yielding 
$
	\|\op{P}_n(t)\, x - \ophat{P}_n(t)\, x\| \le 0 
$ for all $t\in[0,\tau]$, $x\in\cX$.
That is, $\op{P}_n = \ophat{P}_n$, so the asserted uniqueness is indeed global on $C([0,\tau];\Sigma(\cX))$. An analogous argument, using the same $M_T\in\R_{\ge 0}$ of \er{eq:M-T}, $\tau\in[0,T]$, and $r\in\R_{>0}$, implies the existence of a unique solution $\op{P}\in C_0([0,\tau];\Sigma(\cX))$ of \er{eq:op-P-mild}. The fact that \er{eq:op-limit} holds follows as per \cite[Lemma 2.1, p. 389]{BDDM:07}.
%
%
\end{proof}
\begin{assumption}
\label{ass:coercive}
There exists an operator $\op{M}\in\Sigma(\cX)$ such that the unique mild solution $\op{P}\in C_0([0,\tau_0]; \Sigma(\cX))$ of \er{eq:op-P} satisfying $\op{P}(0) = \op{M}$ that exists for some $\tau_0\in\R_{>0}$ by Theorem \ref{thm:existence-uniqueness-P} is such that $\op{P}(t) - \op{M}$ is coercive for all $t\in(0,\tau_0]$. That is, 
\begin{align}
	& \hspace{-1mm} \op{P}\in C_0([0,\tau_0]; \Sigma(\cX)) \cap C_0((0,\tau_0]; \Sigma_{\op{M}}(\cX)).
	\label{eq:op-P-coercive}
\end{align}
\if{false}

An operator $\op{M}\in\Sigma(\cX)$ exists such that the following holds:
\begin{enumerate}
\item The operator $\Gamma(\op{M})$ defined by
\begin{align}
	& 
	\Gamma(\op{M})
	\doteq \op{A}' \, \op{M} + \op{M}\, \op{A} + \op{M}\, \sigma\, \sigma' \op{M}  + \op{C}\,,
	\label{eq:op-Gamma}
\end{align}
is coercive, where $\op{A}$, $\sigma$, and $\op{C}$ are as per \er{eq:op-P}; and
\item 
\end{enumerate}

\fi
\end{assumption}
\vspace{1mm}
\begin{theorem}
\label{thm:optilde-P-existence}
Given any $\op{M}\in\Sigma(\cX)$ and $\tau_0\in\R_{>0}$ satisfying Assumption \ref{ass:coercive}, and any $\optilde{M}\in\Sigma_{\op{M}}(\cX)$, there exists a $\tau_1\in(0,\tau_0]$ such that a unique mild solution 
\begin{align}
	\optilde{P}
	& \in C_0([0,\tau_1]; \Sigma(\cX)) \cap C_0((0,\tau_1]; \Sigma_{\op{M}}(\cX))
	\label{eq:optilde-P-existence}
\end{align}
of \er{eq:op-P} satisfying $\optilde{P}(0) = \optilde{M}$ exists.
\end{theorem}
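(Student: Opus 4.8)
The plan is to split the claim into existence–uniqueness, which is immediate, and coercivity, which carries the content. For the former I would apply Theorem \ref{thm:existence-uniqueness-P} twice. With $\op{P}_0 = \optilde{M} \in \Sigma(\cX)$ it supplies a horizon $\tau_{\optilde{M}} \in \R_{>0}$, a unique mild solution $\optilde{P} \in C_0([0,\tau_{\optilde{M}}];\Sigma(\cX))$ with $\optilde{P}(0) = \optilde{M}$, and Yosida approximants $\optilde{P}_n$ converging as in \er{eq:op-limit}; with $\op{P}_0 = \op{M}$ it supplies the reference solution $\op{P}$ of Assumption \ref{ass:coercive} on some $[0,\tau_{\op{M}}]$, with approximants $\op{P}_n$ again converging as in \er{eq:op-limit}. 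Setting $\tau_1 \doteq \min(\tau_{\optilde{M}},\tau_{\op{M}},\tau_0) \in (0,\tau_0]$ then yields membership of $\optilde{P}$ in $C_0([0,\tau_1];\Sigma(\cX))$ together with strong convergence of both approximant families on $[0,\tau_1]$; only the coercivity that places $\optilde{P}$ in $C_0((0,\tau_1];\Sigma_{\op{M}}(\cX))$ remains.

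For the coercivity I would use the decomposition
\begin{align}
  \optilde{P}(t) - \op{M} & = \left[ \optilde{P}(t) - \op{P}(t) \right] + \left[ \op{P}(t) - \op{M} \right], \quad t \in (0,\tau_1]. \nn
\end{align}
The second bracket is coercive on $(0,\tau_1] \subseteq (0,\tau_0]$ directly by Assumption \ref{ass:coercive}. Since a coercive self-adjoint operator plus a non-negative self-adjoint operator is again coercive, with the same constant, it suffices to prove that $\op{D}(t) \doteq \optilde{P}(t) - \op{P}(t)$ is non-negative for $t \in [0,\tau_1]$. At $t = 0$ this holds because $\op{D}(0) = \optilde{M} - \op{M}$ is coercive on $\dom(\op{A})$ by $\optilde{M} \in \Sigma_{\op{M}}(\cX)$ and, being bounded, is therefore $\ge 0$ on all of $\cX$ by density.

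Non-negativity of $\op{D}(t)$ for $t > 0$ I would obtain at the level of the Yosida approximations, where the generators are bounded and the solutions are classical. Subtracting the two copies of \er{eq:op-P-n} and combining the quadratic terms symmetrically, $\op{D}_n \doteq \optilde{P}_n - \op{P}_n$ solves the linear Lyapunov equation $\opdot{D}_n = \op{G}_n' \, \op{D}_n + \op{D}_n \, \op{G}_n$ with bounded generator $\op{G}_n \doteq \op{A}_n + \demi\,\sigma\,\sigma'(\optilde{P}_n + \op{P}_n)$. Its solution admits the congruence representation $\op{D}_n(t) = \Psi_n(t)\, \op{D}_n(0)\, \Psi_n(t)'$, where $\Psi_n$ is the bounded evolution generated by $\op{G}_n'$, i.e. $\dot{\Psi}_n = \op{G}_n'\,\Psi_n$ with $\Psi_n(0) = \op{I}$, as one verifies by direct differentiation. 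Because $\op{D}_n(0) = \optilde{M} - \op{M} \ge 0$, congruence gives $\langle x, \op{D}_n(t)\, x\rangle = \langle \Psi_n(t)' x,\, \op{D}_n(0)\, \Psi_n(t)' x\rangle \ge 0$ for every $x \in \cX$ and $n \in \N$. Passing to the limit through \er{eq:op-limit} applied to both families, $\op{D}_n(t)\, x \to \op{D}(t)\, x$ for each $x$ and $t \in [0,\tau_1]$, so $\langle x, \op{D}(t)\, x\rangle = \lim_{n\to\infty} \langle x, \op{D}_n(t)\, x\rangle \ge 0$, i.e. $\op{D}(t) \ge 0$.

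The main obstacle is precisely this non-negativity, together with the temptation to strengthen it to coercivity of $\op{D}(t)$ itself by bounding $\|\Psi_n(t)' x\| \ge c\,\|x\|$ from below. Such a bound is not uniform in $n$ — the Yosida generators satisfy $\|\op{A}_n\| \to \infty$, and the $C_0$-semigroup generated by the unbounded limit $\op{A}'$ need not be boundedly invertible — so a strictly positive coercivity constant for $\op{D}(t)$ would not survive the limit. Routing the coercivity of $\optilde{P}(t) - \op{M}$ through the Assumption \ref{ass:coercive} term $\op{P}(t) - \op{M}$, and demanding only non-negativity of $\op{D}(t)$, circumvents this difficulty entirely, since non-negativity, unlike a uniform coercivity constant, is preserved under strong limits.
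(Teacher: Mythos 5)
Your proof is correct and is essentially the paper's own argument (Appendix \ref{app:optilde-P-proof}): your $\op{D}_n$, $\op{G}_n$ and $\Psi_n(t)$ are exactly the paper's $\optilde{E}_n$, $\optilde{L}_n$ and $\optilde{U}_n(t,0)$, and both proofs obtain non-negativity of the difference via the Lyapunov congruence representation at the Yosida level, pass it through the strong limit \er{eq:op-limit}, and draw the coercivity constant from $\op{P}(t)-\op{M}$ of Assumption \ref{ass:coercive} rather than from the difference term. The only organizational difference is that the paper assembles the three-term decomposition before taking $n\rightarrow\infty$ and devotes Claim \ref{cla:existence-uniqueness-Y} (via a Uniform Boundedness Theorem argument) to constructing the evolution family that you invoke as standard linear theory for bounded, continuous coefficients.
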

\vspace{1mm}
\begin{proof}
See Appendix \ref{app:optilde-P-proof}.
\end{proof}

\if{false}

\begin{remark}
By inspection of \er{eq:op-Gamma}, $\Gamma(\op{M})$ is not in $\Sigma(\cX)$ for arbitrary $\op{M}\in\Sigma(\cX)$ as $\op{A}$ is unbounded. However, by Assumption \ref{ass:op-A}, it is possible to choose $\op{M} = (\op{A}^{-1})'\, \op{M}_0 \, \op{A}^{-1}$ for any $\op{M}_0\in\Sigma(\cX)$, thereby yielding $\Gamma(\op{M})\in\bo(\cX)$ (via an extension from $\dom(\op{A})$ to $\cX$).
\end{remark}

\fi


\subsection{Auxiliary equations}
In proposing a max-plus dual space fundamental solution to the differential operator Riccati equation \er{eq:op-P}, two (additional) auxiliary operator differential equations are of interest. These equations, defined with respect to the same Hilbert spaces $\cX$ and $\cW$, are given by
\begin{align}
	\opdot{Q}(t)
	& = \op{A}' \, \op{Q}(t) + \op{P}(t) \, \sigma\, \sigma' \, \op{Q}(t)\,,
	\label{eq:op-Q}
	\\
	\opdot{R}(t)
	& = \op{Q}'(t) \, \sigma\, \sigma' \, \op{Q}(t)\,,
	\label{eq:op-R} 
\end{align}
in which $\op{A}:\dom(\op{A})\subset\cX\mapsinto\cX$ and $\sigma\in\bo(\cW;\cX)$ are defined as per \er{eq:op-P}. Also as per \er{eq:op-P}, any operator-valued functions $\op{Q}\in C_0([0,\tau];\bo(\cX))$ and $\op{R}\in C_0([0,\tau];\Sigma(\cX))$ satisfying
\begin{align}
	\op{Q}(t)\, x
	& = e^{\op{A}'\, t}\, \op{Q}(0)\, x + \int_0^t e^{\op{A}' \, (t-s)} \left[ \op{P}(s) \, \sigma\, \sigma' \, \op{Q}(s) \right] x\, ds\,,
	\label{eq:op-Q-mild}
	\\
	\op{R}(t)\, x
	& = \op{R}(0)\, x + \int_0^t \op{Q}(s)' \, \sigma\, \sigma' \, \op{Q}(s) \, x\, ds\,,
	\label{eq:op-R-mild}
\end{align}
for all $x\in\cX$, $t\in[0,\tau]$, $\tau\in\R_{>0}$, are defined to be {\em mild solutions} of \er{eq:op-Q} and \er{eq:op-R} (respectively) on $[0,\tau]$. With regard to the range of $\op{Q}$, note that $\op{Q}(t)\in\bo(\cX)$ is not self-adjoint by inspection of \er{eq:op-Q} or \er{eq:op-Q-mild}. That is, it may be shown that $\op{Q}\in C_0([0,\tau];\bo(\cX))$ (rather than $C_0([0,\tau];\Sigma(\cX))$). On the other hand, $\op{R}(t)\in\Sigma(\cX)$ is self-adjoint by inspection of \er{eq:op-R} or \er{eq:op-R-mild}. As per \er{eq:op-P} and \er{eq:op-P-n}, it is convenient to introduce analogous operator differential equations to \er{eq:op-Q} and \er{eq:op-R}, defined with respect to the Yosida approximation $\op{A}_n\in\bo(\cX)$ of $\op{A}$ for all $n\in\N$. In particular,
\begin{align}
	\opdot{Q}_n(t)
	& = \op{A}_n' \, \op{Q}_n(t) + \op{P}_n(t) \, \sigma\, \sigma'\, \op{Q}_n(t)\,,
	\label{eq:op-Q-n}
	\\
	\opdot{R}_n(t)
	& = \op{Q}_n'(t) \, \sigma\, \sigma' \, \op{Q}_n(t)\,.
	\label{eq:op-R-n}
\end{align}
Solutions of \er{eq:op-Q-n}, \er{eq:op-R-n} are any operator valued functions $\op{Q}_n\in C([0,t];\bo(\cX))$, $\op{R}_n\in C([0,t];\Sigma(\cX))$ that satisfies the corresponding integral equation, i.e.
\begin{align}
	\op{Q}_n(t)\, x
	& = e^{\op{A}_n'\, t}\, \op{Q}_n(0)\, x + \int_0^t e^{\op{A}_n' \, (t-s)} \left[ \op{P}_n(s) \, \sigma\, \sigma' \, \op{Q}_n(s) \right] x\, ds\,,
	\label{eq:op-Q-n-mild}
	\\
	\op{R}_n(t)\, x
	& = \op{R}_n(0)\, x + \int_0^t \op{Q}_n(s)' \, \sigma\, \sigma' \, \op{Q}_n(s) \, x\, ds\,,
	\label{eq:op-Q-R-n-mild}	
\end{align}
for all $x\in\cX$, where $\op{P}_n\in C_0([0,t];\Sigma(\cX))$ is the solution of \er{eq:op-P-n} (see also Appendix \ref{app:integral-forms}). Following the arguments used in the proofs of Theorems \ref{thm:existence-uniqueness-P}, existence of unique solutions of \er{eq:op-Q}, \er{eq:op-R}, \er{eq:op-Q-n}, \er{eq:op-R-n} can also be established for specific initial conditions. 

\begin{theorem}
\label{thm:existence-uniqueness-Q-R}
Given any $\op{M}\in\Sigma(\cX)$, and $\tau\in\R_{>0}$, $\op{P}\in C_0([0,\tau];\Sigma(\cX))$, $\op{P}_n\in C([0,\tau];\Sigma(\cX))$ as specified by Theorem \ref{thm:existence-uniqueness-P} with $\op{P}_0 = \op{M}$, there exists a $\tau_2\in(0,\tau]$ such that the operator differential equations \er{eq:op-Q}, \er{eq:op-R}, \er{eq:op-Q-n}, \er{eq:op-R-n} exhibit unique (mild) solutions 
\begin{align}
	&  \op{Q}\in C_0([0,\tau_2];\bo(\cX))\,, \ 	
	\op{Q}_n\in C([0,\tau_2];\bo(\cX))\,,
	\nn\\
	& \op{R}\in C_0([0,\tau_2];\Sigma(\cX))\,, \ 
	\op{R}_n\in C([0,\tau_2];\Sigma(\cX))\,,
	\nn
\end{align}
satisfying $\op{Q}(0) = -\op{M} = \op{Q}_n(0)$, $\op{R}(0) = \op{M} = \op{R}_n(0)$ for all $n\in\N$. Furthermore, for all $x\in\cX$,
\begin{align}
	&
	\lim_{n\rightarrow\infty} \op{Q}_n(\cdot)\, x = \op{Q}(\cdot)\, x\,,
	\
	\lim_{n\rightarrow\infty} \op{R}_n(\cdot)\, x = \op{R}(\cdot)\, x\,,
	\label{eq:op-Q-R-limits}
\end{align}
with the limits defined with respect to the Banach space $(C([0,\tau_2];\cX),\, \|\cdot\|_{C([0,\tau_2]; \cX)})$.
\end{theorem}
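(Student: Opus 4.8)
The plan is to exploit the decoupled structure: equation \er{eq:op-Q} is \emph{linear} in $\op{Q}$ once $\op{P}$ is fixed, and \er{eq:op-R} defines $\op{R}$ by pure quadrature once $\op{Q}$ is known, so the whole argument reduces to a linear contraction for $\op{Q}$, $\op{Q}_n$, followed by elementary integration for $\op{R}$, $\op{R}_n$, with the Yosida limits treated last. First I would fix $\op{P},\op{P}_n$ from Theorem \ref{thm:existence-uniqueness-P} (so $\|\op{P}\|_{C[0,\tau]},\sup_n\|\op{P}_n\|_{C[0,\tau]}\le r$ and the $n$-independent bound $M_T$ of \er{eq:M-T} are available), and consider on $C([0,\tau_2];\bo(\cX))$ the affine map $\op{O}\mapsto e^{\op{A}_n'\,\cdot}(-\op{M})+\int_0^{\cdot}e^{\op{A}_n'(\cdot-s)}\op{P}_n(s)\,\sigma\,\sigma'\,\op{O}(s)\,ds$ associated with \er{eq:op-Q-n-mild}. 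Exactly as in the proof of Theorem \ref{thm:existence-uniqueness-P}, but more simply because this map is affine rather than quadratic, both the self-map and contraction estimates collapse to the single smallness requirement $M_T\,\|\sigma\,\sigma'\|_{\bo(\cX)}\,r\,\tau_2<\tfrac12$; choosing $\tau_2\in(0,\tau]$ accordingly and a ball radius $\rho>2\,M_T\,\|\op{M}\|_{\bo(\cX)}$ yields, via the Banach Fixed Point Theorem, a unique $\op{Q}_n\in C([0,\tau_2];\bo(\cX))$ with $\op{Q}_n(0)=-\op{M}$. An identical $C_0$-argument gives $\op{Q}\in C_0([0,\tau_2];\bo(\cX))$, and global uniqueness follows from the same Gronwall step used for $\op{P}$. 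No self-adjointness is imposed on $\op{Q}$, consistent with the range $\bo(\cX)$.

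With $\op{Q}$ (respectively $\op{Q}_n$) in hand, I would then simply \emph{define} $\op{R}$ (respectively $\op{R}_n$) by \er{eq:op-R-mild} (respectively \er{eq:op-Q-R-n-mild}). Existence and uniqueness are immediate, (strong) continuity is inherited from that of the integrand $s\mapsto\op{Q}(s)'\,\sigma\,\sigma'\,\op{Q}(s)$, and the values lie in $\Sigma(\cX)$ because $\op{Q}(s)'\,\sigma\,\sigma'\,\op{Q}(s)=(\sigma'\,\op{Q}(s))'(\sigma'\,\op{Q}(s))\ge 0$ is self-adjoint and $\op{R}(0)=\op{M}\in\Sigma(\cX)$. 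This disposes of all four existence and uniqueness claims.

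The genuinely delicate step is the pair of limits \er{eq:op-Q-R-limits}. For $\op{Q}_n(\cdot)x\to\op{Q}(\cdot)x$ I would subtract \er{eq:op-Q-n-mild} from \er{eq:op-Q-mild}, telescope the integrand into a term $e^{\op{A}'(t-s)}\op{P}(s)\,\sigma\,\sigma'\,(\op{Q}_n(s)-\op{Q}(s))x$ and a term carrying $\op{Q}_n(s)$ against the convergent data $(e^{\op{A}_n'(t-s)}\op{P}_n(s)-e^{\op{A}'(t-s)}\op{P}(s))x$, and close by Gronwall as in Theorem \ref{thm:existence-uniqueness-P} and \cite[Lemma 2.1]{BDDM:07}, using the strong convergences $e^{\op{A}_n t}x\to e^{\op{A}t}x$ and $\op{P}_n(\cdot)x\to\op{P}(\cdot)x$. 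The hard part will be $\op{R}_n(\cdot)x\to\op{R}(\cdot)x$: after splitting $\op{Q}_n'\,\sigma\,\sigma'\,\op{Q}_n-\op{Q}'\,\sigma\,\sigma'\,\op{Q}=\op{Q}_n'\,\sigma\,\sigma'\,(\op{Q}_n-\op{Q})+(\op{Q}_n'-\op{Q}')\,\sigma\,\sigma'\,\op{Q}$, the second term requires \emph{strong} convergence of the \emph{adjoints} $\op{Q}_n(s)'\to\op{Q}(s)'$ on fixed vectors, which does not follow from strong convergence of $\op{Q}_n(s)$ (that gives only weak convergence of $\op{Q}_n(s)'$), and a naive Gronwall on the adjoint integral equation fails to close because the semigroup there multiplies $\op{Q}_n(s)'$ on the right and forces a different test vector at each step.

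To overcome this I would argue indirectly. Since the Yosida approximation satisfies $\op{A}_n'=(\op{A}')_n$, Theorem \ref{thm:Yosida} applied to the generator $\op{A}'$ furnishes strong convergence of \emph{both} semigroups $e^{\op{A}_n't}x\to e^{\op{A}'t}x$ and $e^{\op{A}_n t}x\to e^{\op{A}t}x$. Because $\tau_2$ and $r$ were fixed through the $n$-independent constant $M_T$, the Picard iterates generating $\op{Q}_n$ and $\op{Q}$ converge geometrically at a rate $\theta<\tfrac12$ independent of $n$; and each fixed iterate $\op{Q}_n^{(k)}(t)$ is a finite composition of uniformly bounded, strongly convergent factors ($e^{\op{A}_n't}$, $\op{P}_n$, $\sigma\,\sigma'$, integrated in $s$), so taking adjoints and using that a product of uniformly bounded strongly convergent operators converges strongly gives $\op{Q}_n^{(k)}(t)'w\to\op{Q}^{(k)}(t)'w$ for every fixed $w$, uniformly in $t$. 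A three-term estimate ($k$ large first, then $n$) upgrades this to $\op{Q}_n(t)'w\to\op{Q}(t)'w$ strongly for each fixed $w$, uniformly in $t\in[0,\tau_2]$. Dominated convergence applied to $\int_0^t(\op{Q}_n(s)'-\op{Q}(s)')\,\sigma\,\sigma'\,\op{Q}(s)x\,ds$ (integrand converging pointwise to $0$ and dominated by $2\,r\,\rho\,\|\sigma\,\sigma'\|_{\bo(\cX)}\,\|x\|$) then yields the $\op{R}_n$ limit uniformly in $t$, completing \er{eq:op-Q-R-limits}.
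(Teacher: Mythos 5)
Your proposal is correct, and it follows the strategy the paper intends: the paper's own proof of this theorem is simply the statement that it is ``similar to that of Theorem \ref{thm:existence-uniqueness-P} and is omitted,'' i.e.\ contraction mapping for existence, Gronwall for uniqueness, and a Yosida-limit argument as in \cite[Lemma 2.1]{BDDM:07}. Your step 1 (an affine contraction with the $n$-independent constants $M_T$, $r$, $b$, giving the single smallness condition $M_T\, b\, r\, \tau_2 < \tfrac12$) and step 2 (defining $\op{R}$, $\op{R}_n$ by quadrature, with self-adjointness from $\op{Q}(s)'\sigma\sigma'\op{Q}(s) = (\sigma'\op{Q}(s))'(\sigma'\op{Q}(s))$) are exactly the ``similar'' argument. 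Where your proposal goes beyond the paper is in recognizing that the limit $\op{R}_n(\cdot)\,x \to \op{R}(\cdot)\,x$ is \emph{not} a routine copy of the $\op{P}_n$ argument, because it needs strong convergence of the adjoints $\op{Q}_n(s)'$, which strong convergence of $\op{Q}_n(s)$ does not supply. Your resolution is sound: the identity $\op{A}_n' = (\op{A}')_n$ (immediate from $(\,(n\op{I}-\op{A})^{-1})' = (n\op{I}-\op{A}')^{-1}$) lets Theorem \ref{thm:Yosida} be applied to the generator $\op{A}'$; each Picard iterate's adjoint is a reversed product of uniformly bounded, strongly convergent factors and hence converges strongly (with the standard $\eps/3$ argument over compact vector sets hiding inside the integrals); and the $n$-independent geometric rate $\theta<\tfrac12$ closes the three-term estimate. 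Two minor remarks: first, your claim in step 2 that strong continuity of $\op{R}$ is ``inherited from that of the integrand'' silently uses strong continuity of $s\mapsto\op{Q}(s)'$, which is the very adjoint issue you defer to step 3 --- so either the adjoint machinery should be established before $\op{R}$ is defined, or \er{eq:op-R-mild} should first be read weakly, via $\langle y,\, \op{R}(t)\,x\rangle = \langle y,\,\op{M}\,x\rangle + \int_0^t \langle \sigma'\op{Q}(s)\,y,\,\sigma'\op{Q}(s)\,x\rangle\, ds$, which is manifestly well-defined; second, the convergence $e^{\op{A}_n t}x\to e^{\op{A}t}x$ of the semigroups (uniformly on compact intervals) is not literally among the assertions of Theorem \ref{thm:Yosida} and should be cited from the Hille--Yosida theory in \cite{P:83} or \cite{BDDM:07}, as the paper implicitly does for \er{eq:op-limit}. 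Neither point is a genuine gap.
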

\begin{proof}
The proof is similar to that of Theorem \ref{thm:existence-uniqueness-P} and is omitted.
\end{proof}
 
\subsection{Common horizon of existence of solutions}
For the remainder, it is convenient to define a common horizon $\tau^*\in\R_{>0}$ of existence for the unique mild solutions $\op{P}$, $\optilde{P}$ of the operator differential Riccati equation \er{eq:op-P}, $\op{Q}$, $\op{R}$ of the auxiliary operator differential equations \er{eq:op-Q}, \er{eq:op-R}, and $\op{P}_n$, $\op{Q}_n$, $\op{R}_n$ of the corresponding Yosida approximation operator differential equations \er{eq:op-P-n}, \er{eq:op-Q-n}, \er{eq:op-R-n}. In particular, with $\tau_1(\op{M},\optilde{M}) \doteq \tau_1\in\R_{>0}$ as fixed by applying Theorem \ref{thm:optilde-P-existence} for any $\optilde{M}\in\Sigma_\op{M}(\cX)$ with $\op{M}\in\Sigma(\cX)$ fixed as per Assumption \ref{ass:coercive}, and $\tau_2(\op{M}) \doteq \tau_2\in\R_{>0}$ as fixed by applying Theorem \ref{thm:existence-uniqueness-Q-R} for the same $\op{M}\in\Sigma(\cX)$, define
\begin{align}
	\tau^* = \tau^*(\op{M},\optilde{M})
	& \doteq \tau_1(\op{M},\optilde{M})\wedge \tau_2(\op{M})\in\R_{>0}\,,
	\label{eq:tau-star}
\end{align}
where $\wedge$ denotes the $\min$ operation. That is, Theorems \ref{thm:existence-uniqueness-P} and \ref{thm:existence-uniqueness-Q-R} guarantee existence of unique $\op{P}$, $\optilde{P}$, $\op{Q}$, $\op{R}$,  and $\op{P}_n$, $\op{Q}_n$, $\op{R}_n$ on $[0,\tau^*]\subset\R_{\ge 0}$.


\section{Max-plus dual space fundamental solution semigroup}
\label{sec:fund}
A max-plus dual space fundamental solution semigroup for the operator differential Riccati equation \er{eq:op-P} is constructed by exploiting the semigroup property that attends the dynamic programming evolution operator of a related optimal control problem. In particular, by employing the Legendre-Fenchel transform \cite{R:74} of the value functional of an optimal control problem that encapsulates the particular mild solution $\op{P}$ of the operator differential Riccati equation \er{eq:op-P} initialized with $\op{P}(0) = \op{M}$ of Assumption \ref{ass:coercive}, a max-plus integral operator is defined in a corresponding max-plus dual space. It is demonstrated that this max-plus integral operator defines the aforementioned fundamental solution semigroup for the operator differential Riccati equation \er{eq:op-P}, which allows the realization of any solution $\optilde{P}$ of \er{eq:op-P} initialized with $\optilde{P}(0) = \optilde{M}\in\Sigma_\op{M}(\cX)$ as per Theorem \ref{thm:optilde-P-existence}. This construction generalizes the finite dimensional case documented in \cite{M:08}, and the infinite dimensional cases of \cite{DM:11,DM:12} in that it does not assume an explicit representation for the operator-valued solution of the operator differential Riccati equation \er{eq:op-P}.


\subsection{Optimal control problem}
An optimal control problem is defined with respect to the mild solution of the abstract Cauchy problem \cite{BDDM:07,CZ:95,P:83}
\begin{align}
	\dot\xi(t)
	& = \op{A}\, \xi(t) + \sigma\, w(t)\,,
	\label{eq:dynamics}
\end{align}
where $\xi(t)\in\cX$ denotes the state at time $t\in[0,\tau^*]$, $\tau^*\in\R_{>0}$ as per \er{eq:tau-star}, evolved from an initial state $\xi(0) = x\in\cX$ in the presence of an input signal $w\in\Ltwo([0,t];\cW)$. A mild solution of the abstract Cauchy problem \er{eq:dynamics} on the time interval $[0,\tau^*]$ is any function $\xi\in C([0,\tau^*];\cX)$ that satisfies
\begin{align}
	\xi(t)
	& = e^{\op{A}\, t}\, \xi(0) + \int_0^t e^{\op{A}\, (t-s)} \, \sigma\, w(s) \, ds\,,
	\label{eq:mild-dynamics}
\end{align}
(see for example \cite[Definition 3.1, p.129]{BDDM:07}, and also Appendix \ref{app:integral-forms}), where $e^{\op{A}\, t}\in\bo(\cX)$ denotes the corresponding element of the $C_0$-semigroup of bounded linear operators generated by $\op{A}$. 

\begin{remark}
Continuity of $\xi(\cdot)$ is in fact implied by \er{eq:mild-dynamics}, see for example \cite[Lemma 3.1.5, p.104]{CZ:95}. Indeed, given any $\xi(0) = x\in\cX$ and $w\in\Ltwo([0,t];\cW)$, the abstract Cauchy problem \er{eq:dynamics} has a unique {\em strong} solution which is also the mild solution (see for example \cite[Definition 3.1, Proposition 3.1, p.129--130]{BDDM:07}). 
\end{remark}

With the dynamics specified and interpreted as per \er{eq:dynamics} and \er{eq:mild-dynamics} respectively, the value functional $W^z:[0,\tau^*]\times\cX\mapsinto\R$ of the optimal control problem of interest is defined for each $z\in\cX$ by
\begin{align}
	W^z(t,x)
	& \doteq \sup_{w\in\Ltwo([0,t];\cW)} J^z(t,x;w)\,,
	\label{eq:value}
\end{align}
where the payoff $J^z:[0,\tau^*]\times\cX\times\Ltwo([0,\tau^*];\cW)\mapsinto\R$, $z\in\cX$ fixed, is defined with respect to the unique mild solution \er{eq:mild-dynamics} corresponding to $\xi(0) = x\in\cX$ by
\begin{align}
	J^z(t,x;w)
	& \doteq \int_0^t \demi \langle \xi(s),\, \op{C}\, \xi(s) \rangle - \demi \|w(s)\|^2 \, ds
	+ \psi(\xi(t),z).
	\label{eq:payoff}
\end{align}
Here, the terminal payoff $\psi(\cdot,z):\cX\mapsinto\R$ is defined with respect to the same operator $\op{M}\in\Sigma(\cX)$ of Assumption \ref{ass:coercive} by
\begin{align}
	\psi(x,z)
	& \doteq \demi\langle x-z,\, \op{M}\, (x-z) \rangle\,.
	\label{eq:terminal}
\end{align}
Solutions of the operator differential Riccati equation \er{eq:op-P}, and the auxiliary operator differential equations \er{eq:op-Q}, \er{eq:op-R}, are fundamentally related to the optimal control problem of \er{eq:value}. To explore and exploit this relationship, for each $x,z\in\cX$, $s\in[0,t]$, define
\begin{align}
	\op{F}_t(s) \, x
	& \doteq \sigma'\, (\op{P}(t - s)\, x + \op{Q}(t - s)\, z)\,,
	\label{eq:control}
\end{align}
where $\op{P}\in C_0([0,\tau^*];\Sigma(\cX))$ and $\op{Q}\in C_0([0,\tau^*];\bo(\cX))$ denote the unique mild solutions of \er{eq:op-P} and \er{eq:op-Q} satisfying $\op{P}(0) = \op{M}$ and $\op{Q}(0) = -\op{M}$ respectively, as per Theorems \ref{thm:existence-uniqueness-P} and \ref{thm:existence-uniqueness-Q-R}. The map \er{eq:control} can be regarded as a feedback for the abstract Cauchy problem \er{eq:dynamics}, yielding the closed-loop abstract Cauchy problem
\begin{align}
	\dot \xi(s) 
	& = \left( \op{A} + \sigma\, \op{F}_t(s) \right) \xi(s)\,, \quad s\in[0,t]\,,
	\label{eq:closed-loop-Cauchy}
\end{align}
where $\xi(0) = x\in\cX$ and $t\in[0,\tau^*]$.
\begin{theorem}
\label{thm:optimal}
Given any $t\in[0,\tau^*]$, the closed-loop abstract Cauchy problem \er{eq:closed-loop-Cauchy} has a unique mild solution $\xi^*\in C([0,t];\cX)$. Furthermore, the input $w^*\in C([0,t];\cW)$ defined by
\begin{align}
	w^*(s) 
	& \doteq \op{F}_t(s)\, \xi^*(s)
	= \sigma' (\op{P}(t - s)\, \xi^*(s) + \op{Q}(t - s)\, z)
	\label{eq:w-star}
\end{align}
is optimal with respect to \er{eq:value}, \er{eq:payoff}, with
\begin{equation}
	\begin{aligned}
	J^z(t,x;w)
	& \le J^z(t,x;w^*)
	= W^z(t,x) 
	\\
	& = \demi \langle x,\, \op{P}(t)\, x + \langle x,\, \op{Q}(t) \, z \rangle + \demi \langle z,\, \op{R}(t)\, z\rangle
	\end{aligned}
	\label{eq:optimal}
\end{equation}
for all $w\in\Ltwo([0,t];\cW)$, $x\in\cX$.
\end{theorem}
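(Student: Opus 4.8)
The plan is to combine a standard bounded-perturbation argument for the closed-loop dynamics with a completion-of-squares verification identity, the latter carried out rigorously at the level of the Yosida approximations and then passed to the limit. The finite-dimensional template is \cite{M:08}; the infinite-dimensional obstruction is that the formal manipulations there require differentiability that the mild solutions here need not possess.

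First I would settle existence and uniqueness of $\xi^*$. Substituting \er{eq:control} into \er{eq:dynamics} exhibits \er{eq:closed-loop-Cauchy} as the affine nonautonomous abstract Cauchy problem $\dot\xi(s) = (\op{A} + \sigma\sigma'\op{P}(t-s))\,\xi(s) + \sigma\sigma'\op{Q}(t-s)\, z$, in which $\sigma\sigma'\op{P}(t-\cdot)\in C_0([0,t];\bo(\cX))$ is a strongly continuous bounded perturbation of the generator $\op{A}$ and $s\mapsto\sigma\sigma'\op{Q}(t-s)\, z$ is a continuous $\cX$-valued inhomogeneity. Standard perturbation theory for $C_0$-semigroups together with the variation-of-constants formula then yields a unique mild solution $\xi^*\in C([0,t];\cX)$. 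Strong continuity of $\op{P}$ and $\op{Q}$, their uniform boundedness on $[0,\tau^*]$ (from the ball $B_{C[0,\tau]}(r)$ of Theorem \ref{thm:existence-uniqueness-P}), and continuity of $\xi^*$ then show, by an $\epsilon/2$ argument, that $w^*$ of \er{eq:w-star} lies in $C([0,t];\cW)\subset\Ltwo([0,t];\cW)$ and is hence admissible.

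The heart of the argument is the completion-of-squares identity. Because $\op{A}$ is unbounded and $\op{P}, \op{Q}, \op{R}, \xi$ are only mild (strongly continuous, nondifferentiable) solutions, the formal differentiation of $s\mapsto\langle\xi(s),\op{P}(t-s)\,\xi(s)\rangle$ cannot be performed directly. Instead, for an arbitrary admissible $w$ I would work with the Yosida-approximated dynamics $\dot\xi_n(s) = \op{A}_n\,\xi_n(s) + \sigma\, w(s)$ and the approximated operators $\op{P}_n, \op{Q}_n, \op{R}_n$ of \er{eq:op-P-n}, \er{eq:op-Q-n}, \er{eq:op-R-n}, for which all quantities are classical. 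Setting $V_n(s) \doteq \demi\langle\xi_n(s),\op{P}_n(t-s)\,\xi_n(s)\rangle + \langle\xi_n(s),\op{Q}_n(t-s)\, z\rangle + \demi\langle z,\op{R}_n(t-s)\, z\rangle$ and differentiating, the terms involving $\op{A}_n$ cancel and the Riccati and auxiliary equations collapse the remainder to the pointwise identity $\ddtone{V_n}{s} + [\demi\langle\xi_n,\op{C}\,\xi_n\rangle - \demi\|w\|^2] = -\demi\|w - u_n\|^2$, where $u_n(s)\doteq\sigma'(\op{P}_n(t-s)\,\xi_n(s) + \op{Q}_n(t-s)\, z)$. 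Integrating over $[0,t]$ and using $\op{P}_n(0)=\op{M}$, $\op{Q}_n(0)=-\op{M}$, $\op{R}_n(0)=\op{M}$, so that $V_n(t)=\psi(\xi_n(t),z)$, yields $J^z_n(t,x;w) = V_n(0) - \demi\int_0^t\|w - u_n\|^2\, ds$, where $J^z_n$ denotes the payoff evaluated along $\xi_n$.

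Finally I would pass to the limit $n\to\infty$. The strong convergences $\op{P}_n(\cdot)x\to\op{P}(\cdot)x$, $\op{Q}_n(\cdot)x\to\op{Q}(\cdot)x$, $\op{R}_n(\cdot)x\to\op{R}(\cdot)x$ of Theorems \ref{thm:existence-uniqueness-P} and \ref{thm:existence-uniqueness-Q-R}, together with the uniform operator bounds supplied by the common ball $B_{C[0,\tau]}(r)$ and the convergence $\xi_n\to\xi$ in $C([0,t];\cX)$ (inherited from $e^{\op{A}_n\cdot}\to e^{\op{A}\cdot}$ strongly via \er{eq:mild-dynamics}), yield the convergences $V_n(0)\to\demi\langle x,\op{P}(t)\, x\rangle + \langle x,\op{Q}(t)\, z\rangle + \demi\langle z,\op{R}(t)\, z\rangle$, $J^z_n(t,x;w)\to J^z(t,x;w)$, and $u_n\to u$ in $\Ltwo([0,t];\cW)$, where $u(s)\doteq\sigma'(\op{P}(t-s)\,\xi(s) + \op{Q}(t-s)\, z)$. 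The resulting identity $J^z(t,x;w) = \demi\langle x,\op{P}(t)\, x\rangle + \langle x,\op{Q}(t)\, z\rangle + \demi\langle z,\op{R}(t)\, z\rangle - \demi\int_0^t\|w - u\|^2\, ds$ gives $J^z(t,x;w)\le W^z(t,x)$ for every $w$, with equality precisely when $w(s) = \op{F}_t(s)\,\xi(s)$, i.e. for the feedback choice $w=w^*$ (under which $\xi=\xi^*$), thereby identifying the supremum in \er{eq:value} with the asserted quadratic form. The main obstacle is precisely this limit passage: the cross and quadratic terms must be controlled under only strong operator convergence of the approximants, which is where the uniform bounds from $B_{C[0,\tau]}(r)$ and the continuity of the limiting trajectory are essential.
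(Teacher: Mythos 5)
Your proposal follows essentially the same route as the paper's own proof: the paper likewise obtains $\xi^*$ via a bounded-perturbation/mild-solution argument (citing a modification of a proposition from its reference on infinite-dimensional Riccati equations), and its verification step defines precisely your $V_n$ (there called $\pi_n$), differentiates it along the Yosida-approximated dynamics, completes the square to produce the $-\demi\,\|w-u_n\|^2$ deficit term, integrates over $[0,t]$, passes to the strong-operator limit in $n$, and then takes the supremum over $w$. The only difference is cosmetic: you make the $n$-dependence of the feedback $u_n$ and the uniform bounds needed for the limit passage more explicit than the paper's terser presentation does.
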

\begin{proof}
Fix any $t\in[0,\tau^*]$, where $\tau^*\in\R_{>0}$ is as per \er{eq:tau-star}. The abstract Cauchy problem \er{eq:closed-loop-Cauchy} exhibits a unique mild solution $\xi^*\in C([0,t];\cX)$ via a straightforward modification of \cite[Proposition 6.1, p.409]{BDDM:07}. The fact that input $w^*$ defined by \er{eq:w-star} is optimal follows by a modification of the proof of \cite[Proposition 6.2, p.409]{BDDM:07}. In particular, let $\op{P}_n, \op{R}_n\in C([0,t];\Sigma(\cX))$, $\op{Q}_n\in C([0,t];\bo(\cX))$, denote the unique solutions of \er{eq:op-P-n}, \er{eq:op-Q}, \er{eq:op-R}, corresponding to the Yosida approximation $\op{A}_n$ of $\op{A}$ that exists on interval $[0,\tau^*]$ by Theorems \ref{thm:existence-uniqueness-P} and \ref{thm:existence-uniqueness-Q-R}. Similarly, let $\xi_n\in C([0,t];\cX)$ denote the unique solution of the abstract Cauchy problem \er{eq:dynamics} with $\op{A}$ replaced with $\op{A}_n$ for arbitrary $w\in\Ltwo([0,t];\cW)$. 
Define $\pi_n:[0,t]\mapsinto\R$ by
\begin{align}
	\pi_n(s)
	& \doteq p_n(s) + q_n(s) + r_n(s)
	\label{eq:pi}
\end{align}
where $p_n,q_n,r_n:[0,t]\mapsinto\R$ are given by
\begin{align}
	p_n(s)
	& \doteq \demi \langle \xi_n(s),\, \op{P}_n(t-s)\, \xi_n(s) \rangle\,,
	\label{eq:p}
	\\
	q_n(s)
	& \doteq \langle \xi_n(s),\, \op{Q}_n(t-s)\, z \rangle\,,
	\label{eq:q}
	\\
	r_n(s)
	& \doteq \demi \langle z,\, \op{R}_n(t-s)\, z \rangle\,.
	\label{eq:r}
\end{align}
Differentiating (formally) and applying \er{eq:op-P}, \er{eq:op-Q}, and \er{eq:op-R}, it is straightforward to show that
\begin{align}
	\dot{p}_n(s)
	& = 
	-\demi \langle \xi_n(s),\, \left[ \op{C} + \op{P}_n(t-s) \, \sigma\, \sigma' \, \op{P}_n(t-s) \right]\, \xi_n(s) \rangle
	\nn\\
	& \qquad + \langle w(s),\, \sigma' \op{P}_n(t-s)\, \xi_n(s) \rangle\,,
	\label{eq:p-dot}
	\\
	\dot{q}_n(s)
	& = - \langle \xi_n(s),\, \op{P}_n(t-s)\, \sigma\, \sigma' \, \op{Q}_n(t-s)\, z \rangle
	\nn\\
	& \qquad + \langle w(s),\, \sigma' \op{Q}_n(t-s)\, z \rangle\,,
	\label{eq:q-dot}
	\\
	\dot{r}_n(s)
	& = -\demi \langle z,\, \op{Q}_n(t-s)' \, \sigma\, \sigma' \, \op{Q}_n(t-s) \, z \rangle
	\label{eq:r-dot}
\end{align}
Define $\ol{w}(s) \doteq \sigma' \left( \op{P}_n(t-s) \, \xi_n(s) + \op{Q}_n(t-s)\, z \right)$ for all $s\in[0,t]$. Differentiation of \er{eq:pi}, substitution of \er{eq:p-dot}, \er{eq:q-dot}, \er{eq:r-dot}, followed by completion of squares, yields
\begin{align}
	\dot\pi_n(s)
	& = - \left[ \demi \langle \xi_n(s)\, \op{C}\, \xi_n(s) \rangle - \demi \|w(s)\|^2 \right]  - \demi \|w(s) - \ol{w}(s)\|^2
	\label{eq:pi-dot}
\end{align}
for all $s\in[0,t]$. As $\op{P}_n\in C([0,t];\Sigma(\cX))$ and $\op{Q}_n\in C([0,t];\bo(\cX))$, note that $\ol{w}\in C([0,t];\cW)\subset\Ltwo([0,t];\cW)$ by definition. Recalling that $\op{P}_n(0) = \op{M} = \op{R}_n(0)$ and $\op{Q}_n(0) = -\op{M}$, \er{eq:pi} implies that
\begin{align}
	\pi_n(t)
	& = \demi \langle \xi_n(t), \, \op{M} \, \xi_n(t) \rangle - \langle \xi_n(t),\, \op{M} \, z \rangle
			+ \demi \langle z,\, \op{M}\, z \rangle
	= \psi(\xi_n(t),z)\,,
	\label{eq:pi-t}
\end{align}
where $\psi$ is the terminal payoff \er{eq:terminal}. Similarly, as $\xi_n(0) = x$,
$
	\pi_n(0)
	= \demi \langle x,\, \op{P}_n(t) \, x \rangle + \langle x,\, \op{Q}_n(t)\, z \rangle + \demi \langle z,\, \op{R}_n(t)\, z\rangle
$.
Note that the limit as $n\rightarrow\infty$ of $\pi_n(0)$ is well-defined by the corresponding limits of $\op{P}_n$, $\op{Q}_n$, $\op{R}_n$ defined by the Yosida approximation, with
\begin{align}
	\pi_\infty(0)
	& \doteq \lim_{n\rightarrow\infty} \pi_n(0)
	= \demi\, \langle x,\, \op{P}(t)\, x \rangle + \langle x,\, \op{Q}(t)\, z \rangle + \demi\, \langle z,\, \op{R}(t)\, z \rangle\,.
	\label{eq:pi-infty}
\end{align}
Meanwhile, integrating \er{eq:pi-dot} with respect to $s\in[0,t]$ and applying \er{eq:pi-t} yields
$
	\pi_n(0)
	= \int_0^t \demi\langle \xi_n(s),\, \op{C}\, \xi_n(s) \rangle - \demi \|w(s)\|^2 \, ds + \psi(\xi_n(t),\, z)
	+ \demi \int_0^t \|w(s) - \ol{w}(s)\|^2\, ds
$.
Taking the limit as $n\rightarrow\infty$ and applying \er{eq:payoff} and the definition \er{eq:pi-infty} of $\pi_\infty(0)$ yields
$
	\pi_\infty(0) - \demi \int_0^t \|w(s) - \ol{w}(s)\|^2\, ds
	= J^z(t,x;w)
$.
Finally, taking the supremum over $w\in\Ltwo([0,t];\cW)$ and applying the right-hand equality in \er{eq:pi-infty} yields
\begin{align}
	W^z(t,x) & = \sup_{w\in\Ltwo([0,t];\cW)} J^z (t,x;w)
	\nn\\
	& = \pi_\infty(0)
	= \demi \langle x,\, \op{P}(t)\, x \rangle + \langle x,\, \op{Q}(t)\, z\rangle + \demi \langle z,\, \op{R}(t)\, z\rangle\,,
	\nn
\end{align}
in which the optimal input is $w^*= \ol{w}$, as per \er{eq:w-star}.
%
%
\end{proof}

Theorem \ref{thm:optimal} is crucial to the development of a max-plus fundamental solution to the operator differential Riccati equation \er{eq:op-P}. In particular, it demonstrates that the unique mild solution $\op{P}\in C_0([0,\tau^*];\Sigma(\cX))$ of \er{eq:op-P}, initialized with $\op{P}(0) = \op{M}$ of Assumption \ref{ass:coercive}, may be propagated forward in time via propagation of the value function $W^z(t,\cdot)$ of \er{eq:value} with respect to its time horizon $t\in[0,\tau^*]$. This is significant as propagation of $W^z(t,\cdot)$ is possible via the dynamic programming \cite{BCD:97,B:57} evolution operator. In particular, $W^z$ may be written as
\begin{align}
	W^z(t,x)
	& = (\op{S}_t \, \psi(\cdot,z))(x)
	\label{eq:value-op-S}
\end{align}
for all $t\in[0,\tau^*]$, $x\in\cX$, where $\op{S}_t$ denotes the aforementioned dynamic programming evolution operator. This operator is defined by
\begin{align}
	(\op{S}_t\, \Psi)(x)
	& \doteq \hspace{0mm} \sup_{w\in\Ltwo([0,t];\cW)} \left\{ 
							\int_0^t \demi\, \langle \xi(s),\, \op{C}\, \xi(s) \rangle - \demi\, \|w(s)\|^2 \, ds
							+ \Psi(\xi(t))
			\right\},
	\label{eq:op-S}
\end{align}
where $\xi(\cdot)$ is the unique mild solution of \er{eq:dynamics} satisfying $\xi(0) = x$. It satisfies the semigroup property
\begin{align}
	\op{S}_{t+s}
	& = \op{S}_s \, \op{S}_t = \op{S}_t\, \op{S}_s
	\label{eq:op-S-semigroup}
\end{align}
for all $s,t\in[0,\tau^*]$, $s+t\in[0,\tau^*]$, which in combination with \er{eq:value-op-S}, allows $W^z(t,\cdot)$ to be propagated to longer time horizons. 

\begin{remark}
\label{rem:semigroup}
Although the dynamic programming evolution operator of \er{eq:op-S} satisfies the semigroup property \er{eq:op-S-semigroup}, the horizon-indexed set of these operators $\left\{ \op{S}_t \, \bigl| \, t\in[0,\tau^*] \right\}$, equipped with the binary operation of operator composition, does not formally define a semigroup for $\tau^*<\infty$. In particular, note that
\begin{align}
	t>\tau^*/2
	& \quad\Longrightarrow\quad
	\op{S}_{t} \,\op{S}_{t} = \op{S}_{2\, t}\not\in \left\{ \op{S}_t \, \bigl| \, t\in[0,\tau^*] \right\}.
	\nn
\end{align}
However, it is possible to define a set of horizon-operator pairs, with an associated binary operation, that always defines a semigroup. In particular, consider a family of generic horizon-indexed operators $\{\op{F}_t\}_{t\in[0,\tau]}$ defined for some $\tau\in\R_{>0}$ and satisfying $\op{F}_0 = \op{I}$ (the identity operator) and the semigroup property
\begin{align}
	& \op{F}_{t+s} = \op{F}_s\, \op{F}_t = \op{F}_t\, \op{F}_s
	\label{eq:op-F-semigroup}
\end{align}
for all $s,t\in[0,\tau]$ for $\tau\in\R_{>0}$. Define the pair
\begin{align}
	\sigma_{\tau} \left(\op{F}\right) \doteq (\Gamma_\tau(\op{F}), \, \circ_\tau)
	\label{eq:sigma-semigroup}
\end{align}
where $\Gamma_\tau(\op{F})$ is a set of horizon-operator pairs, and $\circ_\tau:\Gamma_\tau(\op{F})\times\Gamma_\tau(\op{F})\mapsinto\Gamma_\tau(\op{F})$ is a binary operation, defined in turn by
\begin{align}
	\Gamma_{\tau}(\op{F})
	& \doteq \left\{ (t,\, \op{F}_t)\, \biggl| \, t\in[0,\tau]
	\right\},
	\nn\\
	(s,\, \op{F}_s) \circ_\tau (t,\, \op{F}_t)
	& \doteq \left\{ \ba{cl}
		(s + t,\, \op{F}_s\, \op{F}_t)\,,
		& s + t\in[0,\tau)\,,
		\\
		(\tau, \op{F}_{\tau})\,, 
		& s + t\in[\tau,\infty)\,.
	\ea \right.
	\nn
\end{align}
It is straightforward to show that the pair $\sigma_{\tau}(\op{F})$ of \er{eq:sigma-semigroup} defines a semigroup, as
\begin{align}
	& \theta\circ_\tau(\phi\circ_\tau\chi) = (\theta\circ_\tau\phi)\circ_\tau\chi\,,
	&& \forall \ \theta,\phi,\chi\in\Gamma_\tau(\op{F})\,.
	\nn
\end{align}
Furthermore, $\sigma_{\tau^*}(\op{F})$ also comes equipped with the identity element $\mathbf{1} \doteq (0,\, \op{F}_0)\in\Gamma_\tau$, so that $\mathbf{1} \circ \theta = \theta = \theta \circ \mathbf{1}$ for all $\theta\in\Gamma_\tau$.
In the special case of the dynamic programming evolution operator $\op{S}_t$ of \er{eq:op-S}, the dynamic programming principle described by \er{eq:op-S-semigroup} immediately implies that the pair $\sigma_{\tau^*}(\op{S})$ defines a semigroup via \er{eq:sigma-semigroup}.
\end{remark}


\subsection{Max-plus dual space representation of $W^z$}
The semigroup property \er{eq:op-S-semigroup} describes how the value functional $W^z(t,\cdot)$ of \er{eq:value} can be propagated from any initial horizon $t\in[0,\tau^*]$ to any final longer time horizon $t+s\in[0,\tau^*]$, $s\in[0,\tau^*-t]$, via dynamic programming. As this value functional is identified with the operator differential Riccati equation solution $\op{P}$ of \er{eq:op-P} via Theorem \ref{thm:optimal}, this value functional propagation corresponds to evolution of $\op{P}(t)$ from its initial condition $\op{P}(0) = \op{M}\in\Sigma(\cX)$ satisfying Assumption \ref{ass:coercive}. By appealing to semiconvex duality \cite{R:74} of the value functional, and max-plus linearity of the dynamic programming evolution operator, this evolution can be represented via a dual space evolution operator that is defined independently of the terminal payoff $\psi$ of \er{eq:terminal}, and hence the initial data $\op{M}\in\Sigma(\cX)$. The dual space evolution operator obtained is subsequently shown to propagate the solution $\optilde{P}(t)$ of the operator differential Riccati equation \er{eq:op-P} from any arbitrary initial condition $\optilde{P}(0) = \optilde{M}\in\Sigma_{\op{M}}(\cX)$ satisfying the conditions of Theorem \ref{thm:optilde-P-existence}. 

This development relies on concepts and results from convex and idempotent analysis. In particular, semiconvex duality \cite{R:74} is introduced using operators defined with respect to the max-plus algebra, c.f. \cite{M:06,M:08}, etc. The max-plus algebra is a commutative semifield over $\R^{-} \doteq \R\cup\{-\infty\}$ equipped with the addition and multiplication operations $\oplus$ and $\otimes$ that are defined by $a\oplus b \doteq \max(a,b)$ and $a\otimes b \doteq a + b$. It is also an idempotent semifield as $\oplus$ is an idempotent operation (i.e. $a\oplus a = a$) with no inverse. The respective spaces $\semiconvex{\op{K}}(\cX)$ and $\semiconcave{\op{K}}(\cX)$ of semiconvex and semiconcave functionals are defined with respect to $\op{K}\in\Sigma(\cX)$ by
\begin{align}
	\semiconvex{\op{K}}(\cX)
	& \doteq
	\left\{ f:\cX\mapsinto\R^- \ \left| \ \ba{c} 
		\text{$f$ closed,}
		\\
		f + \demi \, \langle \cdot, \, \op{K}\, \cdot \rangle 
	 	\text{ convex}
		\ea \right. 
	\right\}\,,
	\label{eq:semiconvex}
	\\
	\semiconcave{\op{K}}(\cX)
	& \doteq
	\left\{ f:\cX\mapsinto\R^- \ \left| \ \ba{c} 
		\text{$f$ closed,}
		\\
		f - \demi \, \langle \cdot, \, \op{K}\, \cdot \rangle  
	 	\text{ concave}
		\ea \right. 
	\right\}\,.
	\label{eq:semiconcave}
\end{align}
It may be shown that $\semiconvex{\op{K}}(\cX)$ is a max-plus vector space of functionals defined on $\cX$, see \cite{M:06} for the analogous details in the finite dimensional case. Semiconvex duality \cite{R:74} is formalized as follows, in which max-plus integration of a functional $f$ over $\cX$ is defined by $\int_{\cX}^{\oplus} f(z)\, dz\doteq \sup_{z\in\cX} f(z)$. 
\begin{theorem}
\label{thm:dual} 
Fix $\op{K}\in\Sigma(\cX)$ satisfying $\op{K}<-\op{M}$, where $\op{M}\in\Sigma(\cX)$ is as per Assumption \ref{ass:coercive}. Then, for any $\phi\in\skx$,
\begin{align}
	\phi
	& = \op{D}_{\psi}^{-1} \, a \in \skx\,,
	\quad
	a = \op{D}_{\psi}\, \phi \in \semiconcave{\op{K}}(\cX)\,,
	\label{eq:primal-dual}
\end{align}
where $\psi$ is the quadratic bi-functional \er{eq:terminal}, and $\op{D}_{\psi}$, $\op{D}_{\psi}^{-1}$ denote respectively the semiconvex dual and inverse dual operators \cite{R:74} defined by
\begin{align}
	\op{D}_{\psi} \, \phi  
	= ( \op{D}_{\psi} \, \phi )(\cdot)
	& \doteq - \int_{\cX}^{\oplus} \psi(x,\cdot)\otimes (-\phi(x)) \, dx\,,
	\label{eq:op-D}
	\\\displaybreak[1]
	\op{D}_{\psi}^{-1} \, a 
	= ( \op{D}_{\psi}^{-1} \, a)(\cdot)
	& \doteq \int_{\cX}^{\oplus} \psi(\cdot,z) \otimes a(z) \, dz\,.
	\label{eq:op-D-inverse}
\end{align}
\end{theorem}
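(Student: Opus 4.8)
The plan is to reduce this semiconvex duality statement to the classical Fenchel--Moreau biconjugation theorem for closed proper convex functionals on the Hilbert space $\cX$, following the template of the finite-dimensional case \cite{M:08} and the abstract duality of \cite{R:74}. The two operators $\op{D}_{\psi}$ and $\op{D}_{\psi}^{-1}$ are max-plus integral transforms against the quadratic kernel $\psi(x,z) = \demi\langle x-z,\, \op{M}\,(x-z)\rangle$; unravelling the definitions \er{eq:op-D}--\er{eq:op-D-inverse} of the max-plus operations gives $(\op{D}_{\psi}\phi)(z) = \inf_{x\in\cX}[\phi(x) - \psi(x,z)]$ and $(\op{D}_{\psi}^{-1} a)(x) = \sup_{z\in\cX}[\psi(x,z) + a(z)]$. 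The key observation is that, after subtracting the common quadratic $\demi\langle\cdot,\op{M}\,\cdot\rangle$, each transform becomes an ordinary Legendre--Fenchel transform, so the entire statement reduces to biconjugation of a single convex functional.

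Concretely, I would first set $h \doteq \phi - \demi\langle\cdot,\, \op{M}\,\cdot\rangle$. Since $\phi\in\skx$, the functional $\phi + \demi\langle\cdot,\op{K}\,\cdot\rangle$ is closed and convex by \er{eq:semiconvex}, and writing $h = [\phi + \demi\langle\cdot,\op{K}\,\cdot\rangle] - \demi\langle\cdot,(\op{K}+\op{M})\,\cdot\rangle$ exhibits $h$ as the sum of this closed convex functional and the quadratic $-\demi\langle\cdot,(\op{K}+\op{M})\,\cdot\rangle$, which is convex and coercive precisely because $\op{K} < -\op{M}$ forces $-(\op{K}+\op{M})$ to be coercive. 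Hence $h$ is closed, proper and strongly convex. Expanding $\psi$ and completing the square in $x$ then gives $(\op{D}_{\psi}\phi)(z) = -\demi\langle z,\op{M}\, z\rangle - h^{*}(-\op{M}\, z)$, while $(\op{D}_{\psi}^{-1} a)(x)$ collapses to $\demi\langle x,\op{M}\, x\rangle + h^{**}(x)$ after the linear change of variable $y = -\op{M}\, z$. Applying Fenchel--Moreau, $h^{**} = h$, yields $\op{D}_{\psi}^{-1}\op{D}_{\psi}\phi = \phi$, which is the first assertion. For the second assertion I would argue directly from $(\op{D}_{\psi}\phi)(z) = \inf_{x}[\phi(x) - \psi(x,z)]$: each integrand, as a function of $z$, is a quadratic whose second \Gateaux{} derivative is $-\op{M}$, so adding $\demi\langle\cdot,\op{M}\,\cdot\rangle$ renders each integrand affine in $z$. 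The pointwise infimum of affine functionals is closed and concave, which is exactly the semiconcavity underlying membership in $\semiconcave{\op{K}}(\cX)$ via \er{eq:semiconcave}, with finiteness and closedness of $a$ inherited from the strong convexity of $h$.

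The main obstacle is the infinite-dimensional biconjugation step, and in particular the honest verification that the supremum over $z\in\cX$ defining $\op{D}_{\psi}^{-1} a$ reproduces the full biconjugate $h^{**}$ even though the natural dual variable $y = -\op{M}\, z$ ranges only over the range of $\op{M}$, which need not be dense when $\op{M}$ is not boundedly invertible. This is exactly where the strict operator inequality $\op{K} < -\op{M}$ earns its keep: it supplies the coercive quadratic that makes $h$ strongly convex, so that $h^{*}$ is everywhere finite and the relevant suprema and infima are attained, allowing Fenchel--Moreau to be applied and the change of variable to be justified. A secondary, essentially bookkeeping, difficulty is to keep every transform consistent with the extended codomain $\R^{-} = \R\cup\{-\infty\}$ and with the closedness requirements built into the definitions \er{eq:semiconvex}--\er{eq:semiconcave} of $\skx$ and $\semiconcave{\op{K}}(\cX)$.
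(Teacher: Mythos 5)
Your overall route---complete the square in $\psi$, set $h \doteq \phi - \demi\langle\cdot,\op{M}\,\cdot\rangle$, and reduce both transforms to ordinary Legendre--Fenchel conjugation so that \er{eq:primal-dual} becomes the Fenchel--Moreau identity $h^{**}=h$---is exactly the reduction the paper compresses into its one-line proof (Lemma \ref{lem:quadratic} plus \cite[Theorem 5]{R:74}), and your formulas $(\op{D}_{\psi}\phi)(z) = -h^{*}(-\op{M}z) - \demi\langle z,\op{M}z\rangle$ and $(\op{D}_{\psi}^{-1}a)(x) = \demi\langle x,\op{M}x\rangle + \sup_{z}[\langle x,-\op{M}z\rangle - h^{*}(-\op{M}z)]$ are correct. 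The genuine gap is at the step you yourself flagged, and your proposed resolution of it fails. Strong convexity of $h$ (which $\op{K}<-\op{M}$ does give you) makes $h^{*}$ finite, continuous, and the infima attained, but it does nothing to enlarge the range of $\op{M}$: the supremum above ranges over $y\in\cR(\op{M})$ only, hence (by continuity of $h^{*}$) over the closed subspace $\overline{\cR(\op{M})}$, which can be a strictly smaller quantity than $h^{**}(x)=h(x)$ whenever $\overline{\cR(\op{M})}\neq\cX$. Concretely: $\cX=\R^{2}$, $\op{M}=\mathrm{diag}(1,0)$, $\op{K}=\mathrm{diag}(-2,-1)$ (so $-\op{M}-\op{K}=\op{I}>0$), $\phi(x)=x_{1}^{2}+x_{2}^{2}\in\skx$; then $h$ is strongly convex and all your finiteness/attainment claims hold, yet $(\op{D}_{\psi}\phi)(z)=-z_{1}^{2}$ and $(\op{D}_{\psi}^{-1}\op{D}_{\psi}\phi)(x)=x_{1}^{2}\neq\phi(x)$. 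What is actually needed is injectivity of $\op{M}$ (equivalently dense range, since $\op{M}$ is self-adjoint), so that continuity of $h^{*}$ lets you pass from $\cR(\op{M})$ to its closure $=\cX$. This is a hypothesis on $\op{M}$ alone; it is not implied by $\op{K}<-\op{M}$, nor by Assumption \ref{ass:coercive}, and it is the nondegeneracy that is built into the nonsingular kernel of the finite-dimensional antecedent \cite{M:08} (and that the paper's terse appeal to \cite[Theorem 5]{R:74} also uses silently). Your instinct to scrutinize this step was right; the fix you offer is not a fix.

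There is a second, smaller gap in the dual-class assertion. Your infimum-of-affine-functionals argument proves that $a+\demi\langle\cdot,\op{M}\,\cdot\rangle$ is closed and concave, i.e.\ $a\in\semiconcave{-\op{M}}(\cX)$. Membership in $\semiconcave{\op{K}}(\cX)$ with $\op{K}<-\op{M}$ does \emph{not} follow from this: $a-\demi\langle\cdot,\op{K}\,\cdot\rangle = \bigl[a+\demi\langle\cdot,\op{M}\,\cdot\rangle\bigr] + \demi\bigl\langle\cdot,\bigl(-(\op{K}+\op{M})\bigr)\,\cdot\bigr\rangle$ is a concave functional plus a coercive (hence convex) quadratic, which need not be concave. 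For instance $\cX=\R$, $\op{M}=1$, $\op{K}=-4$, $\phi(x)=2x^{2}\in\skx$ gives $a(z)=-\ts{\frac{2}{3}}z^{2}$, and $a(z)-\demi\op{K}z^{2}=\ts{\frac{4}{3}}z^{2}$ is convex, not concave. The containments run $\semiconcave{\op{K}}(\cX)\subset\semiconcave{-\op{M}}(\cX)$ for $\op{K}\le-\op{M}$, so what you proved is strictly weaker than what you claim at that point; your conclusion is consistent instead with the convention of Lemma \ref{lem:fundterm-dual}, which places semiconvex duals in $\semiconcave{\op{K}}(\cX)$ for $\op{K}+\op{M}>0$. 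In short: the natural statement your argument supports is $a\in\semiconcave{-\op{M}}(\cX)$ (equivalently $\semiconcave{\op{K}'}(\cX)$ for $\op{K}'\ge-\op{M}$), and the leap to $\semiconcave{\op{K}}(\cX)$ with $\op{K}<-\op{M}$ is a non sequitur as written.
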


\vspace{1mm}
\begin{proof}
\er{eq:primal-dual} follows by Lemma \ref{lem:quadratic} and \cite[Theorem 5]{R:74}.
\end{proof}

In order to demonstrate that the semiconvex dual of $\op{S}_{t}\, \psi(\cdot,z)$ is well-defined for each $t\in(0,\tau^*]$ and $z\in\cX$, define $\op{K}_t\in\Sigma(\cX)$ by $\op{K}_t \doteq - \alpha \, \op{P}(t) - (1 - \alpha) \, \op{M}$, with $\alpha\in(0,1)$ fixed, where $\op{P}(t)$ and $\op{M}$ are as per Assumption \ref{ass:coercive}. Identity \er{eq:value-op-S}, definition \er{eq:op-S}, and Theorem \ref{thm:optimal} imply that
\begin{align}
	\left( \op{S}_{t} \, \psi(\cdot,z) \right) (x) & + \demi \, \langle x,\, \op{K}_t \, x \rangle
	= W^{z}(t,x) + \demi \, \langle x,\, \op{K}_t \, x \rangle
	\nn\\
	& = \demi \, \left\langle x,\, \left( \op{P}(t) + \op{K}_t \right) \, x \right\rangle + 
	\langle x,\, \op{Q}(t) \, z \rangle + 
	\demi \, \langle z,\, \op{R}(t) \, z \rangle\,.
	\label{eq:convexity-check}
\end{align}
%
With $t\in(0,\tau^*]$, note that $\op{P}(t) > \op{M}$, so that $\op{P}(t) + \op{K}_t = (1 - \alpha) \left( \op{P}(t) - \op{M} \right) > 0$, and $-\op{K}_t - \op{M} = \alpha \, \left( \op{P}(t) - \op{M} \right) > 0$. That is, $\op{K}_t$ is self-adjoint and satisfies $-\op{P}(t) < \op{K}_t < -\op{M}$. Hence, the right-hand side of \er{eq:convexity-check} is the sum of a non-negative quadratic functional and an affine functional. As any non-negative quadratic functional is convex by assertion {\em (ii)} of Lemma \ref{lem:quadratic}, and any affine functional is convex by definition, the right-hand side of \er{eq:convexity-check} is also convex. Hence, 
\begin{align}
	\op{S}_{t} \, \psi(\cdot,z) \in \scf{\op{K}_{t}}{\cX}\,.
	\label{eq:convex-evolution}
\end{align}
for all $t\in(0,\tau^*]$. Also note that as $\op{S}_{t} \, \psi(\cdot,z)$ is closed by Theorem \ref{thm:optimal} and assertion {\em (i)} of Lemma \ref{lem:quadratic}. Consequently, Theorem \ref{thm:dual} implies that the semiconvex dual of $\op{S}_{t}\, \psi(\cdot,z)$ is well-defined for any $z\in\cX$. Denote this dual by the functional $B_{t}(\cdot,z):\cX\mapsinto\R$ for each $z\in\cX$ fixed, so that \er{eq:primal-dual} yields
\begin{align}
	\left( \op{S}_{t} \, \psi(\cdot,z) \right)(x)
	& = ( \op{D}_{\psi}^{-1} \, B_{t}(\cdot,z) ) \, (x)\,,
	\label{eq:primal-W}
	\\
	B_{t}(y,z)
	& = 
	( \op{D}_{\psi} \, \op{S}_{t} \, \psi(\cdot,z) ) \, (y)\,.
	\label{eq:dual-W}
\end{align}
for all $t\in(0,\tau^*]$, $x,y,z\in\cX$. Theorem \ref{thm:optimal} also ensures that an explicit quadratic form for the functional $B_{t}(\cdot,z)$ is inherited from $\op{S}_{t} \, \psi(\cdot,z)$, as formalized below.
\begin{lemma}
\label{lem:B-explicit}
$B_{t}:\cX\times\cX\mapsinto\R^-$ is a quadratic functional given explicitly by
\begin{align}
	B_{t}(y,z)
	& = \demi \langle y,\, \op{B}_{t}^{1,1}\, y\rangle + \langle z,\, \op{B}_{t}^{1,2}\, y \rangle + 
	\demi \langle z,\, \op{B}_{t}^{2,2}\, z \rangle
	\label{eq:B-explicit} 
\end{align}
for all $t\in(0,\tau^*]$, $y, z\in\cX$, where $\op{B}_{t}^{1,1},\, \op{B}_{t}^{2,2}\in\Sigma(\cX)$, $\op{B}_{t}^{1,2}\in\bo(\cX)$  are defined by
\begin{align}
	\op{B}_{t}^{1,1}
	& \doteq - \op{M} - \op{M} \left(\op{P}(t) - \op{M} \right)^{-1} \op{M}\,,
	\label{eq:op-B-11}
	\\
	\op{B}_{t}^{1,2}
	& \doteq - \op{Q}(t)' \left( \op{P}(t) - \op{M} \right)^{-1}\, \op{M}\,,
	\label{eq:op-B-12}
	\\
	\op{B}_{t}^{2,2}
	& \doteq - \op{Q}(t)' \left( \op{P}(t) - \op{M} \right)^{-1} \op{Q}(t) + \op{R}(t)\,.
	\label{eq:op-B-22}
\end{align}
\end{lemma}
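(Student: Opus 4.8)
The plan is to compute the semiconvex dual in \er{eq:dual-W} explicitly, using that both $\psi$ of \er{eq:terminal} and $\op{S}_{t}\,\psi(\cdot,z) = W^z(t,\cdot)$ of \er{eq:optimal} are known quadratics. Substituting the definition \er{eq:op-D} of $\op{D}_\psi$ gives
\[
B_{t}(y,z) = -\sup_{x\in\cX}\left[\, \psi(x,y) - \left(\op{S}_{t}\,\psi(\cdot,z)\right)(x) \,\right],
\]
and inserting \er{eq:terminal} and \er{eq:optimal} turns the bracketed quantity into a quadratic functional of $x\in\cX$ parametrized by $y,z$.

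First I would expand $\psi(x,y) = \demi\langle x,\op{M}\,x\rangle - \langle x,\op{M}\,y\rangle + \demi\langle y,\op{M}\,y\rangle$ and collect by powers of $x$. The quadratic-in-$x$ coefficient is $\op{M} - \op{P}(t) = -(\op{P}(t)-\op{M})$, the linear-in-$x$ term is $-\langle x,\, \op{M}\,y + \op{Q}(t)\,z\rangle$, and the remaining $\demi\langle y,\op{M}\,y\rangle - \demi\langle z,\op{R}(t)\,z\rangle$ is independent of $x$. For $t\in(0,\tau^*]$ the operator $\op{P}(t)-\op{M}$ is coercive and self-adjoint, exactly as used to establish \er{eq:convex-evolution}, so the functional being maximized is strictly concave and tends to $-\infty$ as $\|x\|\to\infty$; its supremum is therefore finite and attained at the unique stationary point given by the \Gateaux\ condition $(\op{M}-\op{P}(t))\,x = \op{M}\,y + \op{Q}(t)\,z$. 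Coercivity supplies the bounded inverse $(\op{P}(t)-\op{M})^{-1}$, so the maximizer is $x^{*} = -(\op{P}(t)-\op{M})^{-1}(\op{M}\,y + \op{Q}(t)\,z)$.

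Substituting $x^{*}$ back and completing the square yields
\[
B_{t}(y,z) = -\demi\langle \op{M}\,y+\op{Q}(t)\,z,\ (\op{P}(t)-\op{M})^{-1}(\op{M}\,y+\op{Q}(t)\,z)\rangle - \demi\langle y,\op{M}\,y\rangle + \demi\langle z,\op{R}(t)\,z\rangle.
\]
Expanding the first inner product and using self-adjointness of $\op{M}$ and of $(\op{P}(t)-\op{M})^{-1}$ together with the adjoint $\op{Q}(t)'$, I would collect the three terms: the $y$-quadratic gives $\op{B}_{t}^{1,1}$ of \er{eq:op-B-11}, the $y$--$z$ bilinear term gives $\op{B}_{t}^{1,2}$ of \er{eq:op-B-12}, and the $z$-quadratic gives $\op{B}_{t}^{2,2}$ of \er{eq:op-B-22}. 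Because every factor is bounded and $(\op{P}(t)-\op{M})^{-1}\in\bo(\cX)$, it follows that $\op{B}_{t}^{1,1},\op{B}_{t}^{2,2}\in\Sigma(\cX)$ and $\op{B}_{t}^{1,2}\in\bo(\cX)$, and that $B_{t}$ is a finite (hence $\R^-$-valued) quadratic as claimed.

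The hard part will be the infinite-dimensional justification that the supremum over $x\in\cX$ is genuinely attained, and is correctly characterized by the stationarity condition, rather than merely approached along a maximizing sequence. This is precisely where coercivity of $\op{P}(t)-\op{M}$ — and not mere positivity — is indispensable: it makes the negative-definite quadratic leading term dominate the linear term, so the functional is strictly concave and tends to $-\infty$ at infinity, which guarantees a unique Hilbert-space maximizer; it simultaneously furnishes the bounded inverse that renders both $x^{*}$ and the substituted value explicit. With attainment secured, the remaining work is careful but routine operator bookkeeping with adjoints and self-adjointness.
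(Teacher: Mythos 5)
Your proposal is correct, and the core algebra coincides with the paper's: both proofs write $B_t(y,z)$ as the negative of a supremum over $x$ of the quadratic $\psi(x,y)-W^z(t,x)$, identify the same quadratic/linear/constant split in $x$, locate the same maximizer $x^*=-(\op{P}(t)-\op{M})^{-1}(\op{M}\,y+\op{Q}(t)\,z)$, substitute, and collect terms to read off \er{eq:op-B-11}--\er{eq:op-B-22}. The genuine difference lies in how attainment and finiteness of the supremum are justified. The paper first establishes that $B_t(y,z)$ is finite because the semiconvex dual is well defined (via \er{eq:convex-evolution}, \er{eq:primal-W}, \er{eq:dual-W} and Theorem \ref{thm:dual}), and then invokes Lemma \ref{lem:pseudo}, which converts finiteness of the supremum of a quadratic functional into existence of a maximizer expressed through the Moore--Penrose pseudo-inverse; coercivity of $\op{P}(t)-\op{M}$ from Assumption \ref{ass:coercive} is used only at the end, to note that the pseudo-inverse is in fact the true bounded inverse. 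You instead argue attainment directly from coercivity: the functional is strictly concave and tends to $-\infty$ at infinity, and the unique stationary point (available because a coercive self-adjoint operator is boundedly invertible) is the global maximizer; equivalently, completion of squares with $(\op{P}(t)-\op{M})^{-1}$ bounds the functional above with equality exactly at $x^*$, which is fully rigorous and self-contained. Your route avoids both Lemma \ref{lem:pseudo} and the finiteness-from-duality step, which is a small economy here; the paper's route via Lemma \ref{lem:pseudo} is the one that carries over to the later computations (Corollary \ref{cor:propagate-fund} and Step 3 of Section \ref{sec:solve}), where the relevant operators need not be coercive, only a pseudo-inverse is available, and finiteness must indeed be inferred from duality rather than from a coercivity estimate.
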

\begin{proof}
With $\tau^*\in\R_{>0}$ fixed as per \er{eq:tau-star}, recall by \er{eq:convex-evolution} , \er{eq:primal-W} and \er{eq:dual-W} that $B_{t}(\cdot,z)$ is the well-defined dual of $\op{S}_{t}\, \psi(\cdot,z)$. In particular, $B_{t}(y,z)$ is finite for all $t\in(0,\tau^*]$, $y,z\in\cX$. Applying \er{eq:op-D}, \er{eq:dual-W}, and Theorem \ref{thm:optimal}, $B_{t}(y,z) = - \int_{\cX}^{\oplus} \pi_{t}^{y,z}(x)\, dx$
for all $t\in(0,\tau^*]$, $y,z\in\cX$, where
\begin{align}
	\pi_{t}^{y,z}(x) 
	& \doteq \psi(x,y) \otimes \left( - (\op{S}_{t} \, \psi(\cdot,z))(x) \right)
	\nn\\
	& = \demi \langle x - y,\, \op{M}\, (x - y) \rangle - \demi \langle x,\, \op{P}(t) \, x \rangle
	- \langle x,\, \op{Q}(t) \, z \rangle - \demi\langle z,\, \op{R}(t) \, z \rangle
	\nn\\
	& = \demi \langle x,\, (\op{M} - \op{P}(t))\, x \rangle + \langle x, \, -( \op{M} \, y + \op{Q}(t)\, z) \rangle
	+ \demi \langle y,\, \op{M} \, y \rangle - \demi \langle z,\, \op{R}(t) \, z \rangle
	\nn\\
	& = b(x) + \demi \langle y,\, \op{M} \, y \rangle - \demi \langle z,\, \op{R}(t) \, z \rangle\,,
	\nn
\end{align}
and $b:\cX\mapsinto\R$ is the quadratic functional defined by $b(x) \doteq \demi \langle x,\, (\op{M} - \op{P}(t))\, x \rangle + \langle x, \, -( \op{M} \, y + \op{Q}(t)\, z) \rangle$ for all $x\in\cX$. That is,
\begin{align}
	B_{t}(y,z) 
	& = - \demi\langle y,\, \op{M}\, y \rangle + \demi \langle z,\, \op{R}(t) \, z \rangle -
	\int_{\cX}^{\oplus} b(x) \, dx\,.
	\label{eq:B}
\end{align}
Note that $\op{M}-\op{P}(t)\in\Sigma(\cX)$, while $-( \op{M} \, y + \op{Q}(t)\, z)\in\cX$. As $B_{t}$ is finite as previously indicated, Lemma \ref{lem:pseudo} implies that the supremum in \er{eq:B} is attained at $x^{*} = - (\op{P}(t) - \op{M})^{-1} (\op{M} \, y + \op{Q}(t)\, z)$, with
\begin{align}
	\int_{\cX}^{\oplus} b(x)\, dx
	= 
	b(x^{*})
	& = \demi\langle \op{M} \, y + \op{Q}(t)\, z,\, (\op{P}(t) - \op{M})^{-1} (\op{M} \, y + \op{Q}(t)\, z) \rangle
	\nn\\
	& = \demi\langle y,\, \op{M} \, (\op{P}(t) - \op{M})^{-1} \op{M} \, y \rangle +
	\langle z,\, \op{Q}(t)' \, (\op{P}(t) - \op{M})^{-1} \, \op{M}\, y \rangle
	\nn\\
	& \hspace{3cm} + 
	\demi \langle z,\, \op{Q}(t)' \, (\op{P}(t) - \op{M})^{-1}\, \op{Q}(t)\, z \rangle\,,
	\nn
\end{align}
where it may be noted that the inverse (rather than the pseudo-inverse) $(\op{P}(t)-\op{M})^{-1}\in\Sigma(\cX)$ exists as $\op{P}(t)-\op{M}\in\Sigma(\cX)$ is coercive for all $t\in(0,\tau^*]$ by Assumption \ref{ass:coercive}, see \cite[Examples A.4.2 and A.4.3, p.609]{CZ:95}. So, recalling \er{eq:B},
\begin{align}
	B_{t}(y,z)
	= & \demi \langle y,\, -(\op{M} + \op{M} \, (\op{P}(t) - \op{M})^{-1} \op{M}) \, y \rangle
	+ \langle z,\, - \op{Q}(t)' \, (\op{P}(t) - \op{M})^{-1} \, \op{M}\, y \rangle
	\nn\\
	& \qquad\qquad\qquad 
	+ \langle z,\, (- \op{Q}(t)' \, (\op{P}(t) - \op{M})^{-1}\, \op{Q}(t) + \op{R}(t))\, z\rangle\,,
	\nn
\end{align}
which is as per \er{eq:B-explicit} via definitions \er{eq:op-B-11}, \er{eq:op-B-12}, \er{eq:op-B-22}. Hence, as $\op{M}\in\Sigma(\cX)$, $\op{Q}\in C_0([0,\tau^*];\bo(\cX))$ and $\op{R}\in C_0([0,\tau^*];\Sigma(\cX))$,  definitions \er{eq:op-B-11}, \er{eq:op-B-12} and \er{eq:op-B-22} imply that $\betaOO{t}, \, \betaTT{t}\in\Sigma(\cX)$ and $\betaOT{t}\in\bo(\cX)$ for all $t\in(0,\tau^*]$.
\end{proof}
%
%


\subsection{Fundamental solution semigroup}
The functional $B_t$ of \er{eq:B-explicit} may be used as the kernel in defining a max-plus integral operator $\opBmp{t}$ on the dual-space of functionals generated by the semiconvex dual operator $\op{D}_{\psi}$ of \er{eq:primal-dual}. Specifically,
\begin{align}
	\opBmp{t}\, a
	=
	\left( \opBmp{t} \, a \right)(\cdot)
	& \doteq 
	\int_{\cX}^{\oplus} B_{t}(\cdot,z) \otimes a(z) \, dz
	\label{eq:op-B}
\end{align}
for all $t\in(0,\tau^*]$. This operator will be identified as an element of the new max-plus dual space fundamental solution semigroup for the operator differential Riccati equation \er{eq:op-P}. To this end, fix any operator $\optilde{M}\in\Sigma_{\op{M}}(\cX)$ as per Theorem  \ref{thm:optilde-P-existence}, and define the quadratic functional $\fundterm:\cX\mapsinto\R$ by
\begin{align}
	\fundterm(x)
	& \doteq \demi\, \langle x,\, \opMtilde\, x \rangle\,.
	\label{eq:fundterm}
\end{align}
By replacing the terminal payoff $\psi$ of \er{eq:terminal} with $\fundterm$ in the value functional $W^z$ of \er{eq:value}, note that the unique solution $\optilde{P}$ of the operator differential Riccati equation \er{eq:op-P} initialized with $\optilde{P}(0) = \optilde{M}$ and defined on $[0,\tau^*]$ may be characterized in an analogous way to Theorem \ref{thm:optimal}. That is, $\optilde{P}(t)$ may be identified with the propagated value functional $\op{S}_{t}\, \fundterm$ for all $t\in[0,\tau^*]$. Furthermore, this value functional can be represented in terms of the max-plus integral operator $\opBmp{t}$ of \er{eq:op-B} for all $t\in(0,\tau^*]$, as summarized by the following theorem.
\begin{theorem}
\label{thm:fund-general}
The value functional $\op{S}_{t} \, \fundterm$ defined via evolution operator $\op{S}_{t}$ of \er{eq:op-S} and terminal payoff functional $\fundterm$ of \er{eq:fundterm} may be represented equivalently by
\begin{align}
	( \op{S}_{t}\, \fundterm ) (x) 
	& = \demi \langle x,\, \optilde{P}(t) \, x \rangle
	= (\op{D}_{\psi}^{-1} \, \opBmp{t} \, \op{D}_{\psi}\, \fundterm)(x)\,, \quad x\in\cX,
	\label{eq:Wtilde-dual}
\end{align}
for all $t\in(0,\tau^*]$, where $\optilde{P}$ is the solution of the operator differential Riccati equation \er{eq:op-P} satisfying $\optilde{P}(0) = \optilde{M}$ as per Theorem \ref{thm:optilde-P-existence}, and $\op{D}_{\psi}$, $\op{D}_{\psi}^{-1}$, $\opBmp{t}$ denote the semiconvex dual operators \er{eq:op-D}, \er{eq:op-D-inverse}, and the max-plus integral operator \er{eq:op-B}.
\end{theorem}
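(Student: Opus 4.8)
The plan is to prove the two equalities in \er{eq:Wtilde-dual} in turn. The first, $(\op{S}_{t}\fundterm)(x) = \demi\langle x,\, \optilde{P}(t)\, x\rangle$, follows by specializing the argument of Theorem \ref{thm:optimal} to the parameter-free quadratic terminal payoff $\fundterm$ of \er{eq:fundterm} and the initialization $\optilde{P}(0) = \optilde{M}$. Because $\fundterm$ carries no $z$-dependent or linear term, only the pure quadratic block is required: letting $\optilde{P}_{n}$ denote the Yosida-approximate solution of \er{eq:op-P-n} initialized at $\optilde{M}$, I would set $\demi\langle\xi_{n}(s),\, \optilde{P}_{n}(t-s)\, \xi_{n}(s)\rangle$, differentiate, and complete the square exactly as in \er{eq:p-dot}--\er{eq:pi-dot} (with the $\op{Q}_{n},\op{R}_{n}$ contributions absent). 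Integrating over $[0,t]$, passing $n\to\infty$ via the strong convergence of Theorem \ref{thm:existence-uniqueness-P}, and taking the supremum over $w\in\Ltwo([0,t];\cW)$ then yields $(\op{S}_{t}\fundterm)(x) = \demi\langle x,\, \optilde{P}(t)\, x\rangle$, with the optimal input the feedback analogue of \er{eq:w-star} and $\optilde{P}$ existing on $[0,\tau^*]$ by Theorem \ref{thm:optilde-P-existence}.

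For the second equality I would combine semiconvex duality with max-plus linearity of $\op{S}_{t}$. First, choosing $\op{K} = -\optilde{M}$ gives $\op{K}<-\op{M}$ (since $\optilde{M}\in\Sigma_{\op{M}}(\cX)$ renders $\optilde{M}-\op{M}$ coercive) and makes $\fundterm + \demi\langle\cdot,\, \op{K}\,\cdot\rangle$ identically zero, hence convex, so $\fundterm\in\skx$. Theorem \ref{thm:dual} then applies with $a_{0}\doteq\op{D}_{\psi}\fundterm\in\semiconcave{\op{K}}(\cX)$, yielding the primal representation
\begin{align}
	\fundterm(x)
	& = (\op{D}_{\psi}^{-1}\, a_{0})(x)
	= \int_{\cX}^{\oplus} \psi(x,z)\otimes a_{0}(z)\, dz
	= \sup_{z\in\cX}\,[\,\psi(x,z) + a_{0}(z)\,]\,.
	\nn
\end{align}

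Next, I would apply $\op{S}_{t}$ to this representation and invoke max-plus linearity: since the running cost in \er{eq:op-S} is independent of the terminal payoff, adding a constant to $\Psi$ shifts $\op{S}_{t}\Psi$ by the same constant, and $\op{S}_{t}$ distributes over the pointwise supremum (this being the elementary interchange $\sup_{w}\sup_{z} = \sup_{z}\sup_{w}$). Hence $(\op{S}_{t}\fundterm)(x) = \sup_{z\in\cX}[\,a_{0}(z) + (\op{S}_{t}\psi(\cdot,z))(x)\,]$. Substituting the dual representation $(\op{S}_{t}\psi(\cdot,z))(x) = \sup_{y\in\cX}[\,\psi(x,y) + B_{t}(y,z)\,]$ from \er{eq:primal-W} and interchanging the suprema over $y$ and $z$ gives
\begin{align}
	(\op{S}_{t}\fundterm)(x)
	& = \sup_{y\in\cX}\left[\,\psi(x,y) + \sup_{z\in\cX}\bigl(B_{t}(y,z) + a_{0}(z)\bigr)\right]
	= \sup_{y\in\cX}\left[\,\psi(x,y) + (\opBmp{t}\, a_{0})(y)\right],
	\nn
\end{align}
where the inner supremum is $(\opBmp{t}\, a_{0})(y)$ by definition \er{eq:op-B}. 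The outer expression is precisely $(\op{D}_{\psi}^{-1}\opBmp{t}\, a_{0})(x) = (\op{D}_{\psi}^{-1}\opBmp{t}\op{D}_{\psi}\fundterm)(x)$, which is the claim.

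The main obstacle I anticipate is the careful justification of the max-plus linearity step and the attendant interchange of suprema over the continuum $\cX$: one must confirm that $\op{S}_{t}$ genuinely commutes with an uncountable max-plus combination (not merely finite ones), and that all intermediate quantities stay finite, so that no $+\infty$ arises to invalidate the exchange or the definition of the duals. Here the quadratic, finite form of $B_{t}$ established in Lemma \ref{lem:B-explicit}, together with finiteness of $a_{0}=\op{D}_{\psi}\fundterm$ guaranteed once $\fundterm\in\skx$, are the tools that keep each supremum attained and finite. A secondary point requiring care is that $\opBmp{t}$ is built from $\op{P},\op{Q},\op{R}$ (initialized at $\op{M}$) and is therefore \emph{independent} of $\optilde{M}$; it is only the dual datum $\op{D}_{\psi}\fundterm$ that carries $\optilde{M}$, which is exactly the decoupling that makes $\opBmp{t}$ a fundamental solution.
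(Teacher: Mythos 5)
Your proposal is correct and follows essentially the same route as the paper's proof: the left-hand equality via the (omitted) analog of Theorem \ref{thm:optimal} for $\fundterm$, and the right-hand equality by expanding $\fundterm = \op{D}_{\psi}^{-1}\,\op{D}_{\psi}\,\fundterm$ inside the definition of $\op{S}_{t}$, interchanging the suprema over $w$ and $z$ (max-plus linearity), substituting \er{eq:primal-W} for $\op{S}_{t}\,\psi(\cdot,z)$, and interchanging suprema once more to recognize $\op{D}_{\psi}^{-1}\,\opBmp{t}\,\op{D}_{\psi}\,\fundterm$. The only cosmetic differences are that you verify $\fundterm\in\skx$ with the explicit choice $\op{K}=-\optilde{M}$ where the paper instead checks semiconvexity of $\op{S}_{t}\,\fundterm$ via $\optilde{K}_{t}$, and that the interchange of suprema you flag as delicate is in fact unconditional for extended-real-valued functions, so no additional justification is needed there.
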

\begin{proof}
Applying the (omitted) analog of Theorem \ref{thm:optimal} for the terminal payoff functional $\fundterm$ of \er{eq:fundterm}, the value functional $\op{S}_{t}\, \fundterm$ enjoys the explicit quadratic representation \er{eq:optimal} with $\op{P}$ replaced with $\optilde{P}$ and $z\equiv 0$. That is, the left-hand equality in \er{eq:Wtilde-dual} holds. Proceeding by an analogous argument to that generating \er{eq:convex-evolution}, define $\optilde{K}_{t} \doteq -\alpha\, \optilde{P}(t) - (1 - \alpha) \, \op{M}\in\Sigma(\cX)$ for any $\alpha\in(0,1)$, and note that it satisfies
$\optilde{P}(t) + \optilde{K}_{t} = (1 - \alpha) \, ( \optilde{P}(t) - \op{M} )> 0$ and $-\optilde{K}_{t} - \op{M} = \alpha \, ( \optilde{P}(t) - \op{M} ) > 0$ for all $t\in(0,\tau^*]$, where the inequalities follow by Theorem \ref{thm:optilde-P-existence}. That is, $\op{S}_{t}\, \fundterm \in \semiconvex{\optilde{K}_{t}}(\cX)$ for all $t\in(0,\tau^*]$. As $\op{S}_{t}\, \fundterm$ is a quadratic functional (as indicated above), it is closed via Lemma \ref{lem:quadratic}. Consequently, the semiconvex duals of both $\fundterm$ and $\op{S}_{t}\, \fundterm$ are well-defined by Theorem \ref{thm:dual}. Set $\tilde a \doteq \op{D}_{\psi}\, \fundterm$. Define $I_t:\cX\times\Ltwo([0,\tau^*];\cW)\mapsinto\R$ by
$$
	I_{t}(x;w)
	\doteq \int_{0}^{t} \demi \langle \xi(s),\, \op{C}\, \xi(s) \rangle - \demi \|w(s)\|^{2} \, ds 
$$
where $\xi(\cdot)$ is the mild solution \er{eq:mild-dynamics} of the abstract Cauchy problem \er{eq:dynamics} satisfying $\xi(0) = x\in\cX$.
Using max-plus integral notation, \er{eq:op-S}, \er{eq:op-D-inverse} and the definition of $\tilde a$ imply that
\begin{align}
	& ( \op{S}_{t} \, \fundterm )(x)
	= 
	\int_{\Ltwo([0,t];\cW)}^{\oplus} I_{t}(x;w) \otimes \fundterm(\xi(t))  \, dw
	= 
	\int_{\Ltwo([0,t];\cW)}^{\oplus} 
	I_{t}(x;w) \otimes  
	( \op{D}_{\psi}^{-1}\, \tilde a ) (\xi(t)) \, dw
	\nn\\
	& =
	\int_{\Ltwo([0,t];\cW)}^{\oplus}
	I_{t}(x;w) \otimes
	\int_{\cX}^{\oplus} \psi(\xi(t),z) \otimes \tilde a(z) \, dz \, dw
	\nn\\
	& = 
	\int_{\cX}^{\oplus}
	\int_{\Ltwo([0,t];\cW)}^{\oplus}
	I_{t}(x;w) \otimes \psi(\xi(t),z) \, dw \otimes \tilde a(z) \, dz
	= 
	\int_{\cX}^{\oplus}
	(\op{S}_{t}\, \psi(\cdot,z))(x) \otimes \tilde a(z) \, dz\,,
	\label{eq:mp-integral-1}
\end{align}
where the second last equality follows by swapping the order of the max-plus integrals (i.e. suprema), while the last equation follows by applying definition \er{eq:op-S} of $\op{S}_{t}$ with terminal payoff $\psi(\cdot,z)$ of \er{eq:terminal}. Subsequently applying \er{eq:op-D-inverse} and \er{eq:primal-W}, swapping the order of the max-plus integrals, and applying \er{eq:op-B} yields
\begin{align}
	& \int_{\cX}^{\oplus}
	(\op{S}_{t}\, \psi(\cdot,z))(x) \otimes \tilde a(z) \, dz
	= \int_{\cX}^{\oplus} (\op{D}_{\psi}^{-1} \, B_{t}(\cdot,z))(x) \otimes \tilde a(z) \, dz
	\nn\\
	& = \int_{\cX}^{\oplus} \int_{\cX}^{\oplus} \psi(x,y) \otimes B_{t}(y,z) \, dy \otimes \tilde a(z) \, dz
	= \int_{\cX}^{\oplus} \psi(x,y) \otimes \int_{\cX}^{\oplus} B_{t}(y,z) \otimes \tilde a(z) \, dz \, dy
	\nn\\
	& = \int_{\cX}^{\oplus} \psi(x,y) \otimes [ \opBmp{t}\, \tilde a ](y) \, dy
	= (\op{D}_{\psi}^{-1} \, \opBmp{t}\, \tilde a)(x)\,.
	\label{eq:mp-integral-2}
\end{align}
Hence, combining the \er{eq:mp-integral-1} and \er{eq:mp-integral-2} and recalling the definition of $\tilde a$ yields
\begin{align}
	( \op{S}_{t} \, \fundterm )(x)
	& = (\op{D}_{\psi}^{-1} \, \opBmp{t}\, \op{D}_{\psi}\, \fundterm)(x)
	\nn
\end{align}
for all $x\in\cX$, as required by the right-hand equality in \er{eq:Wtilde-dual}.
\end{proof}

The dual-space representation provided by Theorem \ref{thm:fund-general} allows the general solution $\optilde{P}$ of the operator differential Riccati equation \er{eq:op-P}, defined with respect to any initialization $\optilde{M}\in\Sigma_{\op{M}}(\cX)$ as per Theorem \ref{thm:optilde-P-existence}, to be represented in terms of the max-plus integral operator $\opBmp{t}$ of \er{eq:op-B}. However, $\opBmp{t}$ is defined via the max-plus kernel $B_{t}$ of \er{eq:dual-W}, \er{eq:B-explicit}, which is itself derived from the particular solution $\op{P}$ of the operator differential Riccati equation \er{eq:op-P} that is defined with respect to the initialization $\op{M}\in\Sigma(\cX)$ as per Assumption \ref{ass:coercive}. That is, Theorem \ref{thm:fund-general} provides a representation for the general solution of the operator differential Riccati equation \er{eq:op-P} in terms of a particular solution of the same equation. It implies the following commutation diagram:
\begin{align}
&
\begin{CD}
\optilde{M}	@>\text{\er{eq:fundterm}}>>	\fundterm	@>\op{S}_{t}>>		\op{S}_{t}\, \fundterm		@>\text{\er{eq:Wtilde-dual}}>>
																							\optilde{P}(t)	\\
			@.								@VV\op{D}_{\psi}V						@AA\op{D}_{\psi}^{-1}A		\\
			@. 			\op{D}_{\psi}\, \fundterm	@>\opBmp{t}>>	\opBmp{t}\, \op{D}_{\psi}\, \fundterm 
																				@.
\end{CD}
\label{eq:commute-diagram}
\end{align}
In subsequent applications of \er{eq:commute-diagram}, an explicit evaluation of $\op{D}_{\psi} \, \fundterm$ and $\op{D}_{\psi}\, \op{S}_{t}\, \fundterm$ is useful. These evaluations are provided in the following two lemmas.
\begin{lemma}
\label{lem:fundterm-dual}
Given the semiconvex dual operator $\op{D}_\psi$ of \er{eq:op-D}, the semiconvex dual $\op{D}_{\psi}\, \fundterm$ of the terminal payoff $\fundterm$ of \er{eq:fundterm} is given for all $z\in\cX$ by
\begin{align}
	(\op{D}_{\psi}\, \fundterm)(z)
	& = - \demi \, \langle z,\, \opNtilde\, z \rangle
	\label{eq:fundterm-dual}
\end{align}
where $\opNtilde\in\Sigma(\cX)$ is defined by
\begin{align}
	\opNtilde
	& \doteq \op{M} + \op{M} \, ( \opMtilde - \op{M} )^{-1} \, \op{M}
	= \op{M}\, (\opMtilde - \op{M})^{-1}\, \opMtilde\,,
	\label{eq:op-N-tilde}
\end{align}
with $\op{M}$, $\opMtilde$ as per Assumption \ref{ass:coercive} and Theorem \ref{thm:optilde-P-existence}, and satisfies $\op{D}_{\psi}\, \fundterm\in\semiconcave{\op{K}}(\cX)$ for any $\op{K}\in\Sigma(\cX)$ satisfying $\op{K}+\op{M} > 0$.
\end{lemma}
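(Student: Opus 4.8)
The plan is to unfold definition \er{eq:op-D} of the semiconvex dual operator $\op{D}_\psi$ applied to $\fundterm$, and to evaluate the resulting max-plus integral (a supremum) in closed form. Using the explicit forms \er{eq:terminal} of $\psi$ and \er{eq:fundterm} of $\fundterm$, I would first write
\begin{align}
	(\op{D}_\psi\,\fundterm)(z)
	& = - \int_{\cX}^{\oplus} \psi(x,z)\otimes(-\fundterm(x))\,dx
	\nn\\
	& = - \sup_{x\in\cX}\left[ \demi\langle x-z,\,\op{M}\,(x-z)\rangle - \demi\langle x,\,\opMtilde\,x\rangle \right],
	\nn
\end{align}
and then regroup the bracketed term as the quadratic-in-$x$ functional $\demi\langle x,\,(\op{M}-\opMtilde)\,x\rangle - \langle x,\,\op{M}\,z\rangle + \demi\langle z,\,\op{M}\,z\rangle$.

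The key observation is that, since $\opMtilde\in\Sigma_{\op{M}}(\cX)$ by hypothesis, the operator $\opMtilde-\op{M}$ is coercive and self-adjoint, hence boundedly invertible by the fact recorded after \er{eq:Sigma-bar}, and $\op{M}-\opMtilde<0$. Consequently the quadratic in $x$ is strictly concave, so its supremum is finite and attained. This is exactly the situation covered by Lemma \ref{lem:pseudo}, which I would invoke (as in the proof of Lemma \ref{lem:B-explicit}) to identify the maximiser $x^{*} = (\op{M}-\opMtilde)^{-1}\op{M}\,z = -(\opMtilde-\op{M})^{-1}\op{M}\,z$. Substituting $x^{*}$ back, using $(\op{M}-\opMtilde)\,x^{*} = \op{M}\,z$ and the self-adjointness of $(\opMtilde-\op{M})^{-1}$, the supremum collapses to $\demi\langle z,\,[\op{M}+\op{M}\,(\opMtilde-\op{M})^{-1}\op{M}]\,z\rangle$, whence $(\op{D}_\psi\,\fundterm)(z) = -\demi\langle z,\,\opNtilde\,z\rangle$ with $\opNtilde$ as in \er{eq:op-N-tilde}. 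The second equality in \er{eq:op-N-tilde} then follows from the algebraic identity $\op{M}+\op{M}\,(\opMtilde-\op{M})^{-1}\op{M} = \op{M}\,(\opMtilde-\op{M})^{-1}[(\opMtilde-\op{M})+\op{M}] = \op{M}\,(\opMtilde-\op{M})^{-1}\opMtilde$.

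It then remains to verify the membership claim. Since $(\opMtilde-\op{M})^{-1}>0$, the operator $\op{M}\,(\opMtilde-\op{M})^{-1}\op{M}$ is non-negative, so $\opNtilde\ge\op{M}$. Hence for any $\op{K}\in\Sigma(\cX)$ with $\op{K}+\op{M}>0$ one has $\opNtilde+\op{K}\ge\op{M}+\op{K}>0$, so that $(\op{D}_\psi\,\fundterm)(\cdot)-\demi\langle\cdot,\,\op{K}\,\cdot\rangle = -\demi\langle\cdot,\,(\opNtilde+\op{K})\,\cdot\rangle$ is concave; closedness of this quadratic functional follows from Lemma \ref{lem:quadratic}, giving $\op{D}_\psi\,\fundterm\in\semiconcave{\op{K}}(\cX)$ by \er{eq:semiconcave}.

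The main obstacle I anticipate is not the algebra, which is routine, but securing at the outset that $\opMtilde-\op{M}$ is coercive (hence invertible) and that $\op{M}-\opMtilde<0$, since these two facts simultaneously guarantee that the supremum defining the dual is attained (rather than $+\infty$) and that the inverse $(\opMtilde-\op{M})^{-1}$ appearing in $\opNtilde$ exists. Both are supplied by $\opMtilde\in\Sigma_{\op{M}}(\cX)$ together with the coercive-and-self-adjoint-implies-boundedly-invertible fact cited after \er{eq:Sigma-bar}, so no genuinely new estimate is required beyond a careful application of Lemma \ref{lem:pseudo}.
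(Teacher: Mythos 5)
Your proposal is correct and follows essentially the same route as the paper's proof: unfold $\op{D}_\psi$, regroup the exponent as a quadratic in $x$, use coercivity of $\opMtilde-\op{M}$ (from $\opMtilde\in\Sigma_{\op{M}}(\cX)$) to justify finiteness and invertibility, apply Lemma \ref{lem:pseudo} to locate the maximiser $x^{*}=-(\opMtilde-\op{M})^{-1}\op{M}\,z$, verify the algebraic identity for $\opNtilde$, and conclude concavity (plus closedness via Lemma \ref{lem:quadratic}) for the membership claim. The only cosmetic differences are your factoring of the identity in \er{eq:op-N-tilde} and your phrasing of the membership step as $\opNtilde\ge\op{M}$, both of which match the paper's argument in substance.
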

\begin{proof}
Applying \er{eq:op-D} to \er{eq:fundterm}, $(\op{D}_{\psi}\, \fundterm)(z) = - \int_{\cX}^{\oplus} \pi^{z}(x)\, dx$, where
\begin{align}
	\pi^{z}(x)
	& \doteq  
	\demi \langle x - z,\, \op{M}\, (x-z) \rangle
	- \demi \langle x,\, \opMtilde\, x \rangle
	\nn\\
	& =
	\demi \langle x,\, (\op{M} - \opMtilde) \, x \rangle + 
	\langle x,\, -\op{M}\, z \rangle + 
	\demi \langle z,\, \op{M}\, z \rangle
	\nn\\
	& 
	= b(x) + \demi \langle z,\, \op{M}\, z \rangle\,,
	\nn
\end{align}
where $b(x) \doteq \demi \langle x,\, (\op{M} - \opMtilde) \, x \rangle + \langle x,\, -\op{M}\, z \rangle$. That is,
\begin{align}
	(\op{D}_{\psi}\, \fundterm)(z)
	& = - \demi \langle z,\, \op{M}\, z \rangle - \int_{\cX}^{\oplus} b(x)\, dx\,.
	\label{eq:dual-fund-b}
\end{align}
Here, $\op{M} - \opMtilde\in\Sigma(\cX)$, while $-\op{M}\, z\in\cX$. Furthermore,  $\opMtilde>\op{M}$, as $\optilde{M}\in\Sigma_{\op{M}}(\cX)$ as per Theorem \ref{thm:optilde-P-existence}. Hence, $\sup_{x\in\cX} b(x) < \infty$, with Lemma \ref{lem:pseudo} requiring that the supremum in \er{eq:dual-fund-b} be attained at $x^{*} = -(\opMtilde - \op{M})^{-1} \, \op{M}\, z$, where invertibility of $\opMtilde - \op{M}$ follows by the coercivity condition $\optilde{M}\in\Sigma_{\op{M}}(\cX)$ specified in Theorem \ref{thm:optilde-P-existence}. Consequently, $\int_{\cX}^{\oplus} b(x)\, dx = b(x^{*}) = \demi \langle \op{M}\, z,\, (\opMtilde - \op{M})^{-1}\, \op{M}\, z \rangle$, so that \er{eq:dual-fund-b} implies that \er{eq:fundterm-dual} holds with $\opNtilde$ given by the left-hand equality in \er{eq:op-N-tilde}. Furthermore,
$
	\opNtilde = \op{M} + \op{M}\, (\opMtilde - \op{M})^{-1}\, \op{M}
	= \op{M} - \op{M} \, (\opMtilde - \op{M})^{-1}\, [ (\opMtilde - \op{M}) - \opMtilde]
	= \op{M} - \op{M} + \op{M} \, ( \opMtilde - \op{M})^{-1}\, \opMtilde 
$,
as per \er{eq:op-N-tilde}. Selecting any $\op{K}\in\Sigma(\cX)$ such that $\op{K} + \op{M} > 0$, and applying \er{eq:fundterm-dual}, 
$
	- [ (\op{D}_{\psi}\, \fundterm)(z) - \demi \langle z,\, \op{K} \, z \rangle ]
	= \demi \langle z,\, \op{M}\, (\opMtilde - \op{M})^{-1}\, \op{M}\, z \rangle
	+ \demi \langle z,\, ( \op{K} + \op{M} ) \, z \rangle
$
for all $z\in\cX$. By inspection, this functional is positive, and hence convex by Lemma \ref{lem:quadratic}. That is, $(\op{D}_{\psi}\, \fundterm)(z) - \demi \langle z,\, \op{K} \, z \rangle$ defines a concave functional, so that $\op{D}_{\psi}\, \fundterm\in\semiconcave{\op{K}}(\cX)$ by \er{eq:semiconcave}.
\end{proof}

\begin{lemma}
\label{lem:op-S-fundterm-dual}
Given the semiconvex dual operator $\op{D}_\psi$ of \er{eq:op-D}, the semiconvex dual $\op{D}_{\psi}\, \op{S}_{t}\, \fundterm$ of the value functional $\op{S}_{t}\, \fundterm$ of \er{eq:Wtilde-dual} corresponding to the terminal payoff $\fundterm$ of \er{eq:fundterm} is given for all $z\in\cX$ and $t\in(0,\tau^*]$ by
\begin{align}
	(\op{D}_{\psi}\, \op{S}_{t}\, \fundterm)(z)
	& = -\demi\, \langle z,\, \optilde{N}_{t} \, z \rangle
	\label{eq:op-S-fundterm-dual}
\end{align}
where $\optilde{N}_{t}\in\Sigma(\cX)$ is defined by
\begin{align}
	\optilde{N}_{t}
	& \doteq \op{M} + \op{M} \, ( \opPtilde(t) - \op{M} )^{-1}\, \op{M} 
	= \op{M} \, ( \opPtilde(t) - \op{M} )^{-1} \, \opPtilde(t)\,,
	\label{eq:op-N-tilde-t}
\end{align}
and $\op{M}\in\Sigma(\cX)$ is as per Assumption \ref{ass:coercive}.
\end{lemma}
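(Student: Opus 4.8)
The plan is to observe that $\op{S}_{t}\, \fundterm$ is itself a quadratic functional of exactly the same structural form as $\fundterm$, so the required dual can be obtained by reusing the Legendre--Fenchel computation of Lemma \ref{lem:fundterm-dual} under a single operator substitution. First I would invoke Theorem \ref{thm:fund-general} (equivalently, the analog of Theorem \ref{thm:optimal} for the terminal payoff $\fundterm$), whose left-hand equality in \er{eq:Wtilde-dual} gives the explicit representation $(\op{S}_{t}\, \fundterm)(x) = \demi\, \langle x,\, \opPtilde(t)\, x\rangle$ for all $x\in\cX$ and $t\in(0,\tau^*]$, where $\opPtilde$ is the solution of \er{eq:op-P} with $\opPtilde(0) = \opMtilde$. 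This functional is precisely $\fundterm(x) = \demi\, \langle x,\, \opMtilde\, x\rangle$ with $\opMtilde$ replaced by $\opPtilde(t)$.

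Next I would apply the dual operator $\op{D}_{\psi}$ of \er{eq:op-D} directly, writing $(\op{D}_{\psi}\, \op{S}_{t}\, \fundterm)(z) = -\int_{\cX}^{\oplus} \pi_t^z(x)\, dx$ with $\pi_t^z(x) \doteq \demi\, \langle x-z,\, \op{M}\, (x-z)\rangle - \demi\, \langle x,\, \opPtilde(t)\, x\rangle$. This integrand is verbatim the $\pi^z$ of the proof of Lemma \ref{lem:fundterm-dual} under the substitution $\opMtilde \mapsto \opPtilde(t)$. Completing the square exactly as there, the supremum is attained at $x^* = -(\opPtilde(t) - \op{M})^{-1}\, \op{M}\, z$, yielding $(\op{D}_{\psi}\, \op{S}_{t}\, \fundterm)(z) = -\demi\, \langle z,\, \optilde{N}_{t}\, z\rangle$ with $\optilde{N}_{t}$ given by the left-hand expression of \er{eq:op-N-tilde-t}; the equivalent right-hand expression then follows by the identical algebraic rearrangement used to establish \er{eq:op-N-tilde}.

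The only point requiring genuine verification is that the substitution is legitimate, namely that $\opPtilde(t) - \op{M}$ is coercive, and hence boundedly invertible, for every $t\in(0,\tau^*]$; this is what guarantees finiteness of the max-plus integral and existence of the maximizer $x^*$. This is exactly the content of Theorem \ref{thm:optilde-P-existence}, which asserts $\opPtilde \in C_0((0,\tau_1];\Sigma_{\op{M}}(\cX))$, combined with $\tau^* \le \tau_1$ from \er{eq:tau-star}, so that $\opPtilde(t)\in\Sigma_{\op{M}}(\cX)$ throughout the relevant horizon. This coercivity transfer is the main (and essentially the only) obstacle, and it is immediate from the cited existence result; everything else is the routine computation already performed in Lemma \ref{lem:fundterm-dual}.
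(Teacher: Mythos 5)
Your proposal is correct and follows essentially the same route as the paper: both take the quadratic representation $(\op{S}_{t}\,\fundterm)(x) = \demi\,\langle x,\,\opPtilde(t)\,x\rangle$ from \er{eq:Wtilde-dual}, apply $\op{D}_{\psi}$ of \er{eq:op-D}, and repeat the computation of Lemma \ref{lem:fundterm-dual} with $\opMtilde$ replaced by $\opPtilde(t)$, with coercivity of $\opPtilde(t)-\op{M}$ (Theorem \ref{thm:optilde-P-existence}) supplying invertibility. The only cosmetic difference is that the paper first verifies semiconvexity and closedness of $\op{S}_{t}\,\fundterm$ in order to invoke Theorem \ref{thm:dual} for well-definedness of the dual, whereas you obtain finiteness and attainment of the supremum directly from coercivity (via Lemma \ref{lem:pseudo}); either suffices for the stated formula.
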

\begin{proof}
As $\opPtilde(t) > \op{M}$ by Theorem \ref{thm:optilde-P-existence}, an analogous argument to that yielding \er{eq:convex-evolution} follows, with $\op{P}(t)$ replaced with $\opPtilde(t)$. Consequently, there exists a $\optilde{K}_{t}\in\Sigma(\cX)$ such that $\op{S}_{t}\, \fundterm\in\semiconvex{\optilde{K}_{t}}(\cX)$. Similarly, assertion (i) of Lemma \ref{lem:quadratic} and \er{eq:Wtilde-dual} imply that $\op{S}_{t}\, \fundterm$ is closed. Hence, the semiconvex dual $\op{D}_{\psi}\, \op{S}_{t}\, \fundterm$ is well-defined by Theorem \ref{thm:dual}. So, applying \er{eq:op-D} to \er{eq:Wtilde-dual} in an analogous fashion to the proof of Lemma \ref{lem:fundterm-dual} (i.e. replacing $\opMtilde$ with $\opPtilde(t)$ and noting that $\opPtilde(t) - \op{M}$ is coercive and hence invertible) yields \er{eq:op-S-fundterm-dual}.
\end{proof}

Theorem \ref{thm:fund-general} states that the value functional $\op{S}_{t}\, \fundterm$ of \er{eq:Wtilde-dual} may be identified with the  solution $\optilde{P}$ of the operator differential Riccati equation \er{eq:op-P} satisfying $\optilde{P}(0) = \optilde{M}$ for any $\optilde{M} \in\Sigma_{\op{M}}(\cX)$. Hence, $\optilde{P}$ may be propagated to longer time horizons via the dynamic programming evolution operator $\op{S}_t$ of \er{eq:op-S}. Furthermore, Theorem \ref{thm:fund-general} also states that this propagation can be represented in a max-plus dual space via the max-plus integral operator $\opBmp{t}$ of \er{eq:op-B}. Consequently, as $\op{S}_t$ satisfies the semigroup property \er{eq:op-S-semigroup}, it follows that $\opBmp{t}$ of \er{eq:op-B} inherits an analogous semigroup property that may be used to propagate $\optilde{P}$. In particular, the set and binary operation pair
\begin{align}
	\sigma_{\tau^*}(\op{B}^\oplus)
	& \doteq (\Gamma_{\tau^*}(\op{B}^\oplus), \, \circ_{\tau^*})
	\label{eq:dual-space-semigroup}
\end{align}
defined as per \er{eq:sigma-semigroup} can be regarded as the {\em max-plus dual space fundamental solution semigroup} for the operator differential Riccati equation \er{eq:op-P} on the interval $(0,\tau^*]$. This is formalized by the following theorem and corollary.
\begin{theorem}
\label{thm:propagate-fund}
The max-plus integral operator $\opBmp{t}$ of \er{eq:op-B} satisfies the semigroup property
\begin{align}
	\opBmp{\tau + t}\, \tilde a
	& = \opBmp{\tau}\, \opBmp{t} \, \tilde a
	\label{eq:propagate-fund}
\end{align}
for all $\tau,t\in(0,\tau^*]$, $t+\tau\in(0,\tau^*]$, where $\tilde a\doteq \op{D}_\psi\, \fundterm$ is defined via \er{eq:op-D} and \er{eq:fundterm}.
\end{theorem}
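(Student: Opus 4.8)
The plan is to reduce the asserted semigroup identity for $\opBmp{t}$ to the already-established semigroup property \er{eq:op-S-semigroup} of the dynamic programming evolution operator $\op{S}_{t}$, transported into the max-plus dual space by the conjugation relation of Theorem \ref{thm:fund-general}. The key observation is that Theorem \ref{thm:fund-general} applies not only to the fixed terminal payoff $\fundterm$ of \er{eq:fundterm}, but to \emph{any} quadratic terminal payoff $\demi\langle\cdot,\,\optilde{M}\,\cdot\rangle$ with $\optilde{M}\in\Sigma_{\op{M}}(\cX)$, since the kernel $B_{t}$ defining $\opBmp{t}$ via \er{eq:op-B} depends only on the distinguished solution $\op{P}$ (with $\op{P}(0)=\op{M}$) and not on the data being propagated. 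Because $\op{S}_{t}\,\fundterm = \demi\langle\cdot,\,\optilde{P}(t)\,\cdot\rangle$ with $\optilde{P}(t)\in\Sigma_{\op{M}}(\cX)$ for $t\in(0,\tau^*]$ by Theorem \ref{thm:optilde-P-existence}, the functional $\op{S}_{t}\,\fundterm$ is itself an admissible terminal payoff, so Theorem \ref{thm:fund-general} may be reapplied to it using the \emph{same} operators $\opBmp{\cdot}$.

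Carrying this out, I would first record two instances of Theorem \ref{thm:fund-general}. Applied with horizon $\tau+t$ and terminal data $\optilde{M}$ it gives $\op{S}_{\tau+t}\,\fundterm = \op{D}_{\psi}^{-1}\,\opBmp{\tau+t}\,\tilde a$; applied with horizon $\tau$ and terminal data $\optilde{P}(t)$ (i.e.\ terminal payoff $\op{S}_{t}\,\fundterm$) it gives $\op{S}_{\tau}\,(\op{S}_{t}\,\fundterm) = \op{D}_{\psi}^{-1}\,\opBmp{\tau}\,\op{D}_{\psi}\,(\op{S}_{t}\,\fundterm)$. I would then apply $\op{D}_{\psi}$ to the horizon-$t$ identity $\op{S}_{t}\,\fundterm = \op{D}_{\psi}^{-1}\,\opBmp{t}\,\tilde a$ and invoke the duality bijection of Theorem \ref{thm:dual} to obtain $\op{D}_{\psi}\,(\op{S}_{t}\,\fundterm) = \opBmp{t}\,\tilde a$. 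Substituting this into the horizon-$\tau$ identity and using \er{eq:op-S-semigroup} in the form $\op{S}_{\tau+t}\,\fundterm = \op{S}_{\tau}\,(\op{S}_{t}\,\fundterm)$ yields $\op{S}_{\tau+t}\,\fundterm = \op{D}_{\psi}^{-1}\,\opBmp{\tau}\,\opBmp{t}\,\tilde a$. Comparing the two expressions for $\op{S}_{\tau+t}\,\fundterm$ leaves $\op{D}_{\psi}^{-1}\,\opBmp{\tau+t}\,\tilde a = \op{D}_{\psi}^{-1}\,\opBmp{\tau}\,\opBmp{t}\,\tilde a$, whence cancelling the injective $\op{D}_{\psi}^{-1}$ gives \er{eq:propagate-fund}.

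The main obstacle, and the step demanding the most care, is the final cancellation of $\op{D}_{\psi}^{-1}$ together with the intermediate application of $\op{D}_{\psi}$: both require that the functionals involved genuinely lie in the semiconcave dual space on which $\op{D}_{\psi}$ and $\op{D}_{\psi}^{-1}$ are mutually inverse bijections, so that $\op{D}_{\psi}\op{D}_{\psi}^{-1}$ acts as the identity and $\op{D}_{\psi}^{-1}$ is injective. I would discharge this by verifying, via Lemma \ref{lem:op-S-fundterm-dual}, that $\opBmp{t}\,\tilde a$, $\opBmp{\tau+t}\,\tilde a$, and $\opBmp{\tau}\,\opBmp{t}\,\tilde a$ each coincide with the explicit quadratic dual $-\demi\langle\cdot,\,\optilde{N}_{\bullet}\,\cdot\rangle$ of a semiconvex value functional $\op{S}_{\bullet}\,\fundterm$, placing them in the regime where Theorem \ref{thm:dual} guarantees the required invertibility. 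One must also track the horizon bookkeeping, ensuring that $\tau,t,\tau+t\in(0,\tau^*]$ keeps every intermediate value functional (in particular the solution flowed from $\optilde{P}(t)$ over horizon $\tau$, which is $\optilde{P}(\tau+t)$ by uniqueness) within the scope of Theorems \ref{thm:optilde-P-existence} and \ref{thm:fund-general}.
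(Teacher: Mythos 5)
Your proposal is correct and follows essentially the same route as the paper's proof: both reduce \er{eq:propagate-fund} to the dynamic-programming semigroup property \er{eq:op-S-semigroup} by conjugating with $\op{D}_{\psi}$, $\op{D}_{\psi}^{-1}$ via Theorem \ref{thm:fund-general}, then cancel the duals. Indeed, you make explicit two points the paper leaves implicit, namely that Theorem \ref{thm:fund-general} is being reapplied to the propagated payoff $\op{S}_{t}\,\fundterm$ (admissible because $\optilde{P}(t)\in\Sigma_{\op{M}}(\cX)$ by Theorem \ref{thm:optilde-P-existence}), and that the collapse of $\op{D}_{\psi}\,\op{D}_{\psi}^{-1}$ and the final cancellation of $\op{D}_{\psi}^{-1}$ rest on the bijectivity guaranteed by Theorem \ref{thm:dual} (checked via Lemma \ref{lem:op-S-fundterm-dual}).
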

\begin{proof}
Applying the semigroup property \er{eq:op-S-semigroup} and Theorem \ref{thm:fund-general},
\begin{align}
	\op{D}_{\psi}^{-1} \, \opBmp{\tau+t}\, \op{D}_{\psi}\, \fundterm
	& = \op{S}_{\tau+t} \, \fundterm
	\nn\\
	& = \op{S}_{\tau} \, \op{S}_{t} \, \fundterm
	= \op{D}_{\psi}^{-1} \, \opBmp{\tau} \, \op{D}_{\psi}\, \op{D}_{\psi}^{-1} \, \opBmp{t} \, \op{D}_{\psi}\, \fundterm 
	=  \op{D}_{\psi}^{-1} \, \opBmp{\tau}\, \opBmp{t} \, \op{D}_{\psi}\, \fundterm\,,
	\nn
\end{align}
That is, $\op{D}_{\psi}^{-1}\, \opBmp{\tau+t} \, \tilde a = \op{D}_{\psi}^{-1}\, \opBmp{\tau} \, \opBmp{t} \, \tilde a$, where $\tilde a \doteq \op{D}_{\psi} \, \fundterm$. Applying the semiconvex dual operator $\op{D}_{\psi}$ of \er{eq:op-D} to both sides thus yields the semigroup property \er{eq:propagate-fund}.
\end{proof}

\begin{corollary} 
\label{cor:propagate-fund}
Given $\tilde a\doteq\op{D}_\psi\, \fundterm$ and the max-plus integral operators $\op{B}_\tau^\oplus$, $\op{B}_t^\oplus$ defined by \er{eq:op-B} for $\tau,t\in(0,\tau^*]$, $\tau+t\in(0,\tau^*]$, 
\begin{align}
	\opBmp{\tau}\, \opBmp{t} \, \tilde a = (\opBmp{\tau}\, \opBmp{t} \, \tilde a)(\cdot)
	& = \int_{\cX}^{\oplus} B_{\tau,t}(\cdot,z) \otimes \tilde a(z)\, dz
	\label{eq:op-B-B}
\end{align}
in which kernel $B_{\tau,t}:\cX\times\cX\mapsinto\R^-$ is defined by
\begin{align}
	B_{\tau,t}(y,z)
	& \doteq \demi \, \langle y,\, \betaOO{\tau,t}\, y \rangle + 
	\langle z,\, \betaOT{\tau,t}\, y \rangle + 
	\demi\, \langle z,\, \betaTT{\tau,t}\, z \rangle \,,
	\label{eq:op-B-B-kernel}
\end{align}
with $\betaOO{\tau,t},\, \betaTT{\tau,t}\in\Sigma(\cX)$, $\betaOT{\tau,t}\in\bo(\cX)$, defined with respect to $\betaOO{\star},\, \betaTT{\star}\in\Sigma(\cX)$, 
$\betaOT{\star} \in\bo(\cX)$, $\star\in\{t,\tau\}$, of \er{eq:op-B-11}, \er{eq:op-B-12}, \er{eq:op-B-22}, by
\begin{align}
	\betaOO{\tau, t}
	& \doteq
	\betaOO{\tau} - (\betaOT{\tau})' \, \left(\betaTT{\tau} + \betaOO{t} \right)^{+} \, \betaOT{\tau}\,,
	\label{eq:prop-11}
	\\
	\betaOT{\tau, t}
	& \doteq - \betaOT{t} \, \left( \betaTT{\tau} + \betaOO{t} \right)^{+} \, \betaOT{\tau}\,,
	\label{eq:prop-12}
	\\
	\betaTT{\tau, t}
	& \doteq \betaTT{t} - \betaOT{t} \, \left( \betaTT{\tau} + \betaOO{t} \right)^{+} \, (\betaOT{t})'\,,
	\label{eq:prop-22}
\end{align}
in which $(\cdot)^+$ denotes the Moore-Penrose pseudo-inverse (see Lemma \ref{lem:pseudo}).
\end{corollary}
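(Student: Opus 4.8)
The plan is to compute the composite kernel $B_{\tau,t}$ directly by carrying out the inner supremum over the intermediate variable. First, unfolding the two max-plus integral operators of \er{eq:op-B} as suprema, the composition is
\begin{align}
	(\opBmp{\tau}\, \opBmp{t}\, \tilde a)(y)
	& = \sup_{w\in\cX}\, \sup_{z\in\cX} \left[ B_{\tau}(y,w) + B_{t}(w,z) + \tilde a(z) \right]
	\nn
\end{align}
for all $y\in\cX$. As suprema over independent variables may be reordered, I would swap the order of the two suprema to isolate the inner supremum over $w$, thereby writing the composition in the form \er{eq:op-B-B}, \er{eq:op-B-B-kernel} with
\begin{align}
	B_{\tau,t}(y,z)
	& \doteq \sup_{w\in\cX} \left[ B_{\tau}(y,w) + B_{t}(w,z) \right]\,,
	\nn
\end{align}
provided this inner supremum is finite and quadratic in $(y,z)$.

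Next, I would substitute the explicit quadratic representation \er{eq:B-explicit} for both $B_{\tau}(y,w)$ and $B_{t}(w,z)$ and collect the $w$-dependent terms. Using self-adjointness of $\betaTT{\tau}$, $\betaOO{t}$ and the adjoint relation for $\betaOT{t}$, this exhibits the summand as a quadratic functional in $w$ with Hessian $\betaTT{\tau} + \betaOO{t}$ and linear coefficient $\betaOT{\tau}\, y + (\betaOT{t})'\, z$, plus the $w$-independent residual $\demi \langle y,\, \betaOO{\tau}\, y \rangle + \demi \langle z,\, \betaTT{t}\, z\rangle$.

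I would then apply Lemma \ref{lem:pseudo} to evaluate $\sup_{w\in\cX}$ of this quadratic functional, exactly as in the proof of Lemma \ref{lem:B-explicit}, the only difference being that the relevant operator $\betaTT{\tau} + \betaOO{t}$ need not be coercive here. Consequently the supremum is attained at
\begin{align}
	w^{*}
	& = -\left( \betaTT{\tau} + \betaOO{t} \right)^{+} \left( \betaOT{\tau}\, y + (\betaOT{t})'\, z \right)
	\nn
\end{align}
via the Moore-Penrose pseudo-inverse $(\cdot)^{+}$, with value $-\demi \langle \betaOT{\tau}\, y + (\betaOT{t})'\, z,\, (\betaTT{\tau} + \betaOO{t})^{+} ( \betaOT{\tau}\, y + (\betaOT{t})'\, z ) \rangle$. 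Expanding this bilinear form (again using self-adjointness of the pseudo-inverse and the adjoint relations) and comparing with \er{eq:op-B-B-kernel} recovers exactly \er{eq:prop-11}, \er{eq:prop-12}, and \er{eq:prop-22} as the coefficient operators. The asserted regularity $\betaOO{\tau,t},\, \betaTT{\tau,t}\in\Sigma(\cX)$ and $\betaOT{\tau,t}\in\bo(\cX)$ then follows from the corresponding properties established in Lemma \ref{lem:B-explicit} together with boundedness and self-adjointness of the pseudo-inverse.

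The main obstacle I anticipate is justifying finiteness and attainment of the inner supremum over $w$, so that Lemma \ref{lem:pseudo} applies; this amounts to verifying that $\betaTT{\tau} + \betaOO{t}$ is negative semidefinite and that the linear coefficient $\betaOT{\tau}\, y + (\betaOT{t})'\, z$ lies in its range. I expect this to be underpinned by Theorem \ref{thm:propagate-fund}, which already guarantees that $\opBmp{\tau}\, \opBmp{t}\, \tilde a = \opBmp{\tau+t}\, \tilde a$ is a well-defined finite quadratic functional (cf. Lemma \ref{lem:op-S-fundterm-dual}); it is precisely this degeneracy of $\betaTT{\tau} + \betaOO{t}$ that forces the appearance of the pseudo-inverse rather than a full inverse in \er{eq:prop-11}--\er{eq:prop-22}.
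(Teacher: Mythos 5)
Your proposal is correct and follows essentially the same route as the paper's proof: swap the order of the max-plus integrals, collect the intermediate-variable terms into a quadratic with Hessian $\betaTT{\tau}+\betaOO{t}$ and linear coefficient $\betaOT{\tau}\, y + (\betaOT{t})'\, z$, and evaluate the supremum via Lemma \ref{lem:pseudo}, whose pseudo-inverse maximizer yields \er{eq:prop-11}--\er{eq:prop-22}. The finiteness of that inner supremum, which you correctly trace to Theorem \ref{thm:propagate-fund} together with Lemma \ref{lem:op-S-fundterm-dual} and Theorem \ref{thm:fund-general}, is made precise in the paper by Lemma \ref{lem:op-O-finite}, which bounds the kernel pointwise via $B_{\tau,t}(y,z) \le (\opBmp{\tau}\, \opBmp{t}\, \tilde a)(y) - \tilde a(z) < \infty$.
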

\begin{proof}
Fix $\tilde a\doteq\op{D}_\psi\, \fundterm$ and $\tau,t\in(0,\tau^*]$, $\tau+t\in(0,\tau^*]$. Applying definition \er{eq:op-B},
\begin{align}
	\opBmp{\tau} \, \opBmp{t} \, \tilde a
	& =
	\int_{\cX}^{\oplus} B_{\tau}(\cdot,\xi) \otimes \left[ \int_{\cX}^{\oplus} B_{t}(\xi,z) \otimes \tilde a(z) \, dz \right] \, d\xi
	\nn\\
	& =
	\int_{\cX}^{\oplus} \left[ \int_{\cX}^{\oplus}  B_{\tau}(\cdot,\xi) \otimes B_{t}(\xi,z) \, d\xi \right] \otimes \tilde a(z) \, dz\,.
	\label{eq:op-B-B-form}
\end{align}
That is, \er{eq:op-B-B} holds with $B_{\tau,t}(y,z)\doteq\int_{\cX}^{\oplus}  B_{\tau}(y,\xi) \otimes B_{t}(\xi,z) \, d\xi$. Furthermore, Lemma \ref{lem:B-explicit} states that $B_{\tau}$ and $B_{t}$ are quadratic functionals with the explicit form \er{eq:B-explicit}, implying that the functional $\pi_{\tau,t}^{y,z}(\xi) \doteq B_{\tau}(y,\xi) \otimes B_{t}(\xi,z)$ is also quadratic. That is,
\begin{align}
	\pi_{\tau,t}^{y,z}(\xi)
	& =
	\demi\, \langle y,\, \betaOO{\tau}\, y\rangle + \langle \xi,\, \betaOT{\tau} \, y \rangle +
	\demi\, \langle \xi,\, \betaTT{\tau} \, \xi \rangle 
	\nn\\
	& \quad\quad
	+ \demi\, \langle \xi,\, \betaOO{t}\, \xi\rangle + \langle z,\, \betaOT{t} \, \xi \rangle +
	\demi\, \langle z,\, \betaTT{t} \, z \rangle
	\nn\\
	& = b(\xi) + \demi\, \langle y,\, \betaOO{\tau}\, y\rangle + 
	\demi\, \langle z,\, \betaTT{t} \, z \rangle\,,
	\nn
\end{align}
where 
\begin{align}
	b(\xi) 
	& \doteq \demi\, \langle \xi, \, (\betaTT{\tau} + \betaOO{t})\, \xi \rangle + 
	\langle \xi,\, \betaOT{\tau} \, y + (\betaOT{t})'\, z \rangle\,.
	\label{eq:op-B-B-b}
\end{align}
That is,
\begin{align}
	B_{\tau,t}(y,z)
	& = \demi\, \langle y,\, \betaOO{\tau}\, y\rangle + 
	\demi\, \langle z,\, \betaTT{t} \, z \rangle + \int_{\cX}^{\oplus} b(\xi) \, d\xi\,.
	\label{eq:B-B-integral}
\end{align}
In order to derive the form \er{eq:op-B-B-kernel} for $B_{\tau,t}$, the supremum on the right-hand side of \er{eq:B-B-integral} must be shown to be finite, and subsequently evaluated. To do this, first note that $B_{\tau,t}$ as written in \er{eq:B-B-integral} is defined entirely in terms of the unique solution $\optilde{P}$ of the operator differential Riccati equation \er{eq:op-P} initialized with $\optilde{P}(0) = \optilde{M}\in\Sigma_\op{M}(\cX)$ as per Theorem \ref{thm:optilde-P-existence}. As this solution exists on the interval $[0,\tau^*]$, Theorem \ref{thm:fund-general} and Lemma \ref{lem:op-S-fundterm-dual} imply that $\op{S}_{\tau+t}\, \fundterm$ and $\op{D}_{\psi}\, \op{S}_{\tau+t}\, \fundterm = \opBmp{\tau+t}\, \op{D}_{\psi}\, \fundterm$ are well-defined quadratic functionals (i.e. finite-valued everywhere on $\cX$) for all $\tau,t\in(0,\tau^*]$, $\tau+t\in(0,\tau^*]$. Indeed, these functionals are given explicitly by \er{eq:Wtilde-dual} and \er{eq:op-S-fundterm-dual}, with
\begin{align}
	(\op{S}_{\tau+t}\, \fundterm)(x)
	& = \demi \, \langle x,\, \opPtilde(\tau+t)\, x\rangle\,,
	\quad
	(\op{D}_{\psi}\, \op{S}_{\tau+t}\, \fundterm)(z)
	= -\demi \, \langle x,\, \optilde{N}_{\tau+t}\, z \rangle\,,
	\nn
\end{align}
where $\opPtilde(\tau+t)$ is as per Theorem \ref{thm:optilde-P-existence} and $\opNtilde_{\tau+t}$ is as per \er{eq:op-N-tilde-t}. So, Theorem \ref{thm:propagate-fund} states that $\opBmp{\tau}\, \opBmp{t}\, \tilde a = \opBmp{\tau+t}\, \tilde a = \op{D}_{\psi}\, \op{S}_{\tau+t}\, \fundterm$ is a well-defined quadratic functional, where $\tilde a \doteq \op{D}_{\psi}\, \fundterm$. That is, $\tilde a, (\opBmp{\tau}\, \opBmp{t})\, \tilde a:\cX\mapsinto\R$ are finite-valued everywhere on $\cX$. As $\opBmp{\tau}\, \opBmp{t}$ of \er{eq:op-B-B} is a max-plus integral operator of the form \er{eq:op-O} as per \er{eq:op-B-B}, Lemma \ref{lem:op-O-finite} implies that the kernel $B_{\tau,t}$ of $\opBmp{\tau}\, \opBmp{t}$ defined via \er{eq:op-B-B} and \er{eq:B-B-integral} must be finite-valued. Hence, the integral on the right-hand side of \er{eq:B-B-integral} must be finite. Recalling the definition \er{eq:op-B-B-b} of the quadratic functional $b:\cX\mapsinto\R$, observe that $\betaTT{\tau} + \betaOO{t}$ is self-adjoint by \er{eq:op-B-11} and \er{eq:op-B-22}, while $\betaOT{\tau}\, y + (\betaOT{t})'\, z\in\cX$. Hence, applying Lemma \ref{lem:pseudo}, the pseudo-inverse of $\betaTT{\tau} + \betaOO{t}$ must exist, with the supremum in \er{eq:B-B-integral} attained at
$\xi^{*} \doteq - ( \betaTT{\tau} + \betaOO{t} )^{+}\, (\betaOT{\tau} \, y + (\betaOT{t})'\, z)$. That is,
\begin{align}
	\int_{\cX}^{\oplus} b(\xi) \, d\xi
	& = b(\xi^{*})
	 = - \demi \left\langle \betaOT{\tau} \, y + (\betaOT{t})'\, z,\, 
	 \left( \betaTT{\tau} + \betaOO{t} \right)^{+}\, (\betaOT{\tau} \, y + (\betaOT{t})'\, z) \right\rangle\,.
	 \nn
\end{align}
Substituting this into \er{eq:B-B-integral} yields
\begin{align}
	B_{\tau,t}(y,z)
	& = \demi \left\langle y,\, \left[ \betaOO{\tau} 
	-(\betaOT{\tau})' \left( \betaTT{\tau} + \betaOO{t} \right)^{+}\, \betaOT{\tau} \right] y \right\rangle
	\nn\\
	& 
	\quad\quad 
	+ \left\langle z,\,
	\left[ - \betaOT{t} \left( \betaTT{\tau} + \betaOO{t} \right)^{+}\, \betaOT{\tau} \right] y \right\rangle
	\nn\\
	& 
	\quad\quad 
	+ \demi \left\langle z,\, 
	\left[ \betaTT{t} - \betaOT{t} \left( \betaTT{\tau} + \betaOO{t} \right)^{+} \, (\betaOT{t})' \right] z \right\rangle\,,
	\nn
\end{align}
which is as per \er{eq:op-B-B-kernel} via the operator definitions \er{eq:prop-11}, \er{eq:prop-12}, and \er{eq:prop-22}.
\end{proof}

The specific details of how the max-plus dual space fundamental solution semigroup \er{eq:dual-space-semigroup} may be applied to evaluate the solution $\optilde{P}(t)$, $t\in(0,\tau^*]$, of \er{eq:op-P} initialized with $\optilde{P}(0) = \optilde{M}\in\Sigma_\op{M}(\cX)$ follow in the next section.


\section{Solving the operator differential Riccati equation \er{eq:op-P}}
\label{sec:solve}
The remaining objective is to illustrate how solutions $\optilde{P}\in C_0([0,t];\Sigma_\op{M}(\cX))$ of the operator differential Riccati equation \er{eq:op-P} can be evaluated at some time $t\in(0,\tau^*]$ for any initialization $\optilde{P}(0) = \optilde{M}\in\Sigma_\op{M}(\cX)$ using the max-plus dual space fundamental solution semigroup \er{eq:dual-space-semigroup}. Three main steps are involved. First, the max-plus integral operator $\opBmp{\delta}$ is obtained for some incremental intermediate time 
\begin{align}
	\delta\doteq t / \kappa\in(0,\tau^*]\,, \quad \kappa\in\Z_{>0}\,,
	\label{eq:delta}
\end{align}
from the unique solution $\op{P}\in C_0([0,\tau^*];\Sigma(\cX))$ initialized with $\op{P}(0) = \op{M}\in\Sigma(\cX)$ as per Assumption \ref{ass:coercive}. Second, $\opBmp{t}$ is derived from $\opBmp{\tau}$ via the fundamental solution semigroup property of Theorem \ref{thm:propagate-fund} and Corollary \ref{cor:propagate-fund}. Third, $\opBmp{t}$ is applied to evaluate the solution $\opPtilde$ initialized with $\optilde{P}(0) = \opMtilde\in\Sigma_{\op{M}}(\cX)$ at time $t\in(0,\tau^*]$.

It is important to note that where solutions $\optilde{P}$ of the operator differential Riccati equation \er{eq:op-P} satisfying $\optilde{P}(0) = \opMtilde$ are to be evaluated at the same time $t\in(0,\tau^*]$ for a collection of initializations $\opMtilde\in\Sigma_{\op{M}}(\cX)$, the first two steps need only be performed once, whereupon step three can be repeated for each initialization $\optilde{M}$ of interest.

In the following three subsections, these three main steps are described separately. A summarized recipe for computing $\optilde{P}(t)$ from $\op{P}(\delta)$ is also provided. For reasons of brevity, an error analysis for this recipe is not included.


\subsection{Step 1 -- Obtaining $\opBmp{\delta}$ from $\op{P}({\delta})$} With $t\in(0,\tau^*]$ fixed, select $\kappa\in\Z_{>1}$ and define $\delta\in(0,\tau^*]$ as per \er{eq:delta}. Recall that the max-plus integral operator $\opBmp{\delta}$ of \er{eq:op-B} is defined via kernel $B_{\delta}$. Lemma \ref{lem:B-explicit} provides an explicit quadratic functional representation \er{eq:B-explicit} for $B_{\delta}$ in terms of $\betaOO{\delta},\, \betaTT{\delta}\in\Sigma(\cX)$, $\betaOT{\delta}\in\bo(\cX)$ of \er{eq:op-B-11}, \er{eq:op-B-12}, \er{eq:op-B-22}, with
\begin{align}
	\betaOO{\delta}
	& \doteq - \op{M} - \op{M} \left(\op{P}(\delta) - \op{M} \right)^{-1} \op{M}\,,
	\ooer{eq:op-B-11}
	\\
	\betaOT{\delta}
	& \doteq - \op{Q}(\delta)' \left( \op{P}(\delta) - \op{M} \right)^{-1}\, \op{M}\,,
	\ooer{eq:op-B-12}
	\\
	\betaTT{\delta}
	& \doteq - \op{Q}(\delta)' \left( \op{P}(\delta) - \op{M} \right)^{-1} \op{Q}(\delta) + \op{R}(\delta)\,.
	\ooer{eq:op-B-22}
\end{align}
(Note that operators $\op{Q}(\delta)\in\bo(\cX)$ and $\op{R}(\delta)\in\Sigma(\cX)$ are uniquely determined by $\op{P}\in C_0([0,\tau^*];\Sigma(\cX))$ via Theorem \ref{thm:existence-uniqueness-Q-R}.) These operators completely describe the max-plus integral operator $\opBmp{\tau}$ via \er{eq:B-explicit} and \er{eq:op-B}.


\subsection{Step 2 -- Obtaining $\opBmp{t}$ from $\opBmp{\delta}$}
\label{sec:step-2}
The max-plus dual space fundamental solution semigroup \er{eq:dual-space-semigroup} of Theorem \ref{thm:propagate-fund} and Corollary \ref{cor:propagate-fund} provides a iterative mechanism for constructing $\opBmp{t}$ from $\opBmp{\delta}$. With a view to evaluating $\opBmp{t}\, \tilde a = \opBmp{\delta} \, \opBmp{\delta} \, \cdots \, \opBmp{\delta}\, \tilde a$ ($\kappa$ times) given $\tilde a \doteq \op{D}_{\psi}\, \fundterm$, select $t = (k-1)\, \delta$ with $\tau=\delta$ in \er{eq:prop-11}, \er{eq:prop-12}, \er{eq:prop-22}. This yields a linear iteration of the triple $(\betaOOhat{k},\, \betaOThat{k},\, \betaTThat{k})\in\bo(\cX)^3$ given by
\begin{align}
	\betaOOhat{k}
	& \doteq
	\betaOO{\delta} - (\betaOT{\delta})' \, \left(\betaTT{\delta} + \betaOOhat{k-1} \right)^{+} \, \betaOT{\delta}\,,
	\label{eq:lin-iter-11}
	\\
	\betaOThat{k}
	& \doteq - \betaOThat{k-1} \, \left( \betaTT{\delta} + \betaOOhat{k-1} \right)^{+} \, \betaOT{\delta}\,,
	\label{eq:lin-iter-12}
	\\
	\betaTThat{k}
	& \doteq \betaTThat{k-1} - \betaOThat{k-1} \, \left( \betaTT{\delta} + \betaOOhat{k-1} \right)^{+} \, (\betaOThat{k-1})'\,,
	\label{eq:lin-iter-22}
\end{align}
for $k=2,\cdots, \kappa$, initialized with 
\begin{align}
	(\betaOOhat{1},\, \betaOThat{1},\, \betaTThat{1})
	& = (\betaOO{\delta},\, \betaOT{\delta},\, \betaTT{\delta})
	\label{eq:lin-iter-IC}
\end{align}
via \er{eq:op-B-11}, \er{eq:op-B-12} and \er{eq:op-B-22}. These operators completely describe (via \er{eq:op-B-B} and \er{eq:op-B-B-kernel}) the max-plus integral operator $\opBmp{\delta}\, \opBmp{(k-1)\, \delta}$ for $k=2,\cdots \kappa$. The desired max-plus integral operator $\opBmp{t}$ follows from the $k=\kappa^{\text{th}}$ iterate
\begin{align}
	(\betaOO{t},\, \betaOT{t},\, \betaTT{t})
	& = (\betaOOhat{\kappa},\, \betaOThat{\kappa},\, \betaTThat{\kappa})\,.
	\label{eq:lin-iter-final}
\end{align}
For all iterates, Theorem \ref{thm:fund-general} states that $(\op{S}_{k\, \delta}\, \fundterm)(x) = (\op{D}_{\psi}^{-1}\, \opBmp{\delta}\, \opBmp{(k-1)\, \tau} \, \tilde a)(x) = \demi \, \langle x,\, \optilde{P}(k\, \delta)\, x \rangle$, where $\tilde a\doteq \op{D}_{\psi}\, \fundterm$. Hence, applying an argument analogous to that used in the proof of Corollary \ref{cor:propagate-fund}, existence of $\optilde{P}(k\,\delta)$ for each $k=1,\cdots,\kappa$ as provided by Assumption \ref{ass:coercive} guarantees that the operator iteration \er{eq:lin-iter-11}, \er{eq:lin-iter-12}, \er{eq:lin-iter-22} remains well-defined, subject to the aforementioned initialization \er{eq:lin-iter-IC}. In particular, the pseudo-inverses employed there must exist, and the operators must remain bounded and linear.

Other iterations are also possible. For example, as an alternative to a linear iteration, a time-step doubling iteration may be used to construct $\opBmp{t}$. Such a scheme requires fewer iterations (than the linear scheme illustrated above) to reach $t\in(0,\tau^*]$. The details of such iterations are omitted for brevity.


\subsection{Step 3 -- Obtaining $\optilde{P}(t)$ from $\opBmp{t}$ and $\opMtilde$}
Theorem \ref{thm:fund-general} and the commutation diagram \er{eq:commute-diagram} provide the mechanism for evaluating the solution $\optilde{P}$ of the operator differential Riccati equation \er{eq:op-P} satisfying $\optilde{P}(0) = \optilde{M}$ at time $t\in(0,\tau^*]$ for any $\optilde{M}\in\Sigma_{\op{M}}(\cX)$ via the max-plus integral operator $\opBmp{t}$ obtained in the previous step. In particular, \er{eq:Wtilde-dual} states that
\begin{align}
	\demi\, \langle x,\, \optilde{P}(t)\, x \rangle
	& = ( \op{D}_{\psi}^{-1}\, \opBmp{t}\, \tilde a)(x)\,,
	\quad\quad	
	\tilde a \doteq \op{D}_{\psi}\, \fundterm\,.
	\label{eq:step-3-a}
\end{align}
Recall that Lemma \ref{lem:fundterm-dual}, $\tilde a = \op{D}_{\psi}\, \fundterm = -\demi\, \langle z,\, \optilde{N}\, z \rangle$, with $\optilde{N}\in\Sigma(\cX)$ is as per \er{eq:op-N-tilde}. Applying $\opBmp{t}$ of the previous step to $\tilde a$ yields
\begin{align}
	(\opBmp{t}\, \tilde a)(y) 
	& = 
	\int_{\cX}^{\oplus} B_{t}(y,z)\otimes \tilde a(z)\, dz
	\nn\\
	& =
	\demi \, \langle y,\, \betaOO{t}\, y\rangle +
	\int_{\cX}^{\oplus} 
	\demi\, \langle z,\, (\betaTT{t} - \optilde{N}) \, z \rangle + 
	\langle z,\, \betaOT{t}\, y \rangle
	\, dz
	\nn
\end{align}
for all $y\in\cX$. An analogous argument to the proof of Corollary \ref{cor:propagate-fund} implies the existence of a finite right-hand side supremum. 
%
%
Consequently, Lemma \ref{lem:pseudo} implies that a bounded linear operator $\optilde{T}_{t}$ exists such that
\begin{align}
	(\opBmp{t}\, \tilde a)(y) 
	& = 
	\demi\, \langle y,\, \optilde{T}_{t}\, y \rangle\,,
	\quad\quad
	\optilde{T}_{t}
	\doteq \betaOO{t} - (\betaOT{t})' \left( \betaTT{t} - \optilde{N} \right)^{+} 
	\betaOT{t}\,.
	\nn
\end{align}
Applying the inverse semiconvex dual operator $\op{D}_{\psi}^{-1}$ of \er{eq:op-D-inverse} to obtain \er{eq:step-3-a},
\begin{align}
	(\op{D}_{\psi}^{-1}\, \opBmp{t}\, \tilde a)(x)
	& = \int_{\cX}^{\oplus} \psi(x,y) \otimes (\opBmp{t}\, \tilde a)(y) \, dy
	\nn\\
	& = 
	\demi\, \langle x,\, \op{M}\, x \rangle
	+ \int_{\cX}^{\oplus} \demi \, \langle y,\, (\optilde{T}_{t} + \op{M}) \, y \rangle
	+ \langle y,\, -\op{M}\, x \rangle \, dy\,.
	\nn
\end{align}
Again, an analogous argument to the proof of Corollary \ref{cor:propagate-fund} implies the existence of a finite right-hand side supremum. Consequently, Lemma \ref{lem:pseudo} implies that a bounded linear operator $\optilde{O}_{t}$ exists such that
\begin{align}
	(\op{D}_{\psi}^{-1}\, \opBmp{t}\, \tilde a)(x)
	& = 
	\demi \, \langle x,\, \optilde{O}_{t}\, x \rangle\,,
	\quad\quad
	\optilde{O}_{t}
	\doteq
	\op{M} - \op{M} \left( \optilde{T}_{t} + \op{M} \right)^{+} \op{M}\,.
	\label{eq:step-3-c}
\end{align}
Combining \er{eq:step-3-a} and \er{eq:step-3-c} implies that $\demi \, \langle x,\, \optilde{P}(t)\, x \rangle = \demi \, \langle x,\, \optilde{O}_{t}\, x \rangle$ for all $x\in\cX$, or
\begin{align}
	\opPtilde(t)
	& = \op{M} - \op{M} \left( \optilde{T}_{t} + \op{M} \right)^{+} \op{M}\,.
	\label{eq:construct-general}
\end{align}


\newcommand{\done}{\ding{182}}
\newcommand{\dtwo}{\ding{183}}
\newcommand{\dthree}{\ding{184}}
\newcommand{\dfour}{\ding{185}}
\newcommand{\dfive}{\ding{186}}

\subsection{Recipe}
\label{sec:recipe}
The solution $\optilde{P}$ of the operator differential Riccati equation \er{eq:op-P} initialized with $\optilde{P}(0) = \optilde{M}$ for any $\optilde{M}\in\Sigma_{\op{M}}(\cX)$ can be evaluated at any time within an interval of existence $(0,\tau^*]$ using the particular solution $\op{P}$ of the same equation initialized with $\op{P}(0) = \op{M}\in\Sigma(\cX)$ as specified in Assumption \ref{ass:coercive} via the following recipe:

\begin{center}
\vspace{2mm}
\hspace{-10mm}
\parbox[c]{13cm}{\centering
\begin{itemize}
\item[\done]
Select a time $t\in(0,\tau^*]$, $\tau^*\in\R_{>0}$ as per \er{eq:tau-star}, at which evaluation of the solution $\optilde{P}$ of the operator differential Riccati equation \er{eq:op-P} satisfying $\optilde{P}(0) = \optilde{M}$ for some $\optilde{M}\in\Sigma_{\op{M}}(\cX)$ is required. Fix iteration integer $\kappa\in\Z_{>1}$ and time $\delta \doteq t / \kappa$ as per \er{eq:delta}. Construct the bounded linear operators $\betaOO{\tau}$, $\betaOT{\tau}$ and $\betaTT{\tau}$ from the evaluation of the known particular solution $\op{P}(\tau)$ according to \er{eq:op-B-11}, \er{eq:op-B-12} and \er{eq:op-B-22}. 

\item[\dtwo]
Iterate the operator triple $(\betaOOhat{k},\, \betaOThat{k},\, \betaTThat{k})$ as per \er{eq:lin-iter-11}, \er{eq:lin-iter-12}, \er{eq:lin-iter-22} for $k=2, \cdots, \kappa$, subject to the initialization \er{eq:lin-iter-IC}, to obtain the final operator triple $(\betaOO{t},\, \betaOT{t},\, \betaTT{t})$ at time $t$ as per \er{eq:lin-iter-final}.

\item[\dthree] 
Select any initial condition $\opMtilde\in\Sigma_{\op{M}}(\cX)$. Evaluate the solution $\optilde{P}$ of the operator differential Riccati equation satisfying $\optilde{P}(0) = \optilde{M}$ at time $t$ via \er{eq:construct-general}.
\end{itemize}
\vspace{3mm}
}
\end{center}


\subsection{An illustrative example}
\label{sec:example}
A brief example is provided to illustrate an application of the recipe of Section \ref{sec:recipe} to the numerical evaluation of solutions of a specific operator differential Riccati equation of the form \er{eq:op-P}. With $\partial$ denoting differentiation, select 
\begin{align}
	\cX & \doteq \cW
	\doteq \Ltwo(\Lambda;\R)\,, \quad \Lambda \doteq (0,2)\,,
	\nn\\
	\op{A}\, x
	& \doteq -(2 + \partial)\, x\,,
	\
	x\in \dom(\op{A})
	\doteq \left\{ x\in\cX \, \left| \, \ba{c}
					x \text{ absolutely continuous}
					\text{on } \Lambda\cup\{0\},  \\
					x(0) = 0, \ \partial x\in\cX
				\ea \right. \right\},
	\nn\\
	\sigma\, x
	& \doteq \ts{\frac{1}{\sqrt{2}}} \, x\,, 
	\quad
	\op{C}\, x \doteq \ts{\frac{1}{3}} \int_\Lambda x(\zeta)\, d\zeta\,,
	\quad x\in\dom(\sigma) = \dom(\op{C}) = \cX\,.
	\nn
\end{align}
Attention is restricted to initializations $\op{M}\in\Sigma(\cX)$ and $\optilde{M}\in\Sigma_{\op{M}}(\cX)$ that assume an integral representation of the form 
\begin{align}
	\op{M}\, x = (\op{M}\, x)(\cdot)
	& = \int_\Lambda M(\cdot,\zeta)\, x(\zeta)\, d\zeta\,,
	\label{eq:op-integral}
\end{align}
in which $M\in\Ltwo(\Lambda^2;\R)$ denotes a kernel. Under this restriction, respective solutions of the operator differential Riccati equation \er{eq:op-P} enjoy the same integral representation, with time-indexed kernels denoted respectively by $P_t,\widetilde{P}_t\in\Ltwo(\Lambda^2;\R)$, $t\in\R_{\ge 0}$. Consequently, the operator differential Riccati equation \er{eq:op-P} may be equivalently represented via an integro-differential equation of the form 
\begin{align}
	\pdtone{P_t}{t}(\eta,\zeta)
	& = -4 \, P_t(\eta,\zeta) + \pdtone{P_t}{\eta}(\eta,\zeta) + \pdtone{P_t}{\zeta}(\eta,\zeta) + 
	\ts{\frac{1}{2}} \hspace{-1mm} \int_\Lambda \hspace{-1mm} P_t(\eta,\rho) \, P_t(\rho,\zeta)\, d\rho + \ts{\frac{1}{3}}
	\label{eq:integro-DE}
\end{align}
subject to the boundary and initial conditions
\begin{align}
	P_t(0,\zeta) = 0 = P_t(\eta,0)\,, \quad 
	P_0(\eta,\zeta) = M(\eta,\zeta)\,,
	\label{eq:BC-IC}
\end{align}
for all $t\in\R_{\ge 0}$, $\eta,\zeta\in\Lambda$. Equations \er{eq:integro-DE} and \er{eq:BC-IC} are solved via a textbook application of Runge-Kutta (RK45) on a fine grid to provide a benchmark for application of the recipe of Section \ref{sec:recipe} via representation \er{eq:op-integral}. 
(Note that as the specifics of numerical methods are not the main focus here, only a standard numerical method is employed. More advanced numerical methods may be found in, for instance, \cite{ABK:01,BW:89,BM:12,GR:88,GB:06,R:91}.) Approximation errors generated by the dual-space propagation of Theorem \ref{thm:propagate-fund} relative to the RK45 solution are illustrated in Figure \ref{fig:errors}. The computational advantage illustrated there is due to the application of the time-step doubling iteration alluded to in Section \ref{sec:step-2} in computing $\opBmp{t}$. For reasons of brevity, further details are omitted.

\begin{figure}[h]
\begin{center}
\epsfig{file=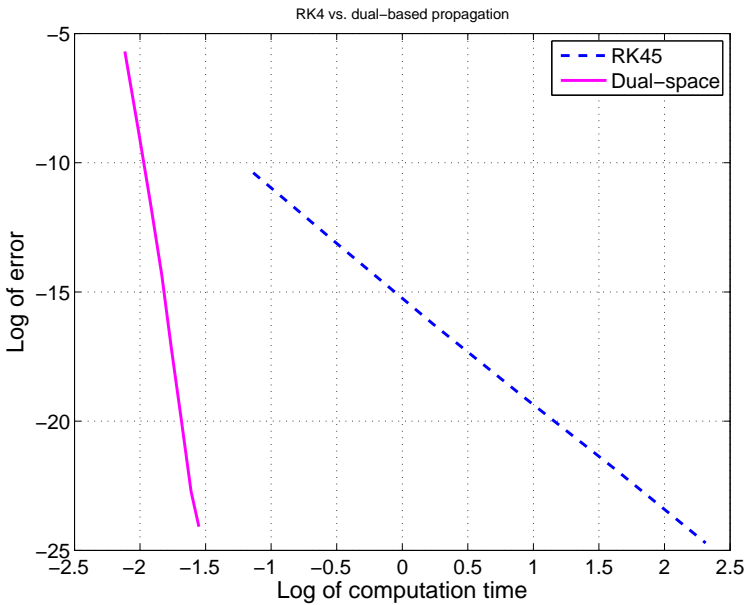,width=10.0cm}
\vspace{-2mm}
\caption{Approximation error versus computation time for standard (RK45) and dual-space propagation methods.}
\label{fig:errors}
\end{center}
\end{figure}


\section{Conclusion}
\label{sec:conc}
By exploiting a connection between an operator differential Riccati equation and a specific infinite dimensional optimal control problem, dynamic programming is employed to develop an evolution operator for propagating solutions of this equation. Examination of this evolution in a dual space, defined via semiconvexity and the Legendre-Fenchel transform, reveals the existence of a time indexed dual space operator that can be used to propagate the solution of the operator differential Riccati equation from any initial condition in a particular class. By demonstrating that these time indexed dual space operators inherit a semigroup property from dynamic programming, the set of such time indexed operators is shown to define a fundamental solution semigroup for the operator differential Riccati equation. 



\bibliographystyle{siam}
\bibliography{operator-Riccati}


\appendix


\section{Continuous and strongly continuous operators}
\label{app:spaces}

Some well-known facts concerning the continuity of operator-valued functions are recalled for completeness, see for example \cite{P:83,CZ:95,BDDM:07} and the references therein.

An operator-valued function $\op{F}:\R\mapsinto\bo(\cX)$ is continuous at $t_0\in\R$ if given $\eps\in\R_{>0}$ there exists an $\delta\in\R_{>0}$ such that
$
	|t-t_0|<\delta
	\Longrightarrow
	\| \op{F}(t) - \op{F}(t_0) \|_{\bo(\cX)} < \eps
$,
in which $\|\op{F}(t)\|_{\bo(\cX)} \doteq \sup_{\|x\|=1} \|\op{F}(t)\, x\|$ denotes the induced operator norm of $\op{F}(t)\in\bo(\cX)$, and $\|\cdot\|$ is the norm on $\cX$. An operator-valued function $\op{F}:\R\mapsinto\bo(\cX)$ is continuous on an interval $I\subset\R$ if it is continuous at every $t_0\in I$. The space of operator-valued functions $C(I;\bo(\cX))$ is defined as the space of all such continuous operator-valued functions defined on $I$.

Similarly, an operator-valued function $\op{F}:\R\mapsinto\bo(\cX)$ is strongly continuous on an interval $I\subset\R$ if, for every $x\in\cX$, the function $\op{F}(\cdot)\, x:I\mapsinto\cX$ is continuous. That is, for any $t_0\in I$ and $x\in\cX$, given $\eps\in\R_{>0}$, there exists a $\delta\in\R_{>0}$ such that
$
	|t - t_0| < \delta
	\Longrightarrow
	\| \op{F}(t)\, x - \op{F}(t_0) \, x\| < \eps
$,
in which $\|\cdot\|$ denotes the norm on $\cX$. The space of strongly continuous operator-value functions $C_0(I;\bo(\cX))$ is defined as the space of all such strongly continuous operator-valued functions defined on $I$. 

These definitions are employed in \er{eq:space-C} and \er{eq:space-C0}.

\begin{lemma}
\label{lem:spaces}
For any compact interval $I\subset\R$,
\begin{align}
	C(I;\bo(\cX)) \subset C_0(I;\bo(\cX)) \equiv \bo(\cX; C(I;\cX))\,,
	\label{eq:ordering}
\end{align}
in which $\| f \|_{C(I;\cX)} \doteq \sup_{t\in I} \| f(t) \|$.
\end{lemma}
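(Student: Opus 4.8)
The plan is to treat the inclusion and the identification $\equiv$ as two separate claims, the latter realized as an isometric isomorphism. The inclusion $C(I;\bo(\cX))\subseteq C_0(I;\bo(\cX))$ is immediate from submultiplicativity of the induced operator norm: given $\op{F}\in C(I;\bo(\cX))$, a point $x\in\cX$, and $t_0\in I$, one has $\|\op{F}(t)\,x-\op{F}(t_0)\,x\|\le\|\op{F}(t)-\op{F}(t_0)\|_{\bo(\cX)}\,\|x\|$, whose right-hand side tends to $0$ as $t\to t_0$ by continuity of $\op{F}$ in operator norm. Hence $\op{F}(\cdot)\,x$ is continuous for every $x\in\cX$, i.e. $\op{F}$ is strongly continuous.

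For the identification I would exhibit the natural linear map $\op{F}\mapsto\Phi_{\op{F}}$, where $\Phi_{\op{F}}:\cX\to C(I;\cX)$ is defined by $(\Phi_{\op{F}}\,x)(t)\doteq\op{F}(t)\,x$, and show it is a bijection onto $\bo(\cX;C(I;\cX))$. In the forward direction, fix $\op{F}\in C_0(I;\bo(\cX))$; strong continuity gives $\Phi_{\op{F}}\,x=\op{F}(\cdot)\,x\in C(I;\cX)$ for each $x$, and linearity in $x$ is clear. The one nontrivial point is boundedness of $\Phi_{\op{F}}$, and this is where the argument has content: for each fixed $x$ the map $\op{F}(\cdot)\,x$ is continuous on the \emph{compact} interval $I$ and hence bounded, so $\sup_{t\in I}\|\op{F}(t)\,x\|<\infty$. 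Thus $\{\op{F}(t)\}_{t\in I}$ is a pointwise-bounded family of bounded linear operators, and the uniform boundedness principle (Banach--Steinhaus) upgrades this to $C\doteq\sup_{t\in I}\|\op{F}(t)\|_{\bo(\cX)}<\infty$, whence $\|\Phi_{\op{F}}\,x\|_{C(I;\cX)}=\sup_{t\in I}\|\op{F}(t)\,x\|\le C\,\|x\|$ and $\Phi_{\op{F}}\in\bo(\cX;C(I;\cX))$.

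In the backward direction, given $\Phi\in\bo(\cX;C(I;\cX))$ I would set $\op{F}(t)\,x\doteq(\Phi\,x)(t)$; then $\|\op{F}(t)\,x\|\le\|\Phi\,x\|_{C(I;\cX)}\le\|\Phi\|\,\|x\|$ shows $\op{F}(t)\in\bo(\cX)$ with a bound uniform in $t$, while $\op{F}(\cdot)\,x=\Phi\,x\in C(I;\cX)$ shows $\op{F}\in C_0(I;\bo(\cX))$. These assignments are mutually inverse, and exchanging the order of the two suprema gives $\sup_{\|x\|=1}\sup_{t\in I}\|\op{F}(t)\,x\|=\sup_{t\in I}\|\op{F}(t)\|_{\bo(\cX)}$, so the correspondence is isometric and the symbol $\equiv$ is justified. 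I expect the main obstacle to be exactly the forward boundedness step: strong continuity by itself yields only pointwise bounds, and it is compactness of $I$ together with Banach--Steinhaus that converts these into the uniform operator bound required to land in $\bo(\cX;C(I;\cX))$.
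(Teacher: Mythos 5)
Your proposal is correct and follows essentially the same route as the paper: the inclusion via submultiplicativity of the operator norm, the forward direction by combining continuity on the compact interval $I$ (pointwise bounds) with the Uniform Boundedness Theorem to obtain $\sup_{t\in I}\|\op{F}(t)\|_{\bo(\cX)}<\infty$, and the backward direction by reading off boundedness and strong continuity from $\Phi\in\bo(\cX;C(I;\cX))$. Your closing remark on isometry of the correspondence is the content the paper defers to its Lemma \ref{lem:norms}, so nothing is missing.
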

\begin{proof}
Fix any $\op{F}\in C(I;\bo(\cX))$, $\eps\in\R_{>0}$, and $x\in\cX$, $\|x\|\ne 0$. Set $\eps_1 \doteq \eps/\|x\|$. As $\op{F}$ is continuous  by definition, there exists a $\delta(\eps_1)\in\R_{>0}$ such that 
$
	|t-t_0| < \delta(\eps_1)
	\Longrightarrow
	\| \op{F}(t) - \op{F}(t_0) \|_{\bo(\cX)} < \eps_1 
$.
However, as $\op{F}(t),\, \op{F}(t_0)\in\bo(\cX)$,
$
	\| \op{F}(t)\, x - \op{F}(t_0)\, x \|
	\le \| \op{F}(t) - \op{F}(t_0) \|_{\op{\cX}} \, \|x\| < \eps_1 \|x\| = \eps
$.
That is,
$
	|t - t_0| \le \delta(\eps / \|x\|)
	\Rightarrow
	\| \op{F}(t)\, x - \op{F}(t_0) \, x\| < \eps
$.
As $\eps\in\R_{>0}$ and $x\in\cX$, $\|x\|\ne 0$, are arbitrary, it follows by definition that $\op{F}$ is strongly continuous. As $\op{F}\in C(I;\bo(\cX))$ is arbitrary, the left-hand relation in \er{eq:ordering} follows immediately.

The right-hand equivalence may be proved by showing that $C_0(I;\bo(\cX))\subseteq\bo(\cX; C(I;\cX))$ and $\bo(\cX; C(I;\cX)) \subseteq C_0(I;\bo(\cX))$. To this end, first fix any $\op{F}\in C_0(I;\bo(\cX))$. As per \cite[p.387]{BDDM:07}, define $f(x)(t) \doteq \op{F}(t)\, x$, and note that $\op{F}$ is strongly continuous. Hence, given any $x\in\cX$, $t_0\in I$, $\eps\in\R_{>0}$, there exists a $\delta\in\R_{>0}$ such that
$
	|t - t_0| < \delta
	\Longrightarrow
	\| f(x)(t) - f(x)(t_0) \| < \eps
$.
That is, $f(x)\in C(I;\cX)$ for each $x\in\cX$. Consequently, for each $x\in\cX$, the function $\|f(x)(\cdot)\|:I\mapsinto\R_{\ge 0}$ must achieve a finite maximum $M_x\in\R_{\ge 0}$ on $I$ by the Extreme Value Theorem. That is, for each $x\in\cX$, $\|f(x)(t)\| = \|\op{F}(t)\, x\| \le M_x < \infty$ for all $t\in I$. Hence, the Uniform Boundedness Theorem (e.g. \cite[Theorem 4.7-3, p.249]{K:78}) implies that there exists an $M\in\R_{\ge 0}$ such that $\sup_{t\in I} \|\op{F}(t)\|_{\bo(\cX)} \le M < \infty$. Consequently,
\begin{align}
	\| f(x) \|_{C(I;\cX)}
	& \doteq \sup_{t\in I} \|f(x)(t)\| = \sup_{t\in I} \| \op{F}(t)\, x\|
	\le \sup_{t\in I} \|\op{F}(t)\|_{\bo(\cX)} \, \| x \| \le M \, \|x\|\,.
	\nn
\end{align}
That is $f:\cX\mapsinto C(I;\cX)$ is bounded. Furthermore, as $f:\cX\mapsinto C(I;\cX)$ satisfies by definition $f(x) = \op{F}(\cdot)\, x$ for all $x\in\cX$, it is linear. Hence, $f\in\bo(\cX;C(I;\cX))$.

Conversely, select any $f\in\bo(\cX;C(I;\cX))$. By definition, there exists a $K\in\R_{\ge 0}$ such that for all $x\in\cX$,
$
	\sup_{t\in I} \|f(x)(t)\| = \|f(x)\|_{C(I;\cX)}
	\le K\, \|x\|
$.
Define $\op{F}(t)\, x \doteq f(x)(t)$, $t\in I$. Hence, $\|\op{F}(t) \, x\| \le K\, \|x\|$ for all $x\in\cX$, so that
\begin{align}
	\op{F}(t) & \in\bo(\cX)
	\quad\quad \forall\  t\in I\,.
	\label{eq:converse-bo}
\end{align}
Furthermore, as $f(x)\in C(I;\cX)$ for every $x\in\cX$, given any $x\in\cX$, $t_0\in I$ and $\eps\in\R_{>0}$, there exists a $\delta\in\R_{>0}$ such that 
\begin{align}
	|t - t_0| < \delta
	\quad\Longrightarrow\quad & |f(x)(t) - f(x)(t_0)| < \eps
	\quad\Longleftrightarrow\quad |\op{F}(t)\, x - \op{F}(t_0) \, x| < \eps\,.
	\nn
\end{align}
That is, $\op{F}:I\mapsinto\bo(\cX)$ is strongly continuous. Recalling \er{eq:space-C0} and \er{eq:converse-bo}, it follows that $\op{F}\in C_0(I;\bo(\cX))$.
\end{proof}

In view of Lemma \ref{lem:spaces}, spaces $C(I;\bo(\cX))$ and $C_0(I;\bo(\cX))$ of \er{eq:space-C} and \er{eq:space-C0} may be equipped with the respective norms
\begin{align}
	\begin{aligned}
	& \hspace{-2mm} \| \op{F} \|_{C\{I\}}
	\doteq \sup_{t\in I} \|\op{F}(t)\|_{\bo(\cX)}\,,
	&& \hspace{-2mm} \op{F}\in C(I;\bo(\cX))\,,
	\\
	& \hspace{-2mm} \| \op{F} \|_{C_0\{I\}}
	\doteq \sup_{\|x\|=1} \|\op{F}(\cdot)\, x\|_{C(I;\cX)}\,,
	&& \hspace{-2mm} \op{F}\in C_0(I;\bo(\cX))\,.
	\end{aligned}
	\label{eq:norm-C-C0}
\end{align}

\begin{lemma}
\label{lem:norms}
$\|\cdot\|_{C\{I\}}$ may be extended to $C_0(I;\bo(\cX))$, whereupon it is equivalent to $\|\cdot\|_{C_0\{I\}}$.
\end{lemma}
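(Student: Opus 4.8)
The plan is to extend $\|\cdot\|_{C\{I\}}$ to $C_0(I;\bo(\cX))$ by the very same formula used in \er{eq:norm-C-C0}, and then to observe that on $C_0(I;\bo(\cX))$ the extended quantity actually \emph{coincides} with $\|\cdot\|_{C_0\{I\}}$. Equivalence is then immediate (indeed with both equivalence constants equal to $1$). Concretely, for $\op{F}\in C_0(I;\bo(\cX))$ I would set $\|\op{F}\|_{C\{I\}} \doteq \sup_{t\in I}\|\op{F}(t)\|_{\bo(\cX)}$, which agrees with the definition in \er{eq:norm-C-C0} whenever $\op{F}\in C(I;\bo(\cX))$, so that this is a genuine extension. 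The only thing requiring argument is that this supremum is finite for strongly (rather than uniformly) continuous $\op{F}$; everything else is a routine interchange of suprema.

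First I would establish finiteness of the extension, reusing the machinery already deployed in the proof of Lemma \ref{lem:spaces}. Fix $\op{F}\in C_0(I;\bo(\cX))$. By definition of strong continuity, the map $\op{F}(\cdot)\,x:I\mapsinto\cX$ lies in $C(I;\cX)$ for each $x\in\cX$, so $\|\op{F}(\cdot)\,x\|:I\mapsinto\R_{\ge 0}$ is continuous on the compact interval $I$ and hence attains a finite maximum $M_x$ by the Extreme Value Theorem; that is, $\sup_{t\in I}\|\op{F}(t)\,x\|\le M_x<\infty$ for every $x\in\cX$. The Uniform Boundedness Theorem (as cited in Lemma \ref{lem:spaces}) then yields a single $M\in\R_{\ge 0}$ with $\sup_{t\in I}\|\op{F}(t)\|_{\bo(\cX)}\le M<\infty$. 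Thus $\|\op{F}\|_{C\{I\}}$ is well-defined and finite on all of $C_0(I;\bo(\cX))$.

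Second I would prove the equality of the two norms by writing each as a double supremum over the product set $\{(t,x):t\in I,\ \|x\|=1\}$ and interchanging the order, which is always permissible for suprema. Explicitly,
\begin{align}
	\|\op{F}\|_{C\{I\}}
	& = \sup_{t\in I}\sup_{\|x\|=1}\|\op{F}(t)\,x\|
	= \sup_{\|x\|=1}\sup_{t\in I}\|\op{F}(t)\,x\|
	= \sup_{\|x\|=1}\|\op{F}(\cdot)\,x\|_{C(I;\cX)}
	= \|\op{F}\|_{C_0\{I\}}\,,
	\nn
\end{align}
where the first equality unfolds the operator norm $\|\op{F}(t)\|_{\bo(\cX)}=\sup_{\|x\|=1}\|\op{F}(t)\,x\|$, the middle equality is the interchange of suprema, and the last step recalls the definition of $\|\cdot\|_{C(I;\cX)}$ from Lemma \ref{lem:spaces} and of $\|\cdot\|_{C_0\{I\}}$ in \er{eq:norm-C-C0}. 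Since the two norms are in fact equal, they are a fortiori equivalent, completing the proof. The main (and only) obstacle is the finiteness step, but this is handled verbatim by the Uniform Boundedness argument already established for Lemma \ref{lem:spaces}; the remaining interchange of suprema is purely formal.
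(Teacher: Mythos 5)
Your proposal is correct and follows essentially the same route as the paper: the paper's proof of Lemma \ref{lem:norms} is precisely the interchange of suprema $\sup_{\|x\|=1}\sup_{t\in I}\|\op{F}(t)\,x\| = \sup_{t\in I}\sup_{\|x\|=1}\|\op{F}(t)\,x\|$ that constitutes your second step, showing the two norms coincide on $C_0(I;\bo(\cX))$. Your additional finiteness argument (Extreme Value Theorem plus Uniform Boundedness) is a reasonable explicit check that the paper leaves implicit, since it already follows from the identification $C_0(I;\bo(\cX))\equiv\bo(\cX;C(I;\cX))$ established in Lemma \ref{lem:spaces}.
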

\begin{proof}
Fix $\op{F}\in C_0(I;\bo(\cX))$. By definition \er{eq:norm-C-C0},
\begin{align}
	& \|\op{F}\|_{C_0\{I\}}
	= \sup_{\|x\|=1} \| \op{F}(\cdot)\, x\|_{C(I;\cX)}
	= \sup_{\|x\|=1} \sup_{t\in I} \| \op{F}(t)\, x \|
	\nn\\
	& = \sup_{t\in I} \sup_{\|x\|=1} \| \op{F}(t)\, x \|
	= \sup_{t\in I} \| \op{F}(t) \|_{\bo(\cX)}
	\equiv \| \op{F} \|_{C\{I\}}\,.
	\nn
\end{align}

\vspace{-3mm}
\end{proof}

%
%

\begin{lemma}
\label{lem:Banach}
Given any closed interval $I\subset\R$, the normed spaces 
$$
	( C(I;\bo(\cX)),\, \|\cdot\|_{C\{I\}} )\,,
	\qquad
	( C_0(I;\bo(\cX)),\, \|\cdot\|_{C_0\{I\}} )\,,
$$
defined via \er{eq:space-C}, \er{eq:space-C0}, and \er{eq:norm-C-C0}, are Banach spaces.
\end{lemma}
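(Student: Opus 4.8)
The plan is to recognize both assertions as instances of two standard completeness principles: that $C(I;Y)$ under the uniform norm is Banach whenever $Y$ is a Banach space, and that $\bo(X;Y)$ under the operator norm is Banach whenever $Y$ is a Banach space. The norm and vector-space axioms are routine in each case — the stated norm is either a supremum of the sub-multiplicative operator norm $\|\cdot\|_{\bo(\cX)}$ over $t\in I$, or the operator norm of a map into a normed space — so I would dispatch these quickly and concentrate on completeness.

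For the first space, I would first note that $\bo(\cX)$ is itself a Banach space, since $\cX$ is a Hilbert (hence complete) space and $\bo(\cX;\cY)$ is complete whenever the target is complete. Given a Cauchy sequence $\{\op{F}_n\}\subset C(I;\bo(\cX))$ with respect to $\|\cdot\|_{C\{I\}}$, the pointwise estimate $\|\op{F}_n(t)-\op{F}_m(t)\|_{\bo(\cX)} \le \|\op{F}_n-\op{F}_m\|_{C\{I\}}$ shows that $\{\op{F}_n(t)\}$ is Cauchy in $\bo(\cX)$ for each fixed $t\in I$, hence converges to some $\op{F}(t)\in\bo(\cX)$. Because the Cauchy bound is uniform in $t$, the convergence $\op{F}_n\to\op{F}$ is uniform on $I$, and a uniform limit of continuous $\bo(\cX)$-valued functions is continuous; thus $\op{F}\in C(I;\bo(\cX))$ and $\|\op{F}_n-\op{F}\|_{C\{I\}}\to 0$.

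For the second space, I would avoid repeating a direct Cauchy-sequence argument and instead exploit the identification already established. Lemma \ref{lem:spaces} gives $C_0(I;\bo(\cX)) \equiv \bo(\cX; C(I;\cX))$, and Lemma \ref{lem:norms} shows that $\|\cdot\|_{C_0\{I\}}$ coincides with the operator norm of $\bo(\cX; C(I;\cX))$ under this identification, since for $f(x)=\op{F}(\cdot)\,x$ one has $\|f\| = \sup_{\|x\|=1}\|f(x)\|_{C(I;\cX)} = \sup_{\|x\|=1}\|\op{F}(\cdot)\,x\|_{C(I;\cX)} = \|\op{F}\|_{C_0\{I\}}$. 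The target space $C(I;\cX)$ is Banach — this is exactly the first completeness principle applied with $\cX$ in place of $\bo(\cX)$. Since $\bo(X;Y)$ is Banach whenever $Y$ is, it follows that $\bo(\cX; C(I;\cX))$, and hence $(C_0(I;\bo(\cX)),\,\|\cdot\|_{C_0\{I\}})$, is a Banach space.

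The main obstacle, such as it is, is ensuring the identification in the second part is isometric rather than merely a set bijection, so that completeness transfers faithfully; this is precisely the content of Lemma \ref{lem:norms}, so no separate work is needed. The only other point demanding care is the completeness of $\bo(\cX)$ itself, which relies on $\cX$ being complete — guaranteed here since $\cX$ is a Hilbert space.
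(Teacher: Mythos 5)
Your proposal is correct and takes essentially the same route as the paper: a direct Cauchy-sequence argument for $( C(I;\bo(\cX)),\, \|\cdot\|_{C\{I\}} )$ using completeness of $\bo(\cX)$ and continuity of the uniform limit, followed by the identification $C_0(I;\bo(\cX))\equiv\bo(\cX;C(I;\cX))$ from Lemma \ref{lem:spaces} combined with completeness of $C(I;\cX)$ and of spaces of bounded operators into a Banach target. The only cosmetic differences are that the paper writes out the $\eps/3$ argument for continuity of the limit rather than citing the uniform-limit principle, and that the isometry you attribute to Lemma \ref{lem:norms} is in fact immediate from definition \er{eq:norm-C-C0}, as your own chain of equalities shows.
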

\begin{proof}
The proof that $( C(I;\bo(\cX)),\, \|\cdot\|_{C\{I\}} )$ is a Banach space follows a standard argument (for example, see the proof of \cite[Theorem 4.3.2, p.115]{R:88}) generalized to this setting. In particular, let $\{\op{F}_n\}$ denote a Cauchy sequence in $C(I;\bo(\cX))$. By inspection of \er{eq:norm-C-C0},
$ 
	\| \op{F}_n(t) - \op{F}_m(t)\|_{\bo(\cX)} 
	\le \| \op{F}_n - \op{F}_m\|_{C\{I\}}
$
for any $t\in I$, and $n,m\in\N$. Fixing any $t\in I$, $\{\op{F}_n(t)\}$ defines a Cauchy sequence in $(\bo(\cX), \, \|\cdot\|_{\bo(\cX)})$, which is a Banach space (see for example \cite[Theorem 2.10-1, p.118]{K:78}). Hence, there exists a $\op{F}(t)\in\bo(\cX)$ such that $\op{F}(t) = \lim_{n\rightarrow\infty} \op{F}_n(t)$. Let $\{ t_n \}_{n\in\N}$ denote a sequence in $I$ such that $\lim_{n\rightarrow\infty} t_n = t$. Fix $\eps\in\R_{>0}$ and $N\in\N$ sufficiently large such that $\| \op{F}_N - \op{F} \|_{C\{I\}} < \ts{\frac{\eps}{3}}$. Hence, applying \er{eq:norm-C-C0},  
\begin{align}
	\| \op{F}_N(t_n) - \op{F}(t_n) \| 
	& \le \ts{\frac{\eps}{3}}
	\qquad \forall \ n\in\N.
	\label{eq:Banach-1}
\end{align}
As $\op{F}_N\in C(I;\bo(\cX))$, there exists a $\delta\in\R_{>0}$ such that
\begin{align}
	\left. 
	\ba{r}
	|s-t|<\delta 
	\\
	\ s\in I
	\ea \right\}
	& \ \Longrightarrow \
	\|\op{F}_N(s) - \op{F}_N(t) \|_{\bo(\cX)} < \ts{\frac{\eps}{3}}\,.
	\label{eq:Banach-2}
\end{align}
Furthermore, by definition of $\{t_n\}_{n\in\N}$, there exists an $M\in\N$ sufficiently large such that 
\begin{align}
	n \ge M
	& \quad\Longrightarrow\quad
	|t_n - t| < \delta\,.
	\label{eq:Banach-3}
\end{align}
Hence, for all $n\ge M$, the triangle inequality combined with \er{eq:Banach-1}, \er{eq:Banach-2}. and \er{eq:Banach-3} implies that
\begin{align}
	& \| \op{F}(t_n) - \op{F}(t) \|_{\bo(\cX)}
	\le \| \op{F}(t_n) - \op{F}_N(t_n) \|_{\bo(\cX)} 
	\nn\\
	& \quad + \| \op{F}_N(t_n) - \op{F}_N(t) \|_{\bo(\cX)}
	+ \| \op{F}_N(t) - \op{F}(t) \|_{\bo(\cX)}
	< \eps\,.
	\nn
\end{align}
Hence, as $\eps\in\R{>0}$ is arbitrary, $\op{F}$ must be continuous. That is, $\op{F}\in C(I;\bo(\cX))$, which implies that $( C(I;\bo(\cX)),\, \|\cdot\|_{C\{I\}} )$ is complete, and hence is a Banach space.

In order to prove that $C_0(I;\bo(\cX))$ is a Banach space, recall by the right-hand equivalence of Lemma \ref{lem:spaces} that
$C_0(I;\bo(\cX)) \equiv \bo(\cX; \bo(I;\cX))$, where $\bo(I;\cX)$ is equipped with the norm $\|f\|_{C(I;\cX)} \doteq \sup_{t\in I} \|f(t)\|$. Using the same argument as above, it may be shown that $\cY \doteq ( C(I;\cX), \, \|\cdot\|_{C(I;\cX)} )$ is a Banach space. Hence, applying \cite[Theorem 2.10-2, p.118]{K:78}, $\bo(\cX; \cY)$ is also a Banach space.
\end{proof}


\section{Yosida approximation}
\label{app:Yosida}
The {\em Yosida approximations} \cite{P:83,BDDM:07} of an unbounded densely defined operator $\op{A}:\dom(\op{A})\subset\cX\mapsinto\cX$ refer to a sequence $\{\op{A}_n\}$, $n\in\N$, of approximating bounded linear operators $\op{A}_n\in\bo(\cX)$ that are strongly convergent to $\op{A}$. The following result is classical, see for example \cite{P:83,BDDM:07}, and is provided here for completeness.

\begin{theorem}[Yosida approximation]
\label{thm:Yosida}
Given an unbounded and densely defined operator $\op{A}$ satisfying Assumption \ref{ass:op-A}, there exists a sequence of operators $\{\op{A}_n\}_{n\in\N}\subset\bo(\cX)$, such that the following properties hold:
\begin{enumerate}[(i)]
\item the sequence $\{\op{A}_n\}_{n\in\N}$ is strongly convergent to $\op{A}$ on $\cX$, i.e.
\begin{align}
	\lim_{n\rightarrow\infty} \op{A}_n \, x  = \op{A}\, x\,,
	\qquad \forall \ x\in\dom(\op{A}); \text{ and}
	\label{eq:Yosida-limit}
\end{align}
\item there exists constants $M\in\R_{\ge 1}$ and $\omega\in\R_{\ge 0}$ such that the (respectively, strongly and uniformly continuous) semigroups generated by $\op{A}$ and $\op{A}_n$ satisfy the bound
\begin{align}
	\max\left( \left\|e^{\op{A}\, t}  \right\|_{\bo(\cX)},\, \left\| e^{\op{A}_n\, t} \right\|_{\bo(\cX)} \right)
	& \le M\, e^{\omega\, t}
	\qquad
	\forall \ t\in\R_{\ge 0},\, n\in\N\,.
	\label{eq:Yosida-bounds}
\end{align}
\end{enumerate}
\end{theorem}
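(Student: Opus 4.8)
The plan is to follow the classical Hille--Yosida construction (see \cite{P:83,BDDM:07}), using only the semigroup-generation part of Assumption \ref{ass:op-A}. Since $\op{A}$ generates a $C_0$-semigroup, the Hille--Yosida theorem supplies constants $M\in\R_{\ge 1}$ and $\omega\in\R_{\ge 0}$ such that every real $\lambda>\omega$ lies in the resolvent set of $\op{A}$, the resolvent $R(\lambda)\doteq(\lambda\,\op{I}-\op{A})^{-1}\in\bo(\cX)$ is bounded, the higher-order estimates $\|R(\lambda)^k\|_{\bo(\cX)}\le M\,(\lambda-\omega)^{-k}$ hold for all $k\in\N$, and the given semigroup obeys $\|e^{\op{A}\,t}\|_{\bo(\cX)}\le M\,e^{\omega\,t}$. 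For each integer $n>\omega$ (i.e.\ all but finitely many $n\in\N$) I would then define the Yosida approximation
\[
	\op{A}_n \doteq n\,\op{A}\,R(n) = n^2\,R(n) - n\,\op{I}\in\bo(\cX),
\]
which is bounded because $R(n)\in\bo(\cX)$.

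For assertion (i), the first step is to show that $n\,R(n)\,x\to x$ for every $x\in\cX$. On $\dom(\op{A})$ this is immediate from the identity $n\,R(n)\,x-x=R(n)\,\op{A}\,x$ together with the first-order bound $\|R(n)\|_{\bo(\cX)}\le M\,(n-\omega)^{-1}\to 0$, and the extension to all of $\cX$ follows from density of $\dom(\op{A})$ and the uniform boundedness $\sup_{n>2\omega}\|n\,R(n)\|_{\bo(\cX)}<\infty$ furnished by the same estimate. Given this, for $x\in\dom(\op{A})$ I would write $\op{A}_n\,x=n\,R(n)\,\op{A}\,x$; since $\op{A}\,x\in\cX$ is fixed and $n\,R(n)\to\op{I}$ strongly, the limit \er{eq:Yosida-limit} follows at once.

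For assertion (ii), the bound on $\|e^{\op{A}\,t}\|_{\bo(\cX)}$ is already in hand from Hille--Yosida. For the approximants I would use the commuting splitting $\op{A}_n=n^2\,R(n)-n\,\op{I}$, which gives $e^{\op{A}_n\,t}=e^{-n\,t}\,e^{n^2\,R(n)\,t}$. Expanding the second factor as a power series and applying the higher-order resolvent estimates yields
\[
	\left\|e^{\op{A}_n\,t}\right\|_{\bo(\cX)}
	\le M\,e^{-n\,t}\sum_{k=0}^{\infty}\frac{(n^2\,t)^k}{k!}\,(n-\omega)^{-k}
	= M\,\exp\!\left(\tfrac{n\,\omega}{n-\omega}\,t\right).
\]
Since $n\omega/(n-\omega)\downarrow\omega$, this exponent is at most $2\omega$ for all $n\ge 2\omega$ (the case $\omega=0$ being trivial), so \er{eq:Yosida-bounds} holds with $\omega$ replaced by $2\omega$, uniformly in both $n$ and $t$.

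The step I expect to be the main obstacle is precisely this uniform-in-$n$ estimate in (ii): the first-order resolvent bound used in (i) is insufficient, and one genuinely needs the full Hille--Yosida family $\|R(\lambda)^k\|_{\bo(\cX)}\le M(\lambda-\omega)^{-k}$ to sum the exponential series, together with the care required to extract a \emph{single} growth constant valid simultaneously for $\op{A}$ and for every $\op{A}_n$. The strong-convergence argument in (i) is routine by comparison, once the approximation identity and the uniform boundedness of $\{n\,R(n)\}$ are established.
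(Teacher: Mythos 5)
Your proposal is correct and follows essentially the same route as the paper's proof: the paper likewise invokes the Hille--Yosida estimates to define the approximants via $n\,\op{A}\,\op{R}_{\op{A}}(n) = n^2\,\op{R}_{\op{A}}(n) - n\,\op{I}$ (its Lemmas \ref{lem:op-R-properties} and \ref{lem:Yosida-hat}), proves strong convergence on $\dom(\op{A})$ through the resolvent identity together with density and the uniform bound on $n\,\op{R}_{\op{A}}(n)$, and obtains the uniform semigroup bound by expanding $e^{-n t}\,e^{n^2 \op{R}_{\op{A}}(n)\, t}$ as a power series with the higher-order resolvent estimates, arriving at the same exponent $n\,\omega_0/(n-\omega_0)\le 2\,\omega_0$ (its Lemma \ref{lem:bounds-on-semigroups}). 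The only cosmetic difference is that the paper re-indexes at the end, setting $\op{A}_n \doteq \op{A}_{n-n_\omega+1}^\dagger$, so that the sequence is literally defined for every $n\in\N$ as the theorem statement requires, whereas you leave the approximants defined only for $n>\omega$; this is bookkeeping, not a gap.
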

The proof of Theorem \ref{thm:Yosida} is standard, see for example \cite{P:83}.
%
%
It may be constructed via a sequence of lemmas. Its foundation is the {\em resolvent} $\op{R}_{\op{A}}:\dom(\op{R}_\op{A})\subset\C\mapsinto\bo(\cX)$ of operator $\op{A}$, which is defined by
\begin{align}
	\begin{aligned}
	\op{R}_{\op{A}}(\lambda)
	& \doteq (\lambda\, \op{I} - \op{A})^{-1}\,,
	\qquad
	\lambda\in\dom(\op{R}_\op{A})\,,
	\\
	\dom(\op{R}_{\op{A}})
	& \doteq \left\{ \lambda\in\C \, \left| \, 
					\ba{c}
					\text{$\lambda\, \op{I} - \op{A}$ is invertible, with}
					\\
					(\lambda\, \op{I} - \op{A})^{-1}\in\bo(\cX)
					\ea \right.
				 \right\}.
	\end{aligned}
	\label{eq:resolvent}
\end{align}
(Note that $\rho(\op{A}) \doteq \dom(\op{R}_\op{A})\subset\C$ is referred to as the {\em resolvent set} of $\op{A}$, while its complement $\sigma(\op{A})\doteq \C\setminus\rho(\op{A})$ is referred to as the {\em spectrum} of $\op{A}$.) 
\begin{lemma}
\label{lem:op-R-properties}
Given an unbounded and densely defined linear operator $\op{A}$ satisfying Assumption \ref{ass:op-A}, there exists  $M\in\R_{\ge 1}$, $\omega_0\in\R_{\ge 0}$, and $n_{\omega_0}\doteq\lceil \omega_0 \rceil+1\in\N$ such that
\begin{align}
	n \in\N_{\ge n_{\omega_0}}
	& \quad\Longrightarrow\quad
	\left\{ \begin{aligned}
		& \| \op{R}_\op{A}(n)\|_{\bo(\cX)} \le \frac{M}{n - \omega_0}\,,
		\\
		& \op{R}_\op{A}(n)\, x \in \dom(\op{A}) \quad \forall \ x\in\cX\,.
	\end{aligned} \right.
	\label{eq:op-R-properties}
\end{align}
\end{lemma}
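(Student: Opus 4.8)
The plan is to identify the resolvent $\op{R}_\op{A}(n)$ with the Laplace transform of the $C_0$-semigroup generated by $\op{A}$, and then to read both assertions of \er{eq:op-R-properties} directly off that representation. First I would record the exponential growth bound for the semigroup: since $\op{A}$ generates a $C_0$-semigroup by Assumption \ref{ass:op-A}, there exist (see \cite[Theorem 2.2, p.4]{P:83}) constants $M\in\R_{\ge 1}$ and $\omega_0\in\R_{\ge 0}$ with $\|e^{\op{A}\, t}\|_{\bo(\cX)}\le M\, e^{\omega_0\, t}$ for all $t\in\R_{\ge 0}$. Setting $n_{\omega_0}\doteq\lceil\omega_0\rceil+1$ then guarantees $n-\omega_0\ge 1>0$ for every $n\in\N_{\ge n_{\omega_0}}$, which is exactly the range in which the construction below is valid.

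Second, for each fixed $n\in\N_{\ge n_{\omega_0}}$ I would introduce the candidate operator $\op{G}_n$ defined via the Bochner integral
\begin{align}
	\op{G}_n\, x
	& \doteq \int_0^\infty e^{-n\, t}\, e^{\op{A}\, t}\, x\, dt\,,
	\qquad x\in\cX\,.
	\nn
\end{align}
Absolute convergence of this integral and the asserted norm bound follow at once from the growth estimate, since $\|e^{-n\, t}\, e^{\op{A}\, t}\, x\|\le M\, e^{-(n-\omega_0)\, t}\, \|x\|$ is integrable over $\R_{\ge 0}$ whenever $n>\omega_0$; integrating yields $\|\op{G}_n\, x\|\le \frac{M}{n-\omega_0}\, \|x\|$ for all $x\in\cX$, and hence $\op{G}_n\in\bo(\cX)$ with $\|\op{G}_n\|_{\bo(\cX)}\le \frac{M}{n-\omega_0}$.

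Third, and this is the substantive step, I would verify that $\op{G}_n=\op{R}_\op{A}(n)$, i.e. that $n\in\rho(\op{A})$ with $(n\,\op{I}-\op{A})^{-1}=\op{G}_n$. Using the semigroup identity $e^{\op{A}\, h}\, e^{\op{A}\, t}=e^{\op{A}\,(t+h)}$, a change of integration variable, and the definition of $\op{A}$ as the strong generator of $e^{\op{A}\,\cdot}$, one computes the strong limit of $h^{-1}(e^{\op{A}\, h}-\op{I})\,\op{G}_n\, x$ as $h\downarrow 0$ and finds $\op{G}_n\, x\in\dom(\op{A})$ with $\op{A}\,\op{G}_n\, x = n\,\op{G}_n\, x - x$ for every $x\in\cX$; equivalently $(n\,\op{I}-\op{A})\,\op{G}_n=\op{I}$. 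The reverse identity $\op{G}_n\,(n\,\op{I}-\op{A})=\op{I}$ on $\dom(\op{A})$ follows by interchanging $\op{A}$ with the integral, which is permissible because $\op{A}$ is closed under Assumption \ref{ass:op-A}. Together these identities give $n\in\rho(\op{A})$ and $\op{R}_\op{A}(n)=\op{G}_n$, so the estimate of the previous paragraph becomes the first line of \er{eq:op-R-properties}.

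Finally, the range property in the second line of \er{eq:op-R-properties} is immediate once $\op{R}_\op{A}(n)=(n\,\op{I}-\op{A})^{-1}$ is established: for any $x\in\cX$, the element $y\doteq\op{R}_\op{A}(n)\, x$ satisfies $(n\,\op{I}-\op{A})\, y = x$, so $y\in\dom(\op{A})$ by the very definition of the inverse. I expect the principal obstacle to lie in the third step -- specifically, justifying the interchange of the closed unbounded operator $\op{A}$ with the Bochner integral defining $\op{G}_n$, which must be handled through the closedness of $\op{A}$ (Assumption \ref{ass:op-A}) and the convergence of the difference quotients, rather than by a naive differentiation under the integral sign.
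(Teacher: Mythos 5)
Your proposal is correct, but it takes a genuinely different route from the paper. The paper's proof treats the resolvent bound as a citation: after extracting the growth constants $M\in\R_{\ge 1}$, $\omega_0\in\R_{\ge 0}$ from \cite[Theorem 2.2, p.4]{P:83} (exactly as you do), it invokes the Hille--Yosida theorem \cite[Theorem 5.3, p.20]{P:83} as a black box to conclude $\|\op{R}_\op{A}(n)\|_{\bo(\cX)}\le M/(n-\omega_0)$ for $n\in\N_{\ge n_{\omega_0}}$, and then establishes the domain assertion by the algebraic identity $\op{A}\,\op{R}_\op{A}(n)\, x = \left[ n\, \op{R}_\op{A}(n) - (n\, \op{I} - \op{A})\, \op{R}_\op{A}(n)\right] x = (n\, \op{R}_\op{A}(n) - \op{I})\, x \in\cX$. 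You instead re-prove the relevant (necessity) half of Hille--Yosida from scratch: you build the candidate resolvent as the Laplace transform $\op{G}_n\, x = \int_0^\infty e^{-n\, t}\, e^{\op{A}\, t}\, x\, dt$, obtain the norm bound by integrating the growth estimate, and verify $(n\,\op{I}-\op{A})\,\op{G}_n=\op{I}$ and $\op{G}_n\,(n\,\op{I}-\op{A})=\op{I}$ on $\dom(\op{A})$ via difference quotients and closedness of $\op{A}$. What your approach buys is self-containment and an explicit proof that $n\in\rho(\op{A})$ at all (the paper obtains membership of $n$ in the resolvent set only implicitly, through the citation); it also makes the second assertion of \er{eq:op-R-properties} genuinely immediate, since $\op{R}_\op{A}(n)$ is by construction the inverse of $(n\,\op{I}-\op{A}):\dom(\op{A})\mapsinto\cX$ and so has range in $\dom(\op{A})$ -- arguably cleaner than the paper's roundabout algebraic argument, which presupposes the same fact. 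What the paper's approach buys is brevity: two citations replace your third step, which is the standard but nontrivial Laplace-transform computation, including the interchange of the closed operator $\op{A}$ with the Bochner integral that you correctly flag as the main technical burden.
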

\begin{proof}
By Assumption \ref{ass:op-A}, $\op{A}$ is the generator of a $C_0$-semigroup of bounded linear operators, with elements $e^{\op{A}\, t}\in\bo(\cX)$ indexed by $t\in\R_{\ge 0}$. Hence, applying \cite[Theorem 2.2, p.4]{P:83}, there exists an $M\in\R_{\ge 1}$, $\omega_0\in\R_{\ge 0}$, such that $\|e^{\op{A}\, t}\|_{\bo(\cX)}\le M\, e^{\omega_0\, t}$ for all $t\in\R_{\ge 0}$. Define $n_{\omega_0} \doteq \lceil \omega_0 \rceil + 1$ as per the lemma statement. Fix any $n\in\N_{\ge n_{\omega_0}}$. The Hille-Yosida Theorem (for example, \cite[Theorem 5.3, p.20]{P:83}) implies that 
\begin{align}
	\| \op{R}_\op{A}(n)\|_{\bo(\cX)} \le \frac{M}{n - \omega_0}\,,
	\label{eq:op-R-bound}
\end{align}
which is as per first assertion of \er{eq:op-R-properties}. Fix any $x\in\cX$. Inequality \er{eq:op-R-bound} immediately implies that $n\, \op{R}_\op{A}(n) - \op{I}\in\bo(\cX)$, so that $\xi\doteq (n\, \op{R}_\op{A}(n) - \op{I})\, x\in\cX$. However,
\begin{align}
	\xi 
	& = (n\, \op{R}_\op{A}(n) - \op{I})\, x
	= \left[ n\, \op{R}_\op{A}(n) - (n\, \op{I} - \op{A}) \, \op{R}_\op{A}(n) \right] x
	= \op{A}\, \op{R}_\op{A}(n)\, x\,.
	\nn
\end{align}
Consequently, as $\xi\in\cX$, it follows immediately that $\op{R}_\op{A}(n)\, x\in\dom(\op{A})$. As $n\in\N_{\ge n_{\omega_0}}$ and $x\in\cX$ are both arbitrary, the proof is complete.
\end{proof}

\begin{lemma}
\label{lem:Yosida-hat}
Given an unbounded and densely defined linear operator $\op{A}$ satisfying Assumption \ref{ass:op-A}, and $n_{\omega_0}\in\N$ as per Lemma \ref{lem:op-R-properties}, the operator 
\begin{align}
	\op{A}_n^\dagger
	& \doteq n\, \op{A}\, \op{R}_\op{A} (n)\,,
	&&
	\dom(\op{A}_n^\dagger) \doteq \cX\,,
	\label{eq:Yosida-hat}
\end{align}
is well-defined for all $n\in\N_{\ge n_{\omega_0}}$ and satisfies the following properties:
\begin{enumerate}[(i)]
\item $\op{A}_n^\dagger = n \, ( n\, \op{R}_\op{A}(n) - \op{I} ) \in\bo(\cX)$ for all $n\in\N_{\ge n_{\omega_0}}$; and
\item $\lim_{n\rightarrow\infty} \op{A}_n^\dagger\, x = \op{A}\, x$ on $\cX$ for all $x\in\cX$.
\end{enumerate}
\end{lemma}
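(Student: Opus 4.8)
The plan is to establish both assertions directly from the defining resolvent identity $(n\op{I} - \op{A})\,\op{R}_\op{A}(n) = \op{I}$ together with the two properties supplied by Lemma \ref{lem:op-R-properties}, namely the bound $\|\op{R}_\op{A}(n)\|_{\bo(\cX)} \le M/(n-\omega_0)$ and the inclusion $\op{R}_\op{A}(n)\, x\in\dom(\op{A})$ for every $x\in\cX$ and $n\in\N_{\ge n_{\omega_0}}$.

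For assertion (i), I would first note that since $\op{R}_\op{A}(n)\, x\in\dom(\op{A})$ the composition $\op{A}\,\op{R}_\op{A}(n)\, x$ is well-defined for every $x\in\cX$. Expanding the resolvent identity $(n\op{I}-\op{A})\,\op{R}_\op{A}(n)\, x = x$ then yields $n\,\op{R}_\op{A}(n)\, x - \op{A}\,\op{R}_\op{A}(n)\, x = x$, hence $\op{A}\,\op{R}_\op{A}(n)\, x = (n\,\op{R}_\op{A}(n) - \op{I})\, x$ for all $x\in\cX$. Multiplying by $n$ gives $\op{A}_n^\dagger\, x = n\,(n\,\op{R}_\op{A}(n) - \op{I})\, x$, which is the claimed algebraic form. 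Boundedness of $\op{A}_n^\dagger$ follows immediately, since $\op{R}_\op{A}(n)\in\bo(\cX)$ by Lemma \ref{lem:op-R-properties} forces $n\,(n\,\op{R}_\op{A}(n) - \op{I})\in\bo(\cX)$; this simultaneously confirms that $\op{A}_n^\dagger$ is well-defined on all of $\cX$.

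Assertion (ii) I would prove in two stages. First I would establish the auxiliary strong limit $\lim_{n\rightarrow\infty} n\,\op{R}_\op{A}(n)\, x = x$ for every $x\in\cX$. On the dense subspace $\dom(\op{A})$ this is direct: for $x\in\dom(\op{A})$ the resolvent commutes with $\op{A}$ (obtained by comparing $(n\op{I}-\op{A})\op{R}_\op{A}(n)=\op{I}$ on $\cX$ with $\op{R}_\op{A}(n)(n\op{I}-\op{A})=\op{I}$ on $\dom(\op{A})$), so $n\,\op{R}_\op{A}(n)\, x - x = \op{A}\,\op{R}_\op{A}(n)\, x = \op{R}_\op{A}(n)\,\op{A}\, x$, whence $\|n\,\op{R}_\op{A}(n)\, x - x\| \le \frac{M}{n-\omega_0}\,\|\op{A}\, x\| \to 0$. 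To pass to arbitrary $x\in\cX$ I would invoke the uniform bound $\|n\,\op{R}_\op{A}(n)\|_{\bo(\cX)} \le nM/(n-\omega_0)$, which remains bounded as $n\to\infty$, and run the standard $\eps/3$ density argument using that $\dom(\op{A})$ is dense in $\cX$ by Assumption \ref{ass:op-A}. In the second stage, for $x\in\dom(\op{A})$ I would use the commutation established above to rewrite $\op{A}_n^\dagger\, x = n\,\op{A}\,\op{R}_\op{A}(n)\, x = n\,\op{R}_\op{A}(n)\,\op{A}\, x$, and then apply the auxiliary limit to the vector $\op{A}\, x\in\cX$ to conclude $\op{A}_n^\dagger\, x \to \op{A}\, x$.

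The main obstacle is the extension of $n\,\op{R}_\op{A}(n)\, x\to x$ from $\dom(\op{A})$ to all of $\cX$: this is precisely where the uniform operator bound (rather than merely pointwise convergence) is essential, since the density argument requires control of $\|n\,\op{R}_\op{A}(n)\|_{\bo(\cX)}$ independent of $n$. The commutation of $\op{A}$ with $\op{R}_\op{A}(n)$ on $\dom(\op{A})$ is the only other delicate point, and it is secured by the two one-sided resolvent identities noted above.
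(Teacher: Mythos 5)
Your proposal is correct and follows essentially the same route as the paper: assertion \emph{(i)} via the resolvent identity and the bound of Lemma \ref{lem:op-R-properties}, and assertion \emph{(ii)} via the commutation $\op{A}_n^\dagger\, x = n\,\op{R}_\op{A}(n)\,\op{A}\, x$ on $\dom(\op{A})$ combined with a density argument using the uniform bound on $\|n\,\op{R}_\op{A}(n)\|_{\bo(\cX)}$. The only difference is organizational: you isolate the strong limit $n\,\op{R}_\op{A}(n)\, x \to x$ on all of $\cX$ as a separate step, whereas the paper inlines exactly the same $\eps$-splitting by writing $(n\,\op{R}_\op{A}(n)-\op{I})\,\op{A}\,x$ in terms of an approximant $\xi\in\dom(\op{A})$ and taking the infimum over $\xi$ at the end.
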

\begin{proof} (The following proof is standard, see for example \cite{BDDM:07}.) Define $n_{\omega_0}\in\N$ as per Lemma \ref{lem:op-R-properties}, and fix any $n\in\N_{\ge n_{\omega_0}}$, $x\in\cX$. Applying Lemma \ref{lem:op-R-properties}, it is immediate from \er{eq:op-R-properties} that $n\, \op{R}_\op{A}(n)\in\bo(\cX)$. Similarly, it is also immediate from \er{eq:op-R-properties} that $\op{R}_\op{A}(n)\, x\in\dom(\op{A})$. Hence, the operator composition $\op{A}\, \op{R}_\op{A}(n)$ in \er{eq:Yosida-hat} is well-posed on $\dom(\op{R}_\op{A}(n)) = \cX$, which implies that $\op{A}_n^\dagger$ as per \er{eq:Yosida-hat} is well-defined with $\dom(\op{A}_n^\dagger) = \cX$. Furthermore, by definition \er{eq:resolvent} of $\op{R}_\op{A}(n)$,
\begin{align}
	\op{A}_n^\dagger\, x
	& = n\, \op{A}\, \op{R}_\op{A}(n)\, x = n\, \op{A}\, (n\,\op{I} - \op{A})^{-1}\, x
	\nn\\
	& = n^2\, (n\,\op{I} - \op{A})^{-1}\, x
	-n\, (n\, \op{I} - \op{A})\, (n\, \op{I} - \op{A})^{-1}\, x
	\nn\\
	& = n^2\, \op{R}_\op{A}(n)\, x - n\, \op{I}\, x
	= n \, ( n\, \op{R}_\op{A}(n) - \op{I} ) \, x\,.
	\label{eq:op-A-n-1}
\end{align}
As $\op{R}_\op{A}(n)\in\bo(\cX)$ by Lemma \ref{lem:op-R-properties},
\begin{align}
	\|\op{A}_n^\dagger\|_{\bo(\cX)}
	& \le n \, ( n\, \|\op{R}_\op{A}(n)\|_{\bo(\cX)} + 1) \le n \left( \frac{n\, M}{n-\omega_0} + 1\right) < \infty\,.
	\nn
\end{align}
As $n\in\N_{\ge n_{\omega_0}}$ is arbitrary, it follows immediately that assertion {\em (i)} holds. In order to prove assertion {\em (ii)}, fix any $n\in\N_{\ge n_{\omega_0}}$, $x,\xi\in\dom(\op{A})$, and set $y\doteq \op{A}\, x$. Further manipulation of \er{eq:op-A-n-1} yields
\begin{align}
	\op{A}_n^\dagger\, x
	& = n^2\, \op{R}_\op{A}(n)\, x - n\, \op{I}\, x
	= n\, \op{R}_\op{A}(n) \left[ n\, \op{I} - (n\, \op{I} - \op{A}) \right] x
	= n\, \op{R}_\op{A}(n)\, \op{A}\, x\,,
	\label{eq:op-A-n-2}
\end{align}
while
\begin{align}
	(n\, \op{R}_\op{A}(n) - \op{I})\, \xi
	& = [n\, \op{R}_\op{A}(n) - \op{R}_\op{A}(n) \, (n\, \op{I} - \op{A})]\, \xi
	= \op{R}_\op{A}(n)\, \op{A}\, \xi\,.
	\label{eq:op-A-n-3}
\end{align}
Hence, writing $(n\, \op{R}_\op{A}(n) - \op{I})\, y = (n\, \op{R}_\op{A}(n) - \op{I})\, \xi + (n\, \op{R}_\op{A}(n) - \op{I})\, (y - \xi)$, 
\begin{align}
	\| (n\, \op{R}_\op{A}(n) - \op{I})\, y\|
	& \le \| (n\, \op{R}_\op{A}(n) - \op{I})\, \xi\| + \|(n\, \op{R}_\op{A}(n) - \op{I})\, (y - \xi)\|
	\nn\\
	& = \| \op{R}_\op{A}(n)\, \op{A}\, \xi\| + \|(n\, \op{R}_\op{A}(n) - \op{I})\, (y - \xi)\|
	\nn\\
	& \le \| \op{R}_\op{A}(n)\|_{\bo(\cX)} \, \|\op{A}\, \xi\| + \|n\, \op{R}_\op{A}(n) - \op{I} \|_{\bo(\cX)} \, \|y - \xi\|
	\nn\\
	& \le \| \op{R}_\op{A}(n)\|_{\bo(\cX)} \, \|\op{A}\, \xi\| + \left( n\, \|\op{R}_\op{A}(n)\|_{\bo(\cX)} + 1 \right) \, \| y - \xi \|
	\label{eq:op-A-n-4}
\end{align}
where the inequalities follow respectively by the triangle inequality and Lemma \ref{lem:op-R-properties} (i.e. that $n\,\op{R}_\op{A}(n)\in\bo(\cX)$), while the equality follows by \er{eq:op-A-n-3}. Subtracting $y = \op{A}\, x$ from both sides of \er{eq:op-A-n-2}, taking the norm, and applying \er{eq:op-A-n-4} then yields
\begin{align}	
	\| \op{A}_n^\dagger\, x - \op{A}\, x\|
	& = \| (n\, \op{R}_\op{A} (n) - \op{I} ) \, \op{A}\, x \|
	\nn\\
	& \le  \| \op{R}_\op{A}(n)\|_{\bo(\cX)} \, \|\op{A}\, \xi\| + (n\, \|\op{R}_\op{A}(n)\|_{\bo(\cX)} + 1 ) \, \| \op{A}\, x - \xi \|\,.
	\label{eq:op-A-n-5}
\end{align}
As $n\in\N_{\ge n_{\omega_0}}$ is arbitrary, \er{eq:op-A-n-5} holds in particular for any $n\in\{n_i\}_{i\in\N}$, where  $\{n_i\}_{i\in\N}$ is any sequence in $\N_{\ge n_{\omega_0}}$ satisfying $\lim_{i\rightarrow\infty} n_i=\infty$. Hence,
\begin{align}
	\lim_{n\rightarrow\infty} & \|\op{A}_n^\dagger\, x - \op{A}\, x\|
	\le \left( \lim_{n\rightarrow\infty} \|\op{R}_\op{A}(n)\| \right) \|\op{A}\, \xi\| + 
	\left(\lim_{n\rightarrow\infty} n\, \| \op{R}_\op{A}(n)\|_{\bo(\cX)} + 1 \right) \, \|\op{A}\, x - \xi\|
	\nn\\
	& \le  \left( \lim_{n\rightarrow\infty} \frac{M}{n-\omega_0} \right) \|\op{A}\, \xi\| + 
	\left(\lim_{n\rightarrow\infty} \frac{n\, M}{n - \omega_0} + 1 \right) \, \|\op{A}\, x - \xi\|
	= M\, \|\op{A}\, x - \xi\|\,,
	\nn
\end{align}
where the right-hand side limits follow by Lemma \ref{lem:op-R-properties}. As $\xi\in\dom(\op{A})$ is arbitrary,
\begin{align}
	\lim_{n\rightarrow\infty} \|\op{A}_n^\dagger\, x - \op{A}\, x\|
	& \le M \left[ \inf_{\xi\in\dom(\op{A})} \| \op{A}\, x - \xi\| \right]
	= 0\,,
	\nn
\end{align}
where $\op{A}\, x\in\cX$ and $\dom(\op{A})$ dense in $\cX$ yields the final equality. As $x\in\cX$ is arbitrary, it follows that assertion {\em (ii)} holds.
\end{proof}

\begin{lemma}
\label{lem:bounds-on-semigroups}
Given an unbounded and densely defined linear operator $\op{A}$ satisfying Assumption \ref{ass:op-A}, there exists $M\in\R_{\ge 1}$ and $\omega\in\R_{\ge 0}$ such that the operators $\op{A}_n^\dagger\in\bo(\cX)$ of \er{eq:Yosida-hat} are well-defined for all $n\in\N_{\ge n_\omega}$, $n_{\omega} \doteq \lceil \omega \rceil +1 \in\N$, with the (respectively $C_0$- and uniformly continuous) semigroups $e^{\op{A}\, t}$ and $e^{\op{A}_n^\dagger\, t}$ generated by $\op{A}$ and $\op{A}_n^\dagger$ satisfying the common bound
\begin{align}
	& \max(\|e^{\op{A}\, t}\|,\, \|e^{\op{A}_n^\dagger\, t} \| )
	\le M\, e^{\omega\, t}
	\qquad
	\forall \ t\in\R_{\ge 0},\, n\in\N_{\ge n_{\omega}}\,.
	\label{eq:bounds-on-semigroups}
\end{align}
\end{lemma}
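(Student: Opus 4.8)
The plan is to start from the explicit bounded representation $\op{A}_n^\dagger = n^2\, \op{R}_\op{A}(n) - n\, \op{I}$ supplied by assertion \emph{(i)} of Lemma \ref{lem:Yosida-hat}, which exhibits each $\op{A}_n^\dagger$ (for $n\in\N_{\ge n_{\omega_0}}$) as a bounded multiple-of-identity perturbation of the bounded operator $n^2\,\op{R}_\op{A}(n)$. Since $-n\,\op{I}$ commutes with $n^2\,\op{R}_\op{A}(n)$, the uniformly continuous semigroup generated by $\op{A}_n^\dagger$ factors as $e^{\op{A}_n^\dagger\, t} = e^{-n\, t}\, e^{n^2\, \op{R}_\op{A}(n)\, t}$, and the second factor is a norm-convergent power series in $\op{R}_\op{A}(n)$. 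The entire estimate is thereby reduced to controlling the norms of the resolvent powers $\op{R}_\op{A}(n)^k$.

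The key ingredient, and the crux of the argument, is to upgrade the single-power resolvent bound recorded in Lemma \ref{lem:op-R-properties} to the full family of powers bounds furnished by the Hille--Yosida Theorem \cite[Theorem 5.3, p.20]{P:83}: under Assumption \ref{ass:op-A} there exist $M\in\R_{\ge 1}$, $\omega_0\in\R_{\ge 0}$ with $\|e^{\op{A}\, t}\|\le M\, e^{\omega_0\, t}$ and $\|\op{R}_\op{A}(n)^k\|_{\bo(\cX)} \le M\,(n-\omega_0)^{-k}$ for every $k\in\N$ and every $n>\omega_0$. Inserting this into the power-series expansion gives, for $n\in\N_{\ge n_{\omega_0}}$,
\[
\|e^{\op{A}_n^\dagger\, t}\|_{\bo(\cX)}
\le e^{-n\, t}\sum_{k=0}^\infty \frac{(n^2\, t)^k}{k!}\,\|\op{R}_\op{A}(n)^k\|_{\bo(\cX)}
\le M\, e^{-n\, t}\, e^{n^2\, t /(n-\omega_0)}
= M\, e^{n\,\omega_0\, t/(n-\omega_0)}\,,
\]
where the last equality uses $-n + n^2/(n-\omega_0) = n\,\omega_0/(n-\omega_0)$. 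This is the decisive simplification that keeps the constant $M$ in front of the exponential; the crude estimate $\|e^{B}\|\le e^{\|B\|}$ would instead push $M$ into the exponent and destroy any uniform-in-$n$ bound.

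It then remains to absorb the $n$-dependent rate $n\,\omega_0/(n-\omega_0)$ into a single $\omega$. Since $n\mapsto n\,\omega_0/(n-\omega_0)$ is decreasing on $(\omega_0,\infty)$ with limit $\omega_0$, I would fix any $\omega\ge 2\,\omega_0$ (admitting $\omega=0$ when $\omega_0=0$) and set $n_\omega\doteq\lceil\omega\rceil+1$. For $n\ge n_\omega > \omega \ge 2\,\omega_0$ one has $n-\omega_0 > n/2$, whence $n\,\omega_0/(n-\omega_0) < 2\,\omega_0 \le \omega$, so that $\|e^{\op{A}_n^\dagger\, t}\| \le M\, e^{\omega\, t}$ for all $t\in\R_{\ge 0}$ and $n\in\N_{\ge n_\omega}$; moreover $\omega\ge\omega_0$ gives $\|e^{\op{A}\, t}\| \le M\, e^{\omega_0\, t} \le M\, e^{\omega\, t}$, and $n_\omega\ge n_{\omega_0}$ keeps both Lemma \ref{lem:op-R-properties} and Lemma \ref{lem:Yosida-hat} in force. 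Taking the maximum yields the common bound \er{eq:bounds-on-semigroups}. The only genuine obstacle is the first step, namely recognising that one must invoke the full resolvent-powers estimate of Hille--Yosida rather than the single-power bound of Lemma \ref{lem:op-R-properties}; once that is in hand, the remainder is routine bookkeeping.
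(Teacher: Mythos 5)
Your proposal is correct and takes essentially the same route as the paper's own proof: the same factorization $e^{\op{A}_n^\dagger\, t} = e^{-n\, t}\, e^{n^2\, \op{R}_\op{A}(n)\, t}$ via Lemma \ref{lem:Yosida-hat}\emph{(i)}, the same invocation of the full Hille--Yosida resolvent-powers bound $\|\op{R}_\op{A}(n)^k\|_{\bo(\cX)} \le M\, (n-\omega_0)^{-k}$ (which is indeed the decisive step beyond Lemma \ref{lem:op-R-properties}), and the same absorption of the rate $n\,\omega_0/(n-\omega_0)$ into $\omega$. The only cosmetic differences are that the paper fixes $\omega = 2\,\omega_0$ exactly and bounds the rate by evaluating the decreasing function $r \mapsto r\,\omega_0/(r-\omega_0)$ at $r = 2\,\omega_0$, whereas you allow any $\omega \ge 2\,\omega_0$ and use the elementary estimate $n - \omega_0 > n/2$.
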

\begin{proof} The proof follows that provided in \cite{BDDM:07}. In particular, by Assumption \ref{ass:op-A}, $\op{A}$ is the generator of a $C_0$-semigroup of bounded linear operators, with elements $e^{\op{A}\, t}\in\bo(\cX)$ indexed by $t\in\R_{\ge 0}$. Hence, applying \cite[Theorem 2.2, p.4]{P:83} (and as per the proof of Lemma \ref{lem:op-R-properties}), there exists $M\in\R_{\ge 1}$ and $\omega_0\in\R_{\ge 0}$ such that
\begin{align}
	\| e^{\op{A}\, t} \|
	& \le M\, e^{\omega_0\, t}
	\qquad
	\forall \ t\in\R_{\ge 0}\,.
	\label{eq:b-on-s-1}
\end{align}
Define $\omega \doteq 2\, \omega_0\in\R_{\ge 0}$ and $n_\omega \doteq \lceil \omega \rceil +1$ as per the lemma statement (see also \cite[p.102]{BDDM:07}). Fix any $t\in\R_{\ge 0}$ and $n\in\N_{\ge n_\omega}$. Note in particular that $n\ge n_\omega \ge \max(\lceil \omega_0 \rceil + 1,\, \lceil 2\, \omega_0 \rceil)$. Hence, as $n\ge \lceil \omega_0 \rceil + 1$, assertion {\em (i)} of Lemma \ref{lem:Yosida-hat} implies that $\op{A}_n^\dagger\in\bo(\cX)$ is well-defined by \er{eq:Yosida-hat}. Subsequently, \cite[Theorem 1.2, p.2]{P:83} implies that $\op{A}_n^\dagger$ generates a uniformly continuous semigroup of bounded linear operators, each of which enjoys an explicit series representation. In particular, for the $t\in\R_{\ge 0}$ fixed above,
\begin{align}
	e^{\op{A}_n^\dagger\, t}
	& = \sum_{k=0}^\infty \frac{t^k}{k!} \, (\op{A}_n^\dagger)^k
	= e^{-n\, t} \sum_{k=0}^\infty \frac{(n^2\, t)^k}{k!} \, (\op{R}_\op{A}(n))^k\,,
	\label{eq:b-on-s-2}
\end{align}
where the second equality follows by assertion {\em (i)} of Lemma \ref{lem:Yosida-hat} (see \cite[equation (2.30), p.102]{BDDM:07}). The Hille-Yosida Theorem \cite[Theorem 5.3, p.20]{P:83} also implies that
\begin{align}
	\|\op{R}_\op{A}(n)^k\|_{\bo(\cX)}
	& \le \frac{M}{(n-\omega_0)^k}
	\label{eq:b-on-s-3}
\end{align}
for all $k\in\N$, where $M$ and $\omega_0$ are as per \er{eq:b-on-s-1}. Hence, returning to \er{eq:b-on-s-2},
\begin{align}
	& \|e^{\op{A}_n^\dagger\, t}\|_{\bo(\cX)}
	= e^{-n\, t} \left\| \sum_{k=0}^\infty \frac{(n^2\, t)^k}{k!} \, (\op{R}_\op{A}(n))^k \right\|_{\bo(\cX)}
	\le e^{-n\, t} \sum_{k=0}^\infty \frac{(n^2\, t)^k}{k!} \, \| \op{R}_\op{A}(n)^k\|_{\bo(\cX)}
	\nn\\
	& \le e^{-n\, t} \sum_{k=0}^\infty \frac{(n^2\, t)^k}{k!} \, \frac{M}{(n-\omega_0)^k}
	= M\, e^{-n\, t}\, e^{n^2\, t / (n-\omega_0)}
	= M\, e^{ n\, \omega_0\, t / (n - \omega_0) }\,.
	\label{eq:b-on-s-4}
\end{align}
Recalling the definition of $n\in\N_{\ge n_\omega}$, note that $n\ge 2\, \omega_0$. As $\alpha:[2\, \omega_0,\infty)\mapsinto\R$ defined by $\alpha(r) \doteq r\, \omega_0 / (r- \omega_0)$ for all $r\ge 2\, \omega_0$ is monotone decreasing,
\begin{align}
	\frac{n\, \omega_0}{n - \omega_0}
	& \le \alpha(2\, \omega_0) = \frac{2\, \omega_0^2}{2\, \omega_0 - \omega_0} = 2\, \omega_0 = \omega\,.
	\label{eq:b-on-s-5}
\end{align}
Consequently, 
\begin{align}
	\|e^{\op{A}\, t}\|_{\bo(\cX)}
	& \le M\, e^{\omega\, t}\,,
	\qquad
	\|e^{\op{A}_n^\dagger\, t}\|_{\bo(\cX)}
	\le M\, e^{\omega\, t}\,,
	\label{eq:b-on-s-6}
\end{align}
where the first inequality follows by inspection of \er{eq:b-on-s-1} and noting that $\omega_0 \le 2\, \omega_0 = \omega$, and the second inequality follows by substituting \er{eq:b-on-s-5} in \er{eq:b-on-s-4}. As $t\in\R_{\ge 0}$ and $n\in\N_{\ge n_\omega}$ are arbitrary, the proof is complete.
\end{proof}

The proof of Theorem \ref{thm:Yosida} is straightforward in view of Lemmas \ref{lem:Yosida-hat} and \ref{lem:bounds-on-semigroups}.

\begin{proof}[Theorem \ref{thm:Yosida}]
With operator $\op{A}$ fixed as per the theorem statement, let $n_{\omega_0}, \, n_\omega\in\N$ be defined respectively by Lemmas \ref{lem:Yosida-hat} and Lemma \ref{lem:bounds-on-semigroups}. Recalling the proof of the latter lemma, note in particular that $n_\omega\ge n_{\omega_0}$. Consequently, Lemma \ref{lem:Yosida-hat} implies that the operator $\op{A}_n^\dagger$ of \er{eq:Yosida-hat} is well-defined for all $n\in\N_{\ge n_\omega}$. It follows immediately that the operator
\begin{align}
	\op{A}_n
	& \doteq \op{A}_{n-n_\omega +1}^\dagger\,,
	&&
	\dom(\op{A}_n) = \cX
	\label{eq:Yosida}
\end{align}
is well defined for every $n\in\N$. Hence, assertion {\em (ii)} of Lemma \ref{lem:Yosida-hat} implies that assertion {\em (i)} of the hypothesis holds. Similarly, Lemma \ref{lem:bounds-on-semigroups} immediately implies that assertion {\em (ii)} of the hypothesis holds, thereby completing the proof.
\end{proof}

\begin{remark}
As indicated in the statement of Theorem \ref{thm:Yosida}, the operator $\op{A}_n$, $n\in\N$, of \er{eq:Yosida} is referred to throughout as a {\em Yosida approximation}. Elsewhere, for example \cite{P:83}, this terminology is reserved for the operator $\op{A}_n^\dagger$, $n\in\N_{\ge n_\omega}$, of \er{eq:Yosida-hat}.
\end{remark}



\section{Proof of Theorem \ref{thm:optilde-P-existence}}
\label{app:optilde-P-proof}
Fix any $\op{M}\in\Sigma(\cX)$ such that Assumption \ref{ass:coercive} holds, any $\optilde{M}\in\Sigma_\op{M}(\cX)$, and any $n\in\N$. Theorem \ref{thm:existence-uniqueness-P} implies that there exists a $\hat\tau_0\in\R_{>0}$ (independent of $n\in\N$), $\op{P}_n, \, \optilde{P}_n\in C([0,\hat\tau_0];\Sigma(\cX))$, and $\op{P}\in C_0([0,\hat\tau_0];\Sigma(\cX))$ such that $\op{P}_n$ and $\optilde{P}_n$ are unique solutions of \er{eq:op-P-n} satisfying $\op{P}_n(0) = \op{M}$ and $\optilde{P}_n(0) = \optilde{M}$ respectively, and $\op{P}$ is the unique mild solution of \er{eq:op-P} satisfying $\op{P}(0) = \op{M}$. Hence, $\optilde{E}_n\in C([0,\hat\tau_0];\Sigma(\cX))$ is well defined by
\begin{align}
	\optilde{E}_n(t)
	\doteq \optilde{P}_n(t) - \op{P}_n(t)
	\label{eq:op-E-n}
\end{align}
for all $t\in[0,\hat\tau_0]$, where it may be noted that $\optilde{E}_n(0) = \optilde{M} - \op{M}$, which is coercive by definition of $\optilde{M}\in\Sigma_\op{M}(\cX)$. Formally differentiating \er{eq:op-E-n} and applying \er{eq:op-P-n} twice (for $\optilde{P}_n$ and $\op{P}_n$) yields the evolution equation
\begin{align}
	\dot{\optilde{E}}_n(t)
	& = \optilde{L}_n(t)' \, \optilde{E}_n(t) + \optilde{E}_n(t)\, \optilde{L}_n(t)\,,
	\label{eq:evolution-E-tilde}
\end{align}
which holds for all $t\in[0,\hat\tau_0]$, with $\optilde{L}_n(t) \doteq \op{A}_n + \demi\, \sigma\, \sigma' \, ( \op{P}_n(t) + \optilde{P}_n(t) )$, with $\op{A}_n$ denoting the Yosida approximation of $\op{A}$. In order to define the notion of solution for \er{eq:evolution-E-tilde}, it is also useful to consider the related operator evolution equation
\begin{align}
	\dot{\optilde{Y}}_n(t)
	& = \optilde{L}_n(t)' \, \optilde{Y}_n(t)\,.
	\label{eq:evolution-Y-tilde}
\end{align}
A solution of \er{eq:evolution-Y-tilde} on a time interval $[0,T]$, $T\in[0,\hat\tau_1]$, is any operator-valued function $\optilde{Y}_n\in C([0,T];\Sigma(\cX))$ satisfying
\begin{align}
	\optilde{Y}_n(t)\, x
	& = e^{\op{A}_n'\, t}\, \optilde{Y}_n(0)\, x + 
			\demi\, \int_0^t e^{\op{A}_n'\, (t-s)} \, (\sigma\, \sigma' \, [ \op{P}_n(s) + \optilde{P}_n(s) ])' \, \optilde{Y}_n(s)\, x \, ds
	\label{eq:mild-Y-tilde}
\end{align}
for all $x\in\cX$, $t\in[0,T]$, where $e^{\op{A}_n'\, \cdot}$ denotes the uniformly continuous semigroup generated by the Yosida approximation $\op{A}_n'\in\bo(\cX)$ of $\op{A}'$.

\begin{claim}
\label{cla:existence-uniqueness-Y}
Given any $\op{Y}_0\in\Sigma(\cX)$, there exists a $\tau_1\in(0,\hat\tau_1]$ such that the operator evolution equation \er{eq:evolution-Y-tilde} exhibit a unique solution $\optilde{Y}_n\in C([0,\tau_1];\Sigma(\cX))$ satisfying $\optilde{Y}_n(0) = \op{Y}_0$ for all $n\in\N$.
\end{claim}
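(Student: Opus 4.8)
The plan is to establish Claim \ref{cla:existence-uniqueness-Y} by a contraction-mapping (Picard) argument applied to the integral form \er{eq:mild-Y-tilde}, closely paralleling the proof of Theorem \ref{thm:existence-uniqueness-P}. The decisive simplification here is that, unlike the Riccati recursion $\gamma_n$, the evolution equation \er{eq:evolution-Y-tilde} is \emph{linear} in the unknown $\optilde{Y}_n$, so the contraction estimate carries no dependence on the radius of the ball. First I would fix $T\in(0,\hat\tau_1]$ and, for each $n\in\N$, introduce the integral operator
\begin{align}
	\left[ \Lambda_n(\op{O})(t) \right] x
	& \doteq e^{\op{A}_n'\, t}\, \op{Y}_0\, x
	+ \demi \int_0^t e^{\op{A}_n'\, (t-s)} \left( \sigma\, \sigma' \, [ \op{P}_n(s) + \optilde{P}_n(s) ] \right)' \op{O}(s)\, x \, ds
	\label{eq:Lambda-n}
\end{align}
acting on $\op{O}\in C([0,\tau_1];\bo(\cX))$, so that a solution of \er{eq:mild-Y-tilde} is precisely a fixed point $\optilde{Y}_n=\Lambda_n(\optilde{Y}_n)$ with $\optilde{Y}_n(0)=\op{Y}_0$. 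The underlying space is complete by Lemma \ref{lem:Banach}.

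The key steps, in order, would be: (i) record uniform-in-$n$ bounds, namely $\sup_{n\in\N}\sup_{t\in[0,T]}\|e^{\op{A}_n'\, t}\|_{\bo(\cX)}\le M_T<\infty$ from \er{eq:M-T}, the common ball bound $\|\op{P}_n(s)+\optilde{P}_n(s)\|_{\bo(\cX)}\le 2r$ for all $s\in[0,T]$, $n\in\N$ furnished by Theorem \ref{thm:existence-uniqueness-P}, and $b\doteq\|\sigma\,\sigma'\|_{\bo(\cX)}$; (ii) show that $\Lambda_n$ maps the closed ball of radius $\rho\doteq 2\,M_T\,\|\op{Y}_0\|_{\bo(\cX)}$ into itself once $\tau_1$ is small, via the estimate $\|\Lambda_n(\op{O})(t)\|\le M_T\|\op{Y}_0\|+\tau_1\,M_T^2\,r\,b\,\rho$ and the choice $\tau_1\,M_T^2\,r\,b\le\demi$; (iii) show $\Lambda_n$ is a contraction, since \er{eq:Lambda-n} is linear the difference obeys $\|\Lambda_n(\op{O})-\Lambda_n(\op{Z}_n)\|_{C[0,\tau_1]}\le \tau_1\,M_T^2\,r\,b\,\|\op{O}-\op{Z}_n\|_{C[0,\tau_1]}<\demi\,\|\op{O}-\op{Z}_n\|_{C[0,\tau_1]}$ for $\tau_1<1/(2\,M_T^2\,r\,b)$; (iv) invoke the Banach Fixed Point Theorem (\cite[Theorem 5.1-4, p.303]{K:78}) to obtain a unique fixed point in the ball; and (v) upgrade local uniqueness to global uniqueness on $[0,\tau_1]$ exactly as in Theorem \ref{thm:existence-uniqueness-P}: any second solution makes $\|\optilde{Y}_n(t)-\op{Z}_n(t)\|$ satisfy a linear integral inequality whose kernel is bounded by $M_T^2\,r\,b$, so Gronwall's inequality forces $\optilde{Y}_n=\op{Z}_n$ on $[0,\tau_1]$.

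The main obstacle I anticipate is securing a single threshold $\tau_1$ that works \emph{simultaneously for every} $n\in\N$, as the claim demands. This reduces entirely to the uniformity in $n$ of the constants entering $\tau_1<1/(2\,M_T^2\,r\,b)$: the bound $M_T$ is uniform in $n$ by \er{eq:M-T}, and $r$ is uniform because the common ball radius supplied by Theorem \ref{thm:existence-uniqueness-P} depends only on $\op{A}$, the horizon $T$, the initial data, and $b$, not on $n$; confirming this $n$-independence (and that it transfers cleanly to the coefficient $\op{P}_n+\optilde{P}_n$ in \er{eq:evolution-Y-tilde}) is the crux of the argument. Because $\op{A}_n\in\bo(\cX)$, each $e^{\op{A}_n'\, \cdot}$ is uniformly continuous, so the whole argument may be run in the uniform operator norm of $C([0,\tau_1];\bo(\cX))$ without the strong-continuity complications that arise for the unbounded generator $\op{A}$.
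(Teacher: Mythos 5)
Your proposal is correct, and its skeleton (Picard iteration on the integral form \er{eq:mild-Y-tilde}, uniform-in-$n$ constants, Banach fixed point, Gronwall for global uniqueness) is the same as the paper's, which literally says the argument "largely follows that of the proof of Theorem \ref{thm:existence-uniqueness-P}." The one place where you genuinely diverge is exactly the step you flag as the crux: how to get a bound on $\|\op{P}_n\|_{C[0,\hat\tau_1]}$ and $\|\optilde{P}_n\|_{C[0,\hat\tau_1]}$ that is uniform in $n$. You extract it from the contraction construction itself, observing that the fixed point $\op{P}_n$ lies in the ball $B_{C[0,\tau]}(r)$ whose radius $r$ and horizon $\tau$ in \er{eq:tau} depend only on $M_T$, $\|\op{M}\|$ (resp.\ $\|\optilde{M}\|$), $b$ and $\|\op{C}\|$, never on $n$. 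That is valid, but note it is proof-internal information: the \emph{statement} of Theorem \ref{thm:existence-uniqueness-P} does not record the ball bound, so saying it is "furnished by Theorem \ref{thm:existence-uniqueness-P}" really means re-opening its proof. The paper instead derives the uniform bound from the stated conclusion \er{eq:op-limit}: strong convergence $\op{P}_n(\cdot)\,x\to\op{P}(\cdot)\,x$ in $C([0,t];\cX)$ makes each sequence $\{\|\op{P}_n(\cdot)\,x\|_{C([0,t];\cX)}\}_n$ bounded, whence the Uniform Boundedness Theorem (via the identification $C_0(I;\bo(\cX))\equiv\bo(\cX;C(I;\cX))$ of Lemma \ref{lem:spaces} and the norm equivalence of Lemma \ref{lem:norms}) yields $\sup_n\|\op{P}_n\|_{C[0,\hat\tau_1]}<\infty$. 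Your route is more elementary and gives an explicit constant; the paper's is more robust in that it uses only the theorem as stated (and would survive any reworking of that proof). Your additional observation that linearity of \er{eq:evolution-Y-tilde} removes the radius-dependence of the contraction constant (indeed one could contract on all of $C([0,\tau_1];\bo(\cX))$ and skip the self-map and Gronwall steps entirely) is a genuine simplification relative to the quadratic Riccati case, and your constants check out (your $M_T^2$ is a harmless overestimate, since the integrand in \er{eq:mild-Y-tilde} carries only one semigroup factor).
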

\begin{proof}[Claim \ref{cla:existence-uniqueness-Y}]
The argument largely follows that of the proof of Theorem \ref{thm:existence-uniqueness-P}, with the only significant departure being that it must be shown that $\|\op{P}_n\|_{C[0,\hat\tau_1]}$ (and $\|\optilde{P}_n\|_{C[0,\hat\tau_1]}$) can be bounded above uniformly with respect to $n\in\N$. To this end, recall that $\op{P}_n\in C([0,\hat\tau_1];\Sigma(\cX))\subset\bo(\cX;C([0,\hat\tau_1];\cX))$ by Lemma \ref{lem:spaces}. Furthermore, Theorem \ref{thm:existence-uniqueness-P} and the triangle inequality imply that
\begin{align}
	0 
	& = \lim_{n\rightarrow\infty} \left\| \op{P}_n(\cdot)\, x - \op{P}(\cdot)\, x \right\|_{C([0,t];\cX)}
	\ge \lim_{n\rightarrow\infty} \left| \left\| \op{P}_n(\cdot) \, x \right\|_{C([0,t];\cX)} - \left\| \op{P}(\cdot) \, x \right\|_{C([0,t];\cX)} \right|
	\nn
\end{align}
for all $x\in\cX$. That is, the sequence $\{ p_n \}_{n\in\N}\subset\R_{\ge 0}$, $p_n \doteq \left\| \op{P}_n(\cdot) \, x \right\|_{C([0,t];\cX)}$, is convergent, and hence bounded. In particular, for each $x\in\cX$, there exists an $M_x\in\R_{\ge 0}$ (independent of $n\in\N$) such that $\| \op{P}_n(\cdot) \, x \|_{C([0,t];\cX)} \le M_x < \infty$ for all $n\in\N$. Consequently, as $\op{P}_n(\cdot) = \op{P}_n\in\bo(\cX;C([0,\hat\tau_1];\cX))$ for all $n\in\N$,  the Uniform Boundedness Theorem (for example \cite[Theorem 4.7-3, p.249]{K:78}) implies that there exists an $M\in\R_{\ge 0}$ such that 
$\|\op{P}_n\|_{\bo(\cX;C([0,\hat\tau_1];\cX))} \le M < \infty$ for all $n\in\N$. Hence, Lemma \ref{lem:norms} implies that $\|\op{P}_n\|_{C[0,\hat\tau_1]}$ is indeed uniformly bounded as required, with
\begin{align}
	\sup_{n\in\N} \|\op{P}_n\|_{C[0,\hat\tau_1]}
	& \le M < \infty\,.
	\nn
\end{align}
With this bound in place (along with the corresponding uniform bound for $\|\optilde{P}_n\|_{C[0,\hat\tau_1]}$), the proof proceeds as per Theorem \ref{thm:existence-uniqueness-P}, with the details omitted for brevity.
\end{proof}

Existence of a unique solution $\optilde{Y}_n$ of \er{eq:evolution-Y-tilde} on $[0,\tau_1]$ as per Claim \ref{cla:existence-uniqueness-Y} implies the existence of an evolution operator that propagates $\optilde{Y}_n$ forward in time. In particular, it defines $\optilde{U}_n:\Delta_{\tau_1}\mapsinto\Sigma(\cX)$, $\Delta_{\tau_1} \doteq \left\{ (t,s)\in\R^2 \, \big| \, t\in[0,\tau_1], \ s\in[0,t] \right\}$, via
\begin{align}
	\optilde{U}_n(t,s)\, \optilde{Y}_n(s)\, x
	= \optilde{Y}_n(t)\, x\,,	
	\label{eq:op-U-n}
\end{align}
for all $x\in\cX$, $(t,s)\in\Delta_{\tau_1}$. This evolution operator $\optilde{U}_n$ satisfy a catalog of properties, see \cite[Theorem 5.2, p.128]{P:83} or \cite[Proposition 3.6, p.138]{BDDM:07}. A subset of these is summarized as follows:
\begin{enumerate}[(i)]
\item $\optilde{U}_n\in C(\Delta_{\tau_1};\bo(\cX))$;
\item $\optilde{U}_n(t,t) = \op{I}$, $\optilde{U}_n(t,s) = \optilde{U}_n(t,r)\, \optilde{U}_n(r,s)$ for all $(r,s),(s,t)\in\Delta_{\tau_1}$; and
\item $\optilde{U}_n$ is differentiable, with $\ts{\pdtone{}{s}} \, \optilde{U}_n(t,s) = -\optilde{U}_n(t,s)\, \optilde{L}_n(s)'$ for all $(t,s)\in\Delta_{\tau_1}$. 
\end{enumerate}
(Where $\optilde{L}_n(t) = \optilde{L}_n\in\bo(\cX)$ is independent of $t$, the resulting evolution operator $\optilde{U}_n(t,s) = \optilde{U}_n(t-s)$ defined by \er{eq:op-U-n} simplifies to the element $e^{(\optilde{L}_n)'\, (t-s)}\in\bo(\cX)$ of the uniformly continuous semigroup generated by $\optilde{L}_n'$.)

Evolution operator $\optilde{U}_n$ facilitates the definition of the notion of solution for the evolution equation \er{eq:evolution-E-tilde}. In particular, a  solution of \er{eq:evolution-E-tilde} on a time interval $[0,T]$, $T\in[0,\tau_1]$, is any operator-valued function $\optilde{E}_n\in C([0,T];\Sigma(\cX))$ satisfying
\begin{align}
	\optilde{E}_n(t)\, x
	& = \optilde{U}_n(t,0)\, \optilde{E}_n(0)\, \optilde{U}_n(t,0)'\, x
	\label{eq:mild-E-tilde}
\end{align}
for all $x\in\cX$, $t\in[0,T]$. (See also Appendix \ref{app:integral-forms}.) Recalling the definition \er{eq:op-E-n} of $\optilde{E}_n$, its required initialization $\optilde{E}_n(0) = \optilde{M} - \op{M}$ is coercive by definition of $\optilde{M}\in\Sigma_\op{M}(\cX)$. Furthermore, $\op{P}(t) - \op{M}$ is coercive for all $t\in(0,\tau_1]$ by Assumption \ref{ass:coercive}. That is, there exists  $\eps_0\in\R_{>0}$ and $\eps_1:[0,\tau_1]\mapsinto\R_{\ge 0}$, satisfying $\eps_1(0) = 0$ and $\eps_1(s)>0$ for all $s\in(0,\tau_1]$, such that
\begin{align}
	& \langle x,\, (\optilde{M} - \op{M})\, x \rangle 
	\ge \eps_0 \, \|x\|^2\,,
	\qquad
	\langle x,\, (\op{P}(t) - \op{M})\, x \rangle
	\ge \eps_1(t)\, \|x\|^2
	\label{eq:coercivity-requirements}
\end{align}
for all $x\in\cX$ and $t\in[0,\tau_1]$. Note in particular that $\eps_1$ is independent of $n\in\N$. Hence, recalling \er{eq:op-E-n} and \er{eq:mild-E-tilde},
\begin{align}
	\langle x,\, & (\optilde{P}_n(t) - \op{M})\, x \rangle
	= \langle x,\, \optilde{E}_n(t)\, x \rangle + \langle x,\, (\op{P}_n(t) - \op{P}(t))\, x \rangle +  \langle x,\, (\op{P}(t) - \op{M})\, x \rangle
	\nn\\
	& = \langle \optilde{U}_n(t,0)'\, x,\, (\optilde{M} - \op{M})\, \optilde{U}_n(t,0)'\, x \rangle 
					+ \langle x,\, (\op{P}_n(t) - \op{P}(t))\, x \rangle +  \langle x,\, (\op{P}(t) - \op{M})\, x \rangle
	\nn\\
	& \ge \eps_0\, \| \optilde{U}_n(t,0)'\, x\|^2 - \|x\|\, \sup_{s\in[0,t]} \|\op{P}_n(s)\, x - \op{P}\, x\| + \eps_1(t)\, \|x\|^2
	\nn\\
	& \ge \eps_1(t)\, \|x\|^2 - \|x\|\, \|\op{P}_n(\cdot)\, x - \op{P}\, x\|_{C([0,t];\cX)}\,,
	\nn
\end{align}
where the first inequality follows by \er{eq:coercivity-requirements} and Cauchy-Schwartz, and the second inequality follows by positivity of $\eps_0$. Hence, taking the limit as $n\rightarrow\infty$, the limit relationship \er{eq:op-limit} of Theorem \ref{thm:existence-uniqueness-P} implies that
\begin{align}
	\langle x,\, (\optilde{P}(t) - \op{M})\, x \rangle
	& \ge \eps_1(t) \, \|x\|^2
\end{align}
for all $x\in\cX$, $t\in[0,\tau_1]$, where $\eps_1(0) = 0$ and $\eps_1(t)>0$ for all $t\in(0,\tau_1]$. That is, $\optilde{P}\in C([0,\tau_1];\Sigma(\cX))\cap C((0,\tau_1];\Sigma_\op{M}(\cX))$, as required. $\square$



\section{Integral forms of operator differential equations}
\label{app:integral-forms}
The notion of a mild solution of an operator differential equation is defined with respect to the corresponding operator integral equation, see for example \er{eq:op-P-mild}, \er{eq:op-Q-mild}, \er{eq:op-R-mild}, \er{eq:mild-dynamics}, \er{eq:mild-Y-tilde}, \er{eq:mild-E-tilde}, etc. In each case, the integral equation is derived formally from the operator differential equation. Two example derivations are included here.

{\em (i) Operator differential Riccati equation \er{eq:op-P}.} 
For convenience, let $\Sigma^\op{A}(\cX) \doteq \{ \op{P}\in\Sigma(\cX)\, \bigl| \, \op{P}\, x\in\dom(\op{A}) \ \forall \ x\in\dom(\op{A})\}$. Given some horizon $t\in\R_{>0}$, let $\op{P}\in C_{0}^1([0,t];\Sigma^\op{A}(\cX))$ denote a (strict) solution of the operator differential Riccati equation \er{eq:op-P-mild}, where $C_0^1([0,t];\Sigma^\op{A}(\cX))$ denotes the corresponding space of strongly Frechet differentiable operator-valued functions defined on $[0,t]$. Define $\pi_t:[0,t]\mapsinto\bo(\cX)$ by $\pi_t(s)\, x \doteq e^{\op{A}'\, (t-s)}\, \op{P}(s)\, e^{\op{A}\, (t-s)}\, x$, $s\in[0,t]$, $x\in\cX$, and note that $\pi_t(0)\, x = e^{\op{A}'\, t}\, \op{P}(0) \, e^{\op{A}\, t}$, $\pi_t(t)\, x = \op{P}(t)\, x$, and $\pi(\cdot)\, x$ is {\Frechet} differentiable. In particular, recalling \cite[Theorem 2.4, p.4]{P:83}, the chain rule for {\Frechet} differentiation, and \er{eq:op-P},
\begin{align}
	\ts{\ddtone{}{s}} [\pi_t(s)\, x]
	& = e^{\op{A}'\, (t-s)} \left[ - \op{A}'\, \op{P}(s) + \opdot{P}(s) + \op{P}(s)\, \op{A} \right] e^{\op{A}\, (t-s)}\, x
	\nn\\
	& = e^{\op{A}'\, (t-s)} \left[ \op{P}(s) \, \sigma\, \sigma' \, \op{P}(s) + \op{C} \right] e^{\op{A}\, (t-s)}\, x\,.
	\nn
\end{align}
Hence, integration yields that
\begin{align}
	\op{P}(t)\, x 
	& = \pi_t(t)\, x = \pi_t(0)\, x + \int_0^t \ts{\ddtone{}{s}} [ \pi_t(s)\, x ] \, ds
	\nn\\
	& = e^{\op{A}'\, t}\, \op{P}(0) \, e^{\op{A}\, t}\, x 
			+ \int_0^t e^{\op{A}'\, s} \left[ \op{P}(s) \, \sigma\, \sigma' \, \op{P}(s) + \op{C} \right] e^{\op{A}\, (t-s)}\, x \, ds
	= \gamma(\op{P})(t)\, x\,,
	\nn
\end{align}
which is the integral form \er{eq:op-P-mild} of the operator differential Riccati equation \er{eq:op-P}.

{\em (ii) Operator evolution equation \er{eq:evolution-E-tilde}.} 
Given a time horizon $t\in\R_{>0}$, let $\optilde{E}_n\in C^1([0,t];\Sigma(\cX))$ denote a (strict) solution of the operator evolution equation \er{eq:evolution-E-tilde}, where $C^1([0,t];\Sigma(\cX))$ denotes the corresponding space of uniformly {\Frechet} differentiable operator-valued functions defined on $[0,t]$. Define $\pi_t:[0,t]\mapsinto\bo(\cX)$ by $\pi_t(s) \doteq \optilde{U}_n(t,s)\, \optilde{E}_n(s)\, \optilde{U}_n(t,s)'$ where the evolution operator $\optilde{U}_n$ is as per \er{eq:op-U-n}, and note that $\pi_t(0) = \optilde{U}_n(t,0)\, \optilde{E}_n(0)\, \optilde{U}_n(t,0)'$, $\pi_t(t) = \optilde{E}_n(t)$, and $\pi_t(\cdot)$ is {\Frechet} differentiable. In particular, recalling property {\em (iii)} of $\optilde{U}$ as set out in Appendix \ref{app:optilde-P-proof} (or \cite[Theorem 5.2, p.128]{P:83}), the chain rule for {\Frechet} differentiation, and \er{eq:evolution-E-tilde},
\begin{align}
	\ts{\ddtone{}{s}} [\pi_t(s)]
	& = \optilde{U}_n(t,s) \left[ -\optilde{L}_n(s)'\, \optilde{E}_n(s) + \dot{\optilde{E}}_n(s) 
										- \optilde{E}_n(s)\, \optilde{L}_n(s) \right] \optilde{U}_n(t,s)'
	= 0\,.
	\nn
\end{align}
Hence, integration yields that
\begin{align}
	\optilde{E}_n(t) = \pi_t(t)
	& = \pi_t(0) = \optilde{U}_n(t,0)\, \optilde{E}_n(0) \, \optilde{U}_n(t,0)'\,,
	\nn
\end{align}
which yields the integral form \er{eq:mild-E-tilde} of the operator evolution equation \er{eq:evolution-E-tilde}.


\section{Quadratic functionals}

\begin{lemma}
\label{lem:quadratic}
Given any $\op{F}\in\Sigma(\cX)$, the quadratic functional $f:\cX\mapsinto\R$ defined by $f(x) \doteq \demi\langle x,\, \op{F}\, x \rangle$, $x\in\cX$, satisfies the following properties: (i) $f$ is closed; and (ii) $f$ is convex if and only if $f$ is nonnegative.
\end{lemma}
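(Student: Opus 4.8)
The plan is to establish the two assertions independently, each reducing to a standard property of the self-adjoint bounded operator $\op{F}$. For assertion (i), I would first observe that boundedness of $\op{F}\in\bo(\cX)$ renders $f$ continuous: for any $x,y\in\cX$, the identity $\langle x,\op{F}\, x\rangle - \langle y,\op{F}\, y\rangle = \langle x-y,\, \op{F}\, x\rangle + \langle y,\, \op{F}\,(x-y)\rangle$ together with Cauchy--Schwarz gives $|f(x)-f(y)|\le\demi\, \|\op{F}\|_{\bo(\cX)}\,(\|x\|+\|y\|)\,\|x-y\|$. Continuity implies lower semicontinuity, so $f$ is closed, settling (i).

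For assertion (ii), the approach is to compare the two sides of the defining convexity inequality directly. Fixing $x,y\in\cX$ and $\lambda\in(0,1)$, I would expand $f(\lambda x+(1-\lambda)y)$ bilinearly and use self-adjointness of $\op{F}$ to merge the two cross terms via $\langle x,\op{F}\, y\rangle=\langle y,\op{F}\, x\rangle$. Subtracting the expansion from $\lambda f(x)+(1-\lambda)f(y)$ and collecting coefficients then yields the completed-square identity
\[
	\lambda\, f(x)+(1-\lambda)\, f(y) - f(\lambda x+(1-\lambda)y) = \demi\, \lambda\,(1-\lambda)\, \langle x-y,\, \op{F}\,(x-y)\rangle\,.
\]
Since $\lambda(1-\lambda)>0$ on $(0,1)$, this identity shows that $f$ is convex precisely when $\langle z,\op{F}\, z\rangle\ge0$ for every $z\in\cX$ (taking $z=x-y$, which ranges over all of $\cX$). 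The latter is exactly the statement that $f$ is nonnegative, which delivers both directions of the equivalence simultaneously.

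The computation itself is routine; the only point requiring attention is the use of self-adjointness to collapse the cross terms into $2\,\lambda(1-\lambda)\langle x,\op{F}\, y\rangle$, after which the coefficients $\lambda-\lambda^2$ and $(1-\lambda)-(1-\lambda)^2$ each factor as $\lambda(1-\lambda)$. I do not anticipate any genuine obstacle: the result is simply the Hilbert-space analogue of the finite-dimensional fact that a quadratic form is convex if and only if its defining operator is positive semidefinite.
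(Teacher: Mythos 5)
Your proposal is correct and follows essentially the same route as the paper: part (i) is established via boundedness of $\op{F}$ and Cauchy--Schwarz to get continuity (hence closedness), and part (ii) rests on exactly the convexity-defect identity the paper computes, namely $\alpha f(x) + (1-\alpha)f(\xi) - f(\alpha x + (1-\alpha)\xi) = \alpha(1-\alpha)\, f(x-\xi)$, from which both directions of the equivalence follow since $x-\xi$ ranges over all of $\cX$. The only cosmetic difference is that you invoke self-adjointness to merge the cross terms, whereas the paper's identity already follows from bilinearity alone; this does not affect correctness.
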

\begin{proof} 
{\em (i)} Fix any $x\in\cX$, $\delta\in(0,1]$, and any $h\in\cX$ such that $\|h-x\|\le\delta$.
\begin{align}
	\left| f(x+h) - f(x) \right|
	& \le \demi \left| \left\langle h, \, \left( \op{F} + \op{F}' \right) \,x \right\rangle \right| +
	\demi \left| \langle h,\, \op{F} \, h \rangle \right|
	\nn\\
	& \le \demi \|h\| \left( \left\| \left(\op{F} + \op{F}' \right) x \right\| + \| \op{F}\, h\| \right) 
	\le K \, \|h\|
	\nn
\end{align}
for some $K\in\R_{>0}$, where the inequalities follow respectively by the triangle inequality, Cauchy-Schwartz, and boundedness of $\op{F}$. Hence, $f$ is continuous everywhere on $\cX$, and hence closed.

{\em (ii)} Given $\alpha\in[0,1]$ and applying the definition of the quadratic functional $f$, define the functional $\Delta_{\alpha}:\cX\times\cX\mapsinto\R$ by
\begin{align}
	\Delta_{\alpha}(x,\xi)
	& \doteq \alpha f(x) + (1 - \alpha) f(\xi) - f\left(\alpha\, x + (1 - \alpha) \, \xi\right)
	\nn\\
	& = 
	\alpha (1 - \alpha) \left( \langle x,\, \op{F}\, x\rangle + \langle \xi,\, \op{F} \, \xi \rangle - 
	\langle x,\, \op{F} \, \xi \rangle -
	\langle \xi,\, \op{F}\, x \rangle \right)
	\nn\\
	& = 
	\alpha(1-\alpha) \, \langle x - \xi,\, \op{F} \left( x - \xi \right) \rangle 
	= \alpha (1 - \alpha) \, f(x- \xi)\,,
	\nn
\end{align}
where linearity of $\op{F}$ and properties of the inner product have been used.
Supposing that $f$ is nonnegative, $\Delta_{\alpha}(x,\xi) \ge 0$ for all $x,\,\xi\in\cX$. That is, $f$ is convex. Conversely, if $f$ is convex, then it follows by inspection that $f$ must be nonnegative.
\end{proof}

\begin{lemma}
\label{lem:pseudo}
Given any $\op{F}:\cX\mapsinto\cX$, $\xi\in\cX$, suppose that the quadratic functional $f:\cX\mapsinto\R$ defined by $f(x) \doteq \demi \langle x,\, \op{F}\, x \rangle + \langle x,\, \xi \rangle$, $x\in\cX$, satisfies the property that $\sup_{x\in\cX} f(x) < \infty$. Then, the following properties hold:
\begin{enumerate}[(i)]
\item $\op{F}$ is non-positive;
\item the Moore-Penrose inverse $\op{F}^+$ of $\op{F}$ exists; and 
\item there exists an $x^{*}\in\cX$ such that
\begin{align}
	f(x^{*}) & = \sup_{x\in\cX} f(x) = -\demi \langle \xi,\, \op{F}^{+}\, \xi \rangle\,,
	\quad
	\text{where} \quad
	x^{*} = -\op{F}^{+}\, \xi\,.
	\label{eq:pseudo-max}
\end{align} 
\end{enumerate}
\if{false}

\begin{align}
	\sup_{x\in\cX} f(x) < \infty
	& \quad\Longrightarrow\quad 
	\left\{ \begin{aligned}
	& \text{$\op{F}$ is non-positive}\,,
	\\
	& \text{$\op{F}^{+}$ exists}\,,
	\end{aligned}\right.
	\label{eq:pseudo-exists}
\end{align}
where $\op{F}^{+}$ denotes the Moore-Penrose pseudo-inverse of $\op{F}$. Furthermore, 

\fi
\end{lemma}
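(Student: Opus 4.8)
The plan is to exploit concavity of $f$ and to characterise its maximiser through a stationarity condition, from which both the pseudo-inverse representation and the optimal value follow. First I would establish assertion (i) by fixing an arbitrary $h\in\cX$ and restricting $f$ to the ray through $h$, writing $f(t\, h) = \demi\, t^2\, \langle h,\, \op{F}\, h\rangle + t\, \langle h,\, \xi \rangle$ for $t\in\R$. Were $\langle h,\, \op{F}\, h\rangle > 0$, this scalar quadratic would diverge to $+\infty$ as $t\to\infty$, contradicting $\sup_{x\in\cX} f(x) < \infty$; hence $\langle h,\, \op{F}\, h\rangle\le 0$ for every $h\in\cX$, which is (i). Because the form $\langle x,\, \op{F}\, x\rangle$ sees only the self-adjoint part $\demi(\op{F}+\op{F}')$, I take $\op{F}$ self-adjoint without loss, and applying Lemma~\ref{lem:quadratic}(ii) to $-\op{F}\ge 0$ shows the quadratic part of $f$ is concave, so $f$ itself is concave.

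For assertions (ii)--(iii) I would use the orthogonal decomposition $\cX = \ker(\op{F})\oplus\overline{\mathrm{range}(\op{F})}$ available for self-adjoint $\op{F}$. I first show $\xi\perp\ker(\op{F})$: if $\xi$ had a nonzero component $\xi_0\in\ker(\op{F})$, then $f(t\,\xi_0) = t\,\|\xi_0\|^2 \to +\infty$, again contradicting the hypothesis; thus $\xi\in\overline{\mathrm{range}(\op{F})}$. Invoking the standard generalised-inverse theory (e.g. \cite{BN:00}), the bounded Moore--Penrose pseudo-inverse $\op{F}^+$ exists once $\op{F}$ has closed range, with $\op{F}\,\op{F}^+$ the orthogonal projection onto $\overline{\mathrm{range}(\op{F})}$ and $\op{F}^+$ self-adjoint; this gives (ii). Setting $x^* \doteq -\op{F}^+\,\xi$ and using $\xi\in\mathrm{range}(\op{F})$ yields $\op{F}\, x^* = -\op{F}\,\op{F}^+\,\xi = -\xi$, so the \Gateaux{} derivative $h\mapsto \langle h,\, \op{F}\, x^* + \xi\rangle$ vanishes identically; concavity of $f$ then makes this stationarity condition sufficient for $x^*$ to be a global maximiser. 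The value is read off by substituting $\op{F}\, x^* = -\xi$: $f(x^*) = \demi\langle x^*,\, \op{F}\, x^*\rangle + \langle x^*,\, \xi\rangle = -\demi\langle x^*,\, \xi\rangle + \langle x^*,\, \xi\rangle = \demi\langle x^*,\, \xi\rangle = -\demi\langle \xi,\, \op{F}^+\,\xi\rangle$, using self-adjointness of $\op{F}^+$, which is \er{eq:pseudo-max}.

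The delicate point is the passage from $\xi\in\overline{\mathrm{range}(\op{F})}$ to $\xi\in\mathrm{range}(\op{F})$, equivalently from a finite supremum to an \emph{attained} one: in infinite dimensions $\sup_{x} f(x)<\infty$ by itself does not force the candidate $x^* = -\op{F}^+\xi$ to lie in $\cX$ (diagonal examples on $\ell^2$ have finite but unattained supremum), so the closed-range/attainment structure must be supplied separately. In every application in this paper the relevant $\op{F}$ is either coercive---hence boundedly invertible, so $\op{F}^+ = \op{F}^{-1}$ and the difficulty is vacuous---or otherwise has closed range, and it is this structural fact, rather than the stationarity and value computations which are routine, on which the lemma genuinely rests.
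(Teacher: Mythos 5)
Your part (i) is the same scaling argument the paper uses, but for (ii)--(iii) you take a genuinely different route. The paper factors $-\op{F} = \opsqrt{F}\,\opsqrt{F}$ through a non-negative self-adjoint square root, rules out a null-space component of $\xi$ by the ray argument you apply to $\ker(\op{F})$, writes $\xi = \opsqrt{F}\,y$, and completes the square, $f(x) = \demi\,\langle y,\,y\rangle - \demi\,\|\opsqrt{F}\,x - y\|^2$, so that attainment and the optimal value are read off at once, with $\op{F}^+ = -\opsqrt{F}^+\,\opsqrt{F}^+$ constructed explicitly from the pseudo-inverse of the square root. Your direct stationarity-plus-concavity computation with $\op{F}$ itself is equivalent and dispenses with the square-root machinery of \cite{BN:00}; what the paper's completion of squares buys is the explicit formula for $\op{F}^+$ and an optimal value that is visibly $\demi\|y\|^2$.

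The more important point is that the ``delicate point'' you isolate is exactly the step at which the paper's own proof is unsound as written. To obtain $\xi = \opsqrt{F}\,y$ and a bounded pseudo-inverse, the paper asserts that $\cR(\opsqrt{F})$ is closed ``as $\opsqrt{F}$ is self-adjoint,'' citing \cite[Theorem 4.10.1]{TL:80}. Self-adjointness yields only $\cN(\opsqrt{F})^\perp = \overline{\cR(\opsqrt{F})}$, not closedness of the range, and your diagonal example is decisive: on $\ell^2$ with $\op{F} = -\mathrm{diag}(1/n)$ and $\xi = (n^{-3/2})_n$ one has $\sup_{x} f(x) = \demi \sum_n n^{-2} < \infty$, yet the would-be maximizer $(n^{-1/2})_n$ lies outside $\ell^2$, the range of $\op{F}$ is not closed, and $\xi$ is not in the domain of the (now unbounded, densely defined) pseudo-inverse, so assertions (ii) and (iii) both fail as stated. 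Hence the finite-supremum hypothesis alone cannot deliver attainment; the closed-range or coercivity structure you call out must be supplied by the applications, as indeed it is throughout the paper, where the operators being inverted (e.g. $\op{P}(t)-\op{M}$, $\opMtilde-\op{M}$) are coercive. In short, your proposal is not missing a step relative to the paper --- it makes explicit a hypothesis that the paper's proof silently, and incorrectly, claims comes for free.
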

\begin{proof}
Assume that $\sup_{x\in\cX} f(x) < \infty$. Fix $\xi\in\cX$.

{\em (i)} Suppose that $\op{F}$ is positive. Given $\eps\in\R_{>0}$, there exists an $\bar{x}\in\cX$ such that $\phi_\xi(\bar{x}) > \eps$, where $\phi_\xi:\cX\mapsinto\R$ is defined by
\begin{align}
	\phi_\xi(\bar{x})
	& \doteq \left\{ \ba{cl}
		2 \min\{ \demi \langle \bar x,\, \op{F} \, \bar x \rangle,\, \langle \bar x,\, \xi\rangle \}\,,
		& \xi\ne 0\,,
		\\[1mm]
		\demi \langle \bar x,\, \op{F} \, \bar x \rangle\,,
		& \xi = 0\,.
	\ea\right.
	\nn
\end{align}
With $k\in\R_{>1}$,
$
	f(k\,\bar x)
	= \textstyle{\frac{k^{2}}{2}} \langle \bar x,\, \op{F}\, \bar x \rangle + k \langle \bar x,\, \xi \rangle
	\ge k\, [\demi\, \langle \bar x,\, \op{F}\, \bar x \rangle + \langle \bar x,\, \xi \rangle]
	\ge k \, \phi_\xi(\bar{x})
	> k\, \eps
$.
Hence, $\sup_{x\in\cX} f(x) \ge \sup_{k>1} f(k\, \bar x) \ge \eps\, \sup_{k>1} k = \infty$, which is a contradiction. That is, $\op{F}$ must be non-positive. 

{\em (ii)}, {\em (iii)} As $-\op{F}$ is a non-negative, self-adjoint, bounded linear operator, a square-root operator $\opsqrt{F}$ exists \cite{BN:00} such that $-\op{F} = \opsqrt{F}'\, \opsqrt{F} = \opsqrt{F}\, \opsqrt{F}$, where $\opsqrt{F}$ is also non-negative, self-adjoint, bounded and linear. Hence, $\langle x,\, \op{F}\, x\rangle = - \langle x,\, \opsqrt{F}'\, \opsqrt{F}\, x\rangle = - \langle \opsqrt{F}\, x,\, \opsqrt{F}\, x \rangle$, and
\begin{align}
	f(x)
	& = -\demi \langle \opsqrt{F}\, x,\, \opsqrt{F}\, x \rangle + \langle x,\, \xi\rangle\,.
	\label{eq:quad-sqrt-form}
\end{align}
Let $\cN(\opsqrt{F})$ and $\cR(\opsqrt{F})$ denote the null and range spaces of $\opsqrt{F}$ respectively. As $\opsqrt{F}$ is self-adjoint, $\cNcom(\opsqrt{F}) = \cR(\opsqrt{F})$ and $\cR(\opsqrt{F})$ is closed (c.f. \cite[Theorem 4.10.1]{TL:80}). Furthermore, $\cX = \cN(\opsqrt{F})\, \hat\oplus \, \cNcom(\opsqrt{F})$, where $\hat\oplus$ denotes the direct sum (c.f. \cite[Theorem 2.7.4]{TL:80}). In particular, with $\xi\in\cX$ as per the lemma statement, 
\begin{align}
	\xi
	& = \xi^{N} + \xi^{R}\,,
	\quad\text{where}\quad
	\xi^{N}\in\cN(\opsqrt{F})\,, \ \xi^{R}\in\cR(\opsqrt{F})\,, \
	\langle \xi^{N},\, \xi^{R} \rangle = 0\,.
	\nn
\end{align}
Suppose $\xi^{N}\ne 0$. Recalling \er{eq:quad-sqrt-form},
$
	f(k\, \xi^N)
	= k \langle \xi^{N},\, \xi^{N} + \xi^{R} \rangle 
			- \textstyle{\frac{k^{2}}{2}} \langle \opsqrt{F}\, \xi^{N},\, \opsqrt{F}\, \xi^{N} \rangle
	= k \| \xi^{N} \|^{2}
$
for any $k\in\R$. In particular, $\sup_{x\in\cX} f(x) \ge \| \xi^{N} \|^{2}\sup_{k\in\R_{>0}} k = \infty$, which is a contradiction. That is, $\xi^{N} = 0$, so that $\xi\in\cR(\opsqrt{F})$. Hence, there exists a $y\in\cX$ such that $\xi = \opsqrt{F}\, y$. Again recalling \er{eq:quad-sqrt-form}, completion of squares yields
\begin{align}
	f(x)
	= \langle x,\, \opsqrt{F}\, y \rangle - \demi\langle \opsqrt{F}\, x,\, \opsqrt{F}\, x \rangle 
	& = \demi\langle y,\, y \rangle - \demi \| \opsqrt{F}\, x - y \|^{2}.
	\label{eq:quad-squares}
\end{align}
As $\cR(\opsqrt{F})$ is closed, $\opsqrt{F}$ has a pseudo inverse \cite{HRS:01}, denoted by $\opsqrt{F}^{+}$. Consequently, by inspection of \er{eq:quad-squares}, the supremum over $\cX$ in \er{eq:pseudo-max} is attained at
$x^{*} = \opsqrt{F}^{+}\, y = \opsqrt{F}^{+} \opsqrt{F}^{+}\, \xi$. As $\op{F}$ is self-adjoint, and $-\op{F} = \opsqrt{F}\, \opsqrt{F}$, it follows that the pseudo-inverse of $\op{F}$ also exists and is given by $\op{F}^{+} = -\opsqrt{F}^{+}\, \opsqrt{F}^{+}$. That is, the maximizer may be rewritten as $x^{*} = -\op{F}^{+}\, \xi$, as per \er{eq:pseudo-max}. Substituting in \er{eq:quad-squares} yields
\begin{align}
	f(x^{*})
	& = \demi\langle y,\, y \rangle 
	= \demi\langle \opsqrt{F}^{+}\, \xi,\, \opsqrt{F}^{+}\, \xi\rangle
	= \demi\langle \xi,\, \opsqrt{F}^{+}\, \opsqrt{F}^{+}\, \xi \rangle
	= -\demi \langle \xi,\, \op{F}^{+}\, \xi \rangle\,,
	\nn
\end{align}
as per \er{eq:pseudo-max}.
\end{proof}


\section{Max-plus integral operators}

\begin{lemma}
\label{lem:op-O-finite}
Consider any max-plus integral operator $\op{O}^\oplus$ of the form \er{eq:op-B} with
\begin{align}
	\op{O}^\oplus\, a
	=
	\left( \op{O}^\oplus \, a \right)(\cdot)
	& \doteq 
	\int_{\cX}^{\oplus} O(\cdot,z) \otimes a(z) \, dz\,,
	\label{eq:op-O}
\end{align}
defined with respect to kernel functional $O:\cX\times\cX\mapsinto\R^-$ and functionals $a:\cX\mapsinto\R^-$. Suppose there exists a functional $\widehat a:\cX\mapsinto\R$ such that $\widehat a, (\op{O}^\oplus\, \widehat a):\cX\mapsinto\R^-$ are finite-valued everywhere on $\cX$. 
Then, the kernel functional $O$ is also finite-valued everywhere on $\cX\times\cX$. That is, $O(y,z) < \infty$ for all $y,z\in\cX$.
\end{lemma}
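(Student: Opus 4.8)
The plan is to read the conclusion straight off the defining property of the max-plus integral, which is simply a supremum, together with the elementary fact that a supremum dominates each of its arguments. Recalling that $\int_{\cX}^{\oplus} f(\zeta)\, d\zeta = \sup_{\zeta\in\cX} f(\zeta)$ and that $\otimes$ is ordinary addition, the action of $\op{O}^{\oplus}$ on $\widehat a$ is, for each $y\in\cX$,
\begin{align}
	(\op{O}^{\oplus}\, \widehat a)(y)
	& = \int_{\cX}^{\oplus} O(y,\zeta) \otimes \widehat a(\zeta)\, d\zeta
	= \sup_{\zeta\in\cX} \left[ O(y,\zeta) + \widehat a(\zeta) \right].
	\nn
\end{align}

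First I would fix an arbitrary pair $y,z\in\cX$ and restrict the supremum above to the single index $\zeta = z$. This yields the pointwise lower bound
\begin{align}
	(\op{O}^{\oplus}\, \widehat a)(y)
	& \ge O(y,z) + \widehat a(z).
	\nn
\end{align}
Since $\widehat a:\cX\mapsinto\R$ by hypothesis, the number $\widehat a(z)$ is a finite real, so it may be transposed across the inequality without producing an indeterminate form, giving $O(y,z) \le (\op{O}^{\oplus}\, \widehat a)(y) - \widehat a(z)$. The hypothesis that $\op{O}^{\oplus}\, \widehat a$ is finite-valued everywhere guarantees $(\op{O}^{\oplus}\, \widehat a)(y)\in\R$, so the right-hand side is a difference of two reals and is therefore finite. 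Hence $O(y,z)<\infty$, and as $y,z\in\cX$ were arbitrary, $O$ is finite-valued everywhere on $\cX\times\cX$, as required.

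There is no substantive obstacle in this argument: the entire content is the order-theoretic fact that a supremum bounds each of its terms from above, so I would not expect to grind through any estimates. The single point that warrants care is the transposition of $\widehat a(z)$, which is legitimate precisely because the \emph{test} functional $\widehat a$ is assumed to take values in $\R$ rather than merely in $\R^{-}$, so that $\widehat a(z)\neq-\infty$ and no $\infty-\infty$ ambiguity arises. I would also stress that the argument delivers only the upper bound $O(y,z)<\infty$ and provides no lower bound; the value $O(y,z)=-\infty$ remains admissible, consistent with the stated codomain $\R^{-}$, and indeed this upper bound is all that is needed in the applications (for instance, to rule out a $+\infty$ value of the kernel $B_{\tau,t}$ arising from an unbounded supremum in Corollary \ref{cor:propagate-fund}).
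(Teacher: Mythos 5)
Your proof is correct and follows essentially the same route as the paper's: both fix $y,z\in\cX$, use the fact that the max-plus integral (a supremum) dominates the single term at $\zeta=z$, and transpose the finite value $\widehat a(z)$ to obtain $O(y,z) \le (\op{O}^\oplus\, \widehat a)(y) - \widehat a(z) < \infty$. Your version merely spells out the intermediate inequality and the $\infty-\infty$ caveat that the paper's one-line proof leaves implicit.
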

\begin{proof}
Fix any $y,z\in\cX$. From \er{eq:op-O}, $O(y,z) \le (\op{O}^\oplus\, a)(y) - \widehat a(z) < \infty$ by finiteness of $(\op{O}^\oplus\, \widehat a)(y)$ and $\widehat a(z)$.
\end{proof}

\end{document}